\titleformat*{\subsection}{\bfseries}
\newcommand{\Ex}{\mathbb{E}}
\renewcommand{\Pr}{\mathbb{P}}
\newcommand{\cond}{{ \; \big\vert \; }}
\newcommand{\prob}[1]{{\Pr\bigl[{#1}\bigr]}}
\newcommand{\avg}[1]{{\Ex\bigl[{#1}\bigr]}}
\renewcommand{\bar}{\overline}
\newtheorem{theorem}{Theorem} 
\newtheorem{claim}{Claim}
\newtheorem{lemma}[theorem]{Lemma}
\newtheorem{conjecture}[theorem]{Conjecture}
\newtheorem{corollary}[theorem]{Corollary}
\newtheorem{proposition}[theorem]{Proposition}
\newcommand{\cB}{\mathcal{B}}
\newcommand{\cC}{\mathcal{C}}
\newcommand{\cE}{\mathcal{E}}
\newcommand{\cR}{\mathcal{R}}
\newcommand{\cS}{\mathcal{S}}
\newcommand{\eps}{\varepsilon}
\def\mathyp{{\hbox{-}}}
\def\constSh{{560}}
\def\expShLLL{{3307 \over 15996}}
\def\constShRB{{1512}}
\title{\vspace{-0.8cm} Properly colored and rainbow copies of graphs with few cherries\thanks{Research supported by SNSF grant 200021-149111.}}
\author{
Benny Sudakov\thanks{Department of Mathematics, ETH, 8092 Zurich. Email: {\tt benjamin.sudakov@math.ethz.ch}}
\and
Jan Volec\thanks{Department of Mathematics, ETH, 8092 Zurich. Email: {\tt jan@ucw.cz}}
}
\date{}
\begin{document}
\maketitle
\begin{abstract}
Let $G$ be an $n$-vertex graph that contains linearly many cherries (i.e., paths on
$3$ vertices), and let $c$ be a coloring of the edges of the complete graph $K_n$ 
such that at each vertex every color appears only constantly many times.
In 1979, Shearer conjectured that such a coloring $c$ must
contain a properly colored copy of $G$.
We establish this conjecture in a strong form, showing that it holds even for graphs $G$ with $O(n^{4/3})$
cherries and moreover this bound on the number of cherries is best possible up to a constant factor.
We also prove that one can find a rainbow copy of such $G$ in every edge-coloring of $K_n$ in which
all colors appear bounded number of times.

Our proofs combine a framework of Lu and Sz\'ekely for using the lopsided Lov\'asz local lemma
in the space of random bijections together with some additional ideas.
\end{abstract}

\section{Introduction}
\label{sec:intro}

The canonical version of Ramsey's theorem~\cite{bib:erdosrado} for graphs implies that for every
graph $G$, there exists an integer $n$ such that any coloring of the edges of the
complete graph $K_n$ contains at least one of the following copies of $G$:
\begin{itemize}
\item \emph{a monochromatic} copy, i.e., a copy where all the edges have the same color,
\item \emph{a rainbow} copy, which is a copy where no two edges have the same color,~or
\item \emph{a lexicographic} copy, in which case the vertices of the copy can be ordered
in such a~way that the color of any edge is purely determined by the smaller endpoint.
\end{itemize}
Note that by restricting the number of colors that the coloring of $E(K_n)$
can use to $k$, the theorem guarantees a monochromatic copy of $K_\ell$
for any fixed $\ell > k$, which implies the classical Ramsey's theorem.

In this paper we consider the following two different types of restrictions, which are kind of dual
to bounding the number of colors: we do not allow any color to, either locally or globally,
appear too many times. More precisely, we say that a coloring $c$ of $E(K_n)$ is \emph{locally $k$-bounded}
if for every vertex $v \in V(K_n)$, no color appears more than $k$-times on the
edges incident to $v$.  Analogously, we say that $c$ is \emph{globally $k$-bounded} if 
no color appears more than $k$-times on all the edges of $K_n$.
We define that a coloring $c$ of $E(K_n)$ is \emph{$G$-proper}, 
if there exists a copy of $G$ in $K_n$ for which $c$ induces a 
\emph{proper edge-coloring}, i.e., a coloring where no two incident edges have
the same color. Similarly, we say that $c$ is \emph{$G$-rainbow}
if there exists a copy of $G$ in $K_n$ such that no two edges of this copy
have the same color in $c$. Given a graph $G$, we would like to obtain sufficient conditions
on an edge-coloring of $K_n$ which yield either a properly colored or a rainbow
copy of this graph.  This problem was studied extensively by various researchers
in the last forty years.

\subsection{\normalsize Locally bounded colorings and properly colored subgraphs}
\label{sec:intro-local}
A conjecture of Bollob\'as and Erd\H{o}s~\cite{bib:bolloberdos} from 1976 states that every
locally $(n/2)$-bounded coloring of $E(K_n)$ is $C_n$-proper, i.e., it contains
a properly colored Hamilton cycle. In~\cite{bib:bolloberdos}, they proved
a weaker result -- any locally $\alpha n$-bounded coloring is $C_n$-proper, where the constant $\alpha$ equals to $1/69$.
Around the same time, Chen and Daykin~\cite{bib:chendayk} showed that already
$\alpha=1/17$ is enough. Then in 1979, Shearer~\cite{bib:shearer} improved the value of $\alpha$ to $1/7$.
After another improvement due to Alon and Gutin~\cite{bib:alongutin}, 
Lo~\cite{bib:lo} proved the conjecture of Bollob\'as and Erd\H{o}s asymptotically. He showed that locally $\alpha n$-bounded colorings are 
$C_n$-proper for any
$\alpha<1/2$ and sufficiently large $n$.

Thirty five years ago, Shearer ~\cite{bib:shearer} proposed the following generalization of the
conjecture above to an arbitrary graph $G$ that does not contain too many \emph{cherries}, i.e.,
paths on three vertices.
\begin{conjecture}
\label{conj:shearer}
For every two integers $s$ and $k$, there exists an integer $n_0$ such that the following is true.
If $n\ge n_0$ and $G$ is an $n$-vertex graph with at most $sn$ cherries, then any
locally $k$-bounded coloring of $E(K_n)$ is $G$-proper.
\end{conjecture}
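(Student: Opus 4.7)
The plan is to apply the Lu–Székely version of the lopsided Lov\'asz local lemma in the probability space of uniformly random bijections $\pi\colon V(G)\to V(K_n)$. For each cherry $uvw$ of $G$, I introduce the bad event
\[
A_{uvw}=\bigl\{\pi\,:\,c(\pi(u)\pi(v))=c(\pi(v)\pi(w))\bigr\};
\]
a bijection avoiding every $A_{uvw}$ induces a properly colored copy of $G$ in $K_n$, so it suffices to show that with positive probability no bad event occurs.

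First I would bound $\prob{A_{uvw}}$. Conditioning on $\pi(v)=y$, the pair $(\pi(u),\pi(w))$ is a uniformly random ordered pair of distinct vertices of $V(K_n)\setminus\{y\}$, so
\[
\prob{A_{uvw}\cond\pi(v)=y}=\frac{\sum_{c}d_c(y)(d_c(y)-1)}{(n-1)(n-2)},
\]
where $d_c(y)$ counts edges of color $c$ at $y$. Local $k$-boundedness gives $d_c(y)\le k$ together with $\sum_c d_c(y)=n-1$, so the numerator is at most $(k-1)(n-1)$ and therefore $\prob{A_{uvw}}\le(k-1)/(n-2)=O(k/n)$.

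The second ingredient is the negative dependency graph given by the Lu–Székely framework: each $A_C$ is a canonical event on a uniform random bijection (determined by the values of $\pi$ on the three vertices of $C$), and disjoint canonical events satisfy the lopsided LLL condition. Hence an edge is needed in the dependency graph between $A_C$ and $A_{C'}$ only when $V(C)\cap V(C')\neq\emptyset$. The number of cherries through a fixed vertex $z$ of $G$ equals $\binom{d(z)}{2}+\sum_{y\sim z}(d(y)-1)$, and summing over $z$ yields three times the total cherry count, so the \emph{average} local degree in the dependency graph is only $O(s)$.

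Finally I would apply the lopsided LLL. The main obstacle is that although the cherry count is linear, a single vertex of $G$ may have degree as large as $\Theta(\sqrt{sn})$ and thus lie in $\Theta(n)$ cherries; for such cherries a uniform weight $x_C=\Theta(k/n)$ is already too small to absorb the neighborhood in the dependency graph. I would instead choose nonuniform weights $x_C$ depending on the degrees at the three vertices of $C$, inflating $x_C$ on cherries through high-degree vertices so that their larger dependency neighborhoods are compensated, while keeping $x_C$ small on generic cherries. Verifying the LLL inequality $\prob{A_C}\le x_C\prod_{C'\sim C}(1-x_{C'})$ uniformly — in particular for cherries that touch an atypically high-degree vertex, where many of the neighboring $x_{C'}$ are themselves inflated — is the technical heart of the proof. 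Exploiting the scarcity of high-degree vertices (only $O(s)$ of them can have degree $\Omega(\sqrt{sn})$, since each contributes $\Omega(sn)$ cherries) should make this feasible; once the inequality is checked, the local lemma produces a bijection $\pi$ with no bad event, i.e., a properly colored copy of $G$.
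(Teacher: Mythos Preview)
Your approach has a fundamental obstruction that nonuniform weights cannot overcome. If $u\in V(G)$ has degree $d$, the $\binom{d}{2}$ cherries centered at $u$ are pairwise adjacent in the dependency graph, so the LLL inequality for any one of them, restricted just to the factors coming from the others, requires $k/n \le x(1-x)^{d^2/2}$ for some weight $x$. Since $\max_x x(1-x)^m = \Theta(1/m)$, this forces $d^2 = O(n/k)$; but $G$ may have a vertex of degree close to $\sqrt{2sn}$, so the condition fails whenever $sk$ exceeds an absolute constant, which the conjecture certainly allows. A clique of size $m$ in the dependency graph imposes the hard constraint $p = O(1/m)$ on the common event probability regardless of how the weights are chosen, so inflating $x_C$ on cherries through $u$ cannot help. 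There is also a more local error: your events $A_C$ are not canonical in the Lu--Sz\'ekely sense, each being a union over all monochromatic image triples of canonical events, and once one decomposes correctly the negative dependency graph must also record \emph{image} intersections, not only the domain condition $V(C)\cap V(C')\neq\emptyset$; these image-side neighbors contribute comparably and have to be counted as well.

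The paper's resolution is a two-stage embedding. A first-moment argument first produces a properly colored clique $H\subseteq K_n$ of size $2r^{1/4}$ with the additional feature that every two vertices of $H$ have only $O(kr^{1/4})$ common monochromatic neighbors outside $H$. The $2r^{1/4}$ vertices of $G$ of largest degree are mapped \emph{deterministically} onto $V(H)$; every remaining vertex of $G$ then has degree $O(r^{3/8})$, and only these are embedded via a random bijection. The bad events are taken to be the canonical ones, split into five classes according to which of the three cherry vertices were pre-embedded, and the asymmetric local lemma goes through after a case analysis bounding $\sum_{B'\sim B}\Pr[B']$ in each class. This deterministic pre-embedding of the high-degree vertices into a tailored clique is the key idea your proposal is missing.
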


We establish this conjecture in a strong form, showing that it holds even for graphs $G$ with $O(n^{4/3})$
cherries.
\begin{theorem}
\label{thm:shearer}
If $G$ is an $n$-vertex graph with at most $r$ cherries, then any
locally $\left({\frac{n}{\constSh r^{3/4}}}\right)$-bounded coloring $c$ of $E(K_n)$ is
$G$-proper.
\end{theorem}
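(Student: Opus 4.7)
The plan is to apply the Lov\'asz local lemma in the probability space of uniformly random bijections $\pi : V(G) \to V(K_n)$, following the framework of Lu and Sz\'ekely. Viewing $\pi$ as a random embedding of $G$ into $K_n$, define for every cherry $P=uvw$ of $G$ the bad event
$B_P := \{c(\pi(u)\pi(v)) = c(\pi(v)\pi(w))\}$.
The embedding is $G$-proper when no $B_P$ occurs, so it suffices to show $\prob{\bigcap_P \bar B_P}>0$. Conditioning on $\pi(v)=x$ and summing over color classes at $x$, the number of ordered pairs $(\pi(u),\pi(w))$ forcing $B_P$ is at most $(k-1)(n-1)$ since $c$ is locally $k$-bounded with $k = n/(\constSh\,r^{3/4})$; dividing by the $(n-1)(n-2)$ admissible pairs yields $\prob{B_P} \le (k-1)/(n-2) \le 1/(\constSh\,r^{3/4})$ for large $n$.

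In the Lu--Sz\'ekely lopsided setup every $B_P$ is a union of atomic events that specify $\pi$ on $V(P)=\{u,v,w\}$, and two such events are canonically negatively correlated whenever the underlying vertex sets in $V(G)$ are disjoint. Consequently, the relevant dependency graph has an edge between cherries $P,P'$ only if they share a vertex of $G$. Writing $t(v)$ for the number of cherries of $G$ through $v$, the dependency degree of $P$ is at most $\sum_{x\in V(P)} t(x)$, and $\sum_{v\in V(G)} t(v) = 3r$.

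The main obstacle is that a single vertex can lie in as many as $\Omega(r)$ cherries (consider $G \supseteq K_{1,\sqrt{2r}}$), so the symmetric form $ep(d+1)\le 1$ of the LLL cannot be enforced with a uniform degree bound $d = O(r^{3/4})$. I would instead invoke the asymmetric Lov\'asz local lemma with non-uniform weights
$x_P = \alpha\,(\constSh\,r^{3/4})^{-1}\,\phi(t(u),t(v),t(w))$
for a suitable increasing function $\phi$. Grouping the cherries $P'$ dependent on $P$ according to which of $u,v,w$ they share, one can estimate $\prod_{P'\cap P\neq \emptyset}(1-x_{P'})$ in terms of $t(u),t(v),t(w)$ via the budget $\sum_v t(v) \le 3r$; the LLL inequality $\prob{B_P}\le x_P\prod_{P'\cap P\neq\emptyset}(1-x_{P'})$ then reduces to a three-variable inequality in $t(u),t(v),t(w)$ that must hold uniformly. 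The exponent $3/4$ is forced by balancing the individual probability $\prob{B_P}\sim r^{-3/4}$ against the linear budget $\sum_v t(v)\le 3r$ via AM--GM or Cauchy--Schwarz, and the constant $\constSh$ comes from optimising the choice of $\alpha$ and $\phi$. Once this inequality is verified for all triples, the lopsided LLL yields $\prob{\bigcap_P \bar B_P}>0$ and hence a $G$-proper embedding. The hard part, and the place where the \emph{additional ideas} beyond a plain symmetric application of the local lemma are needed, is precisely the design of the weight function $\phi$ to absorb the heavy-vertex case while still giving the sharp exponent.
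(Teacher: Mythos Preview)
Your proposal has a genuine gap: no choice of weights $x_P=\alpha\,\phi(t(u),t(v),t(w))$ can make the asymmetric LLL succeed when $G$ contains a vertex of degree $\Theta(\sqrt r)$. Take a star $K_{1,m}$ with $m\sim\sqrt{2r}$ inside $G$; its centre $v$ lies in $\binom m2\sim r$ cherries, and every such cherry $P$ is dependent on all the others through $v$. The LLL condition $\prob{B_P}\le x_P\prod_{P'\ni v}(1-x_{P'})$ with $\prob{B_P}\sim r^{-3/4}$ and $\sim r$ factors in the product forces $\sum_{P'\ni v}x_{P'}=O(1)$, hence $x_{P'}=O(1/r)$ for typical $P'$; but then $x_{P'}<\prob{B_{P'}}$, which is already impossible. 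The ``budget'' $\sum_v t(v)=3r$ is a red herring here: what matters is the \emph{pointwise} value $t(v)$, and that can be as large as $r$.

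The paper's proof resolves exactly this obstruction by a two-phase embedding rather than a clever weight function. First, Lemma~\ref{lem:step1} locates in $K_n$ a properly coloured clique $P$ of size $\ell=2r^{1/4}$ with the additional property that any two of its vertices have monochromatic co-degree at most $5kr^{1/4}$ into the complement $Q$. The $\ell$ highest-degree vertices of $G$ (the set $L$) are mapped deterministically into $P$; since $|L|\cdot\binom{\Delta_S}{2}\le r$ forces $\Delta_S\le 2r^{3/8}$ on the remaining vertices $S$, the random bijection $f_2:S\to Q$ now faces only cherries whose ``free'' vertices have degree $O(r^{3/8})$. The LLL is then applied not to your union events $B_P$ but to the atomic canonical events $B^{[v_1v_2v_3]}_{u_1\mathyp u_2\mathyp u_3}$ (one for each cherry \emph{and} each specific monochromatic target triple), for which the Lu--Sz\'ekely negative dependency graph is available. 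These events are sorted into five classes according to which of $u_1,u_2,u_3$ lie in $L$, and a case analysis (Claims~\ref{cl1u13}--\ref{cl5v2}) bounds, for each fixed $u\in S$ or $v\in Q$, the total probability of $\cS$-intersecting events. Note also that your dependency graph is too sparse: canonical events $\cS$-intersect through the \emph{image} side as well, so two vertex-disjoint cherries of $G$ can still produce dependent atomic events.
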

This result is tight up to a constant factor. In Section~\ref{sec:lbounds}, we will construct locally $3$-bounded colorings $c_n$ of $E(K_n)$ together
with $n$-vertex trees $T_n$ with $\Theta(n^{4/3})$ cherries so that $c_n$ is not $T_n$-proper.

Another generalization of the conjecture of Bollob\'as and Erd\H{o}s to a general
graph $G$ takes into account the maximum degree.
Alon, Jiang, Miller and Pritikin~\cite{bib:alonjiang} showed that if 
$G$ is an $n$-vertex graph with maximum degree $\Delta$ and $k=O\left(\frac{\sqrt n}{\Delta^{27/2}}\right)$,
then any locally $k$-bounded coloring $c$ of $E(K_n)$ is $G$-proper.
Their result was greatly improved by B\"ottcher, Kohayakawa and Procacci~\cite{bib:bkp} who showed
that $k$ can be of order $n/\Delta^2$.
\begin{theorem}
\label{thm:bkp:1}
If $G$ is an $n$-vertex graph with maximum degree $\Delta$,
then any locally $\left({{n}/{22.4\Delta^2}}\right)$-bounded
coloring $c$ of $E(K_n)$ is $G$-proper.
\end{theorem}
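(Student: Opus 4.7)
The plan is to apply the lopsided Lov\'asz Local Lemma in the Lu--Sz\'ekely framework on random bijections. Let $\sigma\colon V(G)\to V(K_n)$ be uniformly random; this gives a random labelled copy of $G$ inside $K_n$, and the goal is to show that with positive probability the coloring $c$ restricted to $\sigma(G)$ is proper. Since a coloring of $G$ is proper exactly when no cherry has both its edges receiving the same color, for every cherry $P=uvw$ of $G$ (with centre $v$) we introduce the bad event
\[
 A_P \;=\; \bigl\{\, c(\sigma(u)\sigma(v)) = c(\sigma(v)\sigma(w)) \,\bigr\},
\]
and it suffices to prove $\Pr\bigl[\bigcap_P \overline{A_P}\bigr]>0$.

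The first quantitative step is to bound $\Pr[A_P]$. Fixing $\sigma(v)=x$, the local $k$-boundedness of $c$ at $x$ means the $n-1$ edges at $x$ split into color classes of sizes $m_1(x),\ldots,m_t(x)$ with each $m_i(x)\le k$, so the number of ordered pairs of distinct neighbours $y\ne z$ of $x$ with $c(xy)=c(xz)$ is $\sum_i m_i(x)(m_i(x)-1)\le (k-1)(n-1)$. Averaging over the uniform choice of $\sigma(v)$ then yields $\Pr[A_P]\le (k-1)/(n-2)$. Next, each $A_P$ depends on $\sigma$ only through its values on $V(P)=\{u,v,w\}$, so in the Lu--Sz\'ekely lopsided dependency graph $A_P$ is adjacent only to those $A_{P'}$ with $V(P)\cap V(P')\ne\emptyset$. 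Since each vertex of $G$ is the centre of at most $\binom{\Delta}{2}$ cherries and an endpoint of at most $\Delta(\Delta-1)$ cherries, the number of cherries meeting $V(P)$ is at most $\tfrac{9}{2}\Delta(\Delta-1)$, giving a dependency degree $d=O(\Delta^2)$. Feeding $p\le (k-1)/(n-2)$ and $d=O(\Delta^2)$ into the lopsided LLL already yields a bound of the form $k\le n/(C\Delta^2)$ for some absolute constant $C$.

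The main obstacle is pushing this through with the precise constant $22.4$ rather than merely some unspecified $O(1)$. This requires, firstly, invoking the Bissacot--Fernandez--Procacci--Scoppola cluster-expansion strengthening of the local lemma, which replaces the factor $e$ in the symmetric criterion $ep(d+1)\le 1$ by a smaller sum over independent subsets of neighbours that one controls through the same cherry-counting, and secondly, separating cherries that share the centre $v$ of $P$ from those that share only an endpoint, since these contribute with different weights to the lopsidependency estimate. Once those refined bounds are in place, the LLL guarantees a bijection $\sigma$ simultaneously avoiding every $A_P$, and such a $\sigma$ is by construction a $G$-proper embedding of $G$ inside the coloring~$c$.
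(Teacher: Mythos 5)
Your general plan (random bijection, lopsided LLL for permutations, dependency degree $O(\Delta^2)$) is the right high-level strategy, and it is indeed the strategy used by B\"ottcher, Kohayakawa and Procacci, whose Theorem this is (the paper merely cites it; the proofs actually written out in the paper are for Theorems~\ref{thm:shearer} and~\ref{thm:rbshearer}). However, your proof as written contains a genuine gap in the dependency-graph step. You claim that, because $A_P$ is determined by $\sigma$ restricted to the three vertices of the cherry $P$, it is non-adjacent in the lopsided dependency graph to every $A_{P'}$ with $V(P)\cap V(P')=\emptyset$. This is false in the random-bijection (permutation) space: two events living on disjoint domains are not mutually independent, nor even negatively dependent in the sense needed for a lopsided dependency graph. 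A minimal counterexample: $n=2$, $A=\{\sigma(1)=1\}$, $B=\{\sigma(2)=1\}$; these have disjoint domains but $\Pr[A\mid\overline{B}]=1>\Pr[A]$. The Lu--Sz\'ekely theorem that you want (Theorem~\ref{thm:luszekely} in the paper) applies only to \emph{canonical} events of the form $\bigwedge_i\{\sigma(u_i)=v_i\}$, and the resulting negative dependency graph joins two canonical events whenever their domains \emph{or} their images intersect. Your $A_P$ is a union of many canonical events, one per monochromatic triple in $K_n$, and that union is not canonical.

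To repair the argument you must do what the paper does in its proofs of Theorems~\ref{thm:shearer} and~\ref{thm:rbshearer}: define a bad event $B^{[v_1v_2v_3]}_{u_1\mathyp u_2\mathyp u_3}$ for each pair (cherry in $G$, $c$-monochromatic triple in $K_n$), and bound the sum $\sum_{B'}\Pr[B']$ over canonical events $B'$ that $\cS$-intersect a fixed $B$. That sum has two parts and you estimated only one of them. The domain side corresponds to cherries sharing a vertex with $P$ and gives your $O(\Delta^2)$. The image side corresponds to monochromatic triples in $K_n$ sharing a vertex with $[v_1v_2v_3]$; for a fixed $v$ there are up to order $nk$ such triples, and each can arise from up to all the cherries of $G$, so this is not obviously dominated by the domain side and must be estimated separately (in the paper this is exactly what the $t^{v}_i$ bounds track). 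Once the events are correctly set up as canonical and both sides of the $\cS$-intersection are controlled, the asymmetric lopsided LLL does close the argument; the cluster-expansion (Bissacot--Fernandez--Procacci--Scoppola) strengthening you mention is indeed what B\"ottcher--Kohayakawa--Procacci used to obtain the explicit constant $22.4$, but you would need to actually carry out that calculation, and it must be applied to the canonical-event dependency graph, not the (incorrect) domain-intersection graph you describe.
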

Can one further improve this bound?  Our next contribution shows that up to a constant factor,
this result is tight for all values $n$ and $\Delta$.  Moreover, one can find graphs $G$
with maximum degree $\Delta$ and locally $(3.9n/\Delta^2)$-bounded but not $G$-proper colorings,
of $K_n$, where the number of vertices of $G$ does not depend on $n$ at all.
\begin{proposition}
\label{prop:constr1}
For every prime power $q$ and integer $n$, 
there exist an $\ell$-vertex graph $G$ with maximum degree $\Delta$, where $\ell=q^2+q+1$ and $\Delta=q+1$,
and a locally $(3.9n/\Delta^2)$-bounded coloring $c$ of $E(K_n)$ so that $c$ is not $G$-proper.
\end{proposition}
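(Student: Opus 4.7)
The numerology $\ell = q^2+q+1$, $\Delta = q+1$ matches exactly that of the projective plane $\Pi := PG(2,q)$ of order $q$, which has $m := q^2+q+1$ points and $m$ lines, every line containing $\Delta$ points, every point lying on $\Delta$ lines, and every two distinct points lying on a unique common line. My plan is to build both $c$ and $G$ from $\Pi$.

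For the coloring, I would partition $V(K_n)$ into $m$ parts $\{V_p\}_{p \in P}$ indexed by the points $P$ of $\Pi$, with sizes $\lfloor n/m \rfloor$ or $\lceil n/m \rceil$, and refine each $V_p$ into $\Delta$ equally-sized \emph{sub-parts} $V_p^1,\ldots,V_p^\Delta$, whose indices I identify with the $\Delta$ lines through $p$. For $p \ne p'$, let $L(p,p')$ be the unique line through $p$ and $p'$; I color every edge between $V_p^a$ and $V_{p'}^b$ by a color $c_{L(p,p'),a,b}$, while edges inside a single part receive fresh private colors. A fixed color $c_{L,a,b}$ can occur at $v \in V_r^{a}$ only when $r \in L$, and then only on edges to the $\Delta - 1$ sub-parts $V_{r'}^{b}$ with $r' \in L \setminus \{r\}$; its multiplicity at $v$ is therefore at most $(\Delta-1)\lceil n/(m\Delta)\rceil$. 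A short arithmetic check, reducing to $(q+1)q \le 3.9(q^2+q+1)$, shows $(\Delta-1)\lceil n/(m\Delta)\rceil \le 3.9 n/\Delta^2$ for every $q \ge 1$ and all sufficiently large $n$, establishing the required local boundedness.

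For $G$, I would take $V(G) := P$ and define edges using a polarity $\sigma$ of $\Pi$, which exists for every prime power $q$: declare $p \sim p'$ iff $p \ne p'$ and $p' \in \sigma(p)$. After minor surgery at the (few) absolute points of $\sigma$, this yields a $\Delta$-regular graph on $\ell$ vertices whose adjacency structure is entangled with the line structure controlling $c$.

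Finally, I would prove that $c$ is not $G$-proper. Suppose for contradiction that $\phi : V(G) \hookrightarrow V(K_n)$ is a properly colored copy, and write $\phi(v) \in V_{\pi_1(v)}^{\pi_2(v)}$. The proper-coloring condition at $v$ translates to: for any two external neighbors $u, u'$ of $v$ (those with $\pi_1(u), \pi_1(u') \ne \pi_1(v)$), either $L(\pi_1(v),\pi_1(u)) \ne L(\pi_1(v),\pi_1(u'))$ or $\pi_2(u) \ne \pi_2(u')$. Since $v$ has $\Delta$ external neighbors and only $\Delta$ lines pass through $\pi_1(v)$, the ``distinct-line'' alternative can accommodate at most one neighbor per line through $\pi_1(v)$, forcing the sub-part labels of neighbors on the same line through $\pi_1(v)$ to be pairwise distinct. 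Propagating this constraint along the polarity-induced edges of $G$ and using injectivity of $\phi$, I expect to force two neighbors of some vertex into the same sub-part lying on a shared line through the central part --- the desired contradiction.

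The main obstacle is precisely this global propagation: the per-vertex constraint is locally mild, so one must genuinely exploit the rigidity of the polarity structure of $G$ together with the sub-part labelling of $c$ to certify that no consistent $(\pi_1, \pi_2)$ can exist. Verifying the precise constant $3.9$, by contrast, is a routine arithmetic check.
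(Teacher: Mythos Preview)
Your construction is far more elaborate than the paper's, and the hard part you flag as ``the main obstacle'' is exactly where your argument stops being a proof. The paper bypasses this entirely.

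The paper's coloring does \emph{not} use the projective plane at all. It simply partitions $V(K_n)$ into $\ell/3$ equal parts and colors each edge by the unordered pair of parts it spans; this is locally $(3n/\ell)$-bounded. Then a one-line pigeonhole argument (Lemma~\ref{lem:diam2prop}) shows that \emph{no} $\ell$-vertex graph of diameter two can be properly embedded: some part receives at least three vertices of $G$, and whether those three form a triangle or have a missing edge, one immediately gets a monochromatic cherry. The polarity graph $G_q$ enters only as an example of an $\ell$-vertex diameter-two graph with $\Delta = q+1 \le \sqrt{1.3\ell}$, which converts the $3n/\ell$ bound into $3.9n/\Delta^2$. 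No surgery at absolute points is needed, since only $\Delta(G_q)=q+1$ and $\mathrm{diam}(G_q)=2$ are used.

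By contrast, your coloring refines each part into $\Delta$ sub-parts and uses colors $c_{L,a,b}$ indexed by lines and sub-part labels. This introduces vastly more colors than the paper's coloring, making it strictly \emph{easier} to find a properly colored copy of $G$ --- so you have set yourself a harder task. Your per-vertex analysis is correct but, as you note, far too weak on its own: each vertex has $\Delta$ neighbors, $\Delta$ available lines through its part, and $\Delta$ sub-part labels on each line, so there is abundant local slack. You offer no mechanism for the ``global propagation'' beyond expectation, and I do not see one; indeed, since your coloring strictly refines (a variant of) the paper's and the paper's coloring is already the natural extremal object here, it is plausible that your richer coloring \emph{does} admit proper copies of $G_q$. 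The gap is real, and the fix is to abandon the sub-part refinement and use the crude $\ell/3$-part coloring together with the diameter-two pigeonhole.
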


\subsection{\normalsize Globally bounded colorings and rainbow subgraphs}
\label{sec:intro-global}
There is a rich literature studying rainbow copies of a fixed graph
in globally bounded colorings of $E(K_n)$,
see for example~\cite{bib:afr,bib:aghh,bib:friezekriv,bib:galvin,bib:hahnthomassen,bib:harveyvondrak,bib:hellmb,bib:lrw}.
In this work, we will focus on finding rainbow spanning subgraphs.

Various authors have considered an analogue of the Bollob\'as-Erd\H{o}s conjecture,
where the aim is to find a rainbow Hamilton cycle in a globally bounded coloring of $E(K_n)$.
Specifically, in 1986 Hahn and Thomassen~\cite{bib:hahnthomassen} conjectured that 
there is a constant $\alpha > 0$ such that any globally $\alpha n$-bounded coloring of
$K_n$ is $C_n$-rainbow. Their conjecture was proven by Albert, Frieze, and
Reed~\cite{bib:afr} with $\alpha=1/64$ (see also~\cite{bib:rue} for a correction of
the originally claimed constant).

In 2008, Frieze and Krivelevich~\cite{bib:friezekriv} showed that there is some
absolute constant $\alpha > 0$ so that any globally $\alpha n$-bounded coloring actually
contains copies of $C_k$ for all $k \in \{3,\dots,n\}$. In the same paper, they conjectured
that there is also a constant $\alpha > 0$ such that every globally $\alpha n$-bounded coloring
contains any spanning tree with bounded maximum degree.
Using the same technique as for proving Theorem~\ref{thm:bkp:1},
B\"ottcher, Kohayakawa and Procacci~\cite{bib:bkp}
proved the conjecture of Frieze and Krivelevich not only for trees, but actually for all spanning
subgraphs with bounded maximum degree.
\begin{theorem}[\cite{bib:bkp}]
\label{thm:bkp:2}
If $G$ is an $n$-vertex graph with maximum degree $\Delta$,
then
any globally $\left({{n}/{51\Delta^2}}\right)$-bounded
coloring $c$ of $E(K_n)$ is $G$-rainbow.
Furthermore, if $n\ge100$,
then any globally $\left({{n}/{42\Delta^2}}\right)$-bounded
coloring $c$ of $E(K_n)$ is $G$-rainbow.
\end{theorem}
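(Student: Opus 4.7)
The plan is to proceed via the Lu--Sz\'ekely lopsided local lemma in the space of uniform random bijections $\sigma\colon V(G)\to V(K_n)$, along the same lines as the proof of Theorem~\ref{thm:bkp:1}, now enlarging the family of bad events so that color collisions between vertex-disjoint edges of $\sigma(G)$ are also controlled. I would sample $\sigma$ uniformly and regard the induced spanning copy $\sigma(G)\subseteq K_n$ as the candidate rainbow embedding. For each unordered pair of distinct edges $\{e,f\}\subseteq E(G)$, introduce the bad event $A_{e,f}=\{c(\sigma(e))=c(\sigma(f))\}$; if no such event holds then $\sigma(G)$ is rainbow.

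Writing $m_i$ for the number of edges of color $i$ and setting $k=\lfloor n/(51\Delta^2)\rfloor$, global $k$-boundedness gives $m_i\le k$ and $\sum_i m_i=\binom{n}{2}$, hence $\sum_i\binom{m_i}{2}\le(k-1)\binom{n}{2}/2$. Splitting the bad events according to whether $e,f$ are adjacent or vertex-disjoint, counting yields
\[\Pr[A_{e,f}^{\mathrm{adj}}]\le\frac{k-1}{2(n-2)}\approx\frac{k}{2n},\qquad\Pr[A_{e,f}^{\mathrm{dis}}]\le\frac{2(k-1)}{(n-2)(n-3)}\approx\frac{2k}{n^2}.\]
By Lu--Sz\'ekely's negative dependency for random bijections, $A_{e,f}$ and $A_{e',f'}$ are lopsidependent only if $(V(e)\cup V(f))\cap(V(e')\cup V(f'))\ne\emptyset$. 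Since $\Delta(G)\le\Delta$, the at most four vertices of $V(e)\cup V(f)$ are incident to at most $4\Delta$ edges of $G$, so each bad event is lopsidependent with at most $O(\Delta^2)$ adjacent events and at most $2n\Delta^2$ disjoint ones. I would then apply the asymmetric LLL with weights $a=\Theta(k/n)$ on the adjacent events and $b=\Theta(k/n^2)$ on the disjoint ones. The two inequalities
\[p_{\mathrm{adj}}\le a(1-a)^{O(\Delta^2)}(1-b)^{O(n\Delta^2)},\qquad p_{\mathrm{dis}}\le b(1-a)^{O(\Delta^2)}(1-b)^{O(n\Delta^2)}\]
reduce to $k/(2n)\lesssim a$ and $2k/n^2\lesssim b$ since both product factors are bounded away from $0$ (indeed $a\Delta^2,\,bn\Delta^2=\Theta(k\Delta^2/n)=\Theta(1)$), and a direct numerical check confirms that $k\le n/(51\Delta^2)$ suffices. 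This produces a rainbow embedding of $G$ with positive probability.

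The main obstacle is not conceptual but numerical: sharpening the constant from $51$ to $42$ under the additional hypothesis $n\ge 100$. This will require (i) replacing the asymptotic approximations by explicit rational inequalities such as $(n-2)(n-3)\ge\alpha\,n^2$ valid for $n\ge 100$; (ii) enumerating the adjacent and disjoint dependency neighborhoods more precisely than the generic $O(\Delta^2)$ and $2n\Delta^2$, accounting for the fact that adjacent bad events contribute to the dependency only through $O(\Delta)$ rather than $\Delta^2$ directions at each of the shared vertices; and (iii) jointly optimizing the weights $(a,b)$ in the asymmetric LLL. The hypothesis $n\ge 100$ serves to absorb lower-order terms into the sharper constant. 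The framework is unchanged; the technical work sits entirely in this quantitative optimization.
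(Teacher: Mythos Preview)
This theorem is not proved in the present paper: it is quoted from B\"ottcher--Kohayakawa--Procacci~\cite{bib:bkp} and used as a black box (e.g.\ to dispose of the case $r\le 15$ in the proof of Theorem~\ref{thm:rbshearer}). So there is no in-paper proof to compare against.

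That said, your sketch has a real gap in its appeal to Lu--Sz\'ekely. Theorem~\ref{thm:luszekely} applies to \emph{canonical} events $\Omega(X',Y',\tau)$, and the associated negative dependency graph joins two such events whenever their source sets \emph{or} their target sets meet. Your events $A_{e,f}=\{c(\sigma(e))=c(\sigma(f))\}$ are not canonical; each is a large union of canonical events indexed by the possible monochromatic images of $(e,f)$ in $K_n$. Your dependency criterion ``$(V(e)\cup V(f))\cap(V(e')\cup V(f'))\ne\emptyset$'' records only source-side intersections and ignores the target-side ones, so Lu--Sz\'ekely does not give the negative dependency you assert. Concretely, two source-disjoint pairs $(e,f)$ and $(e',f')$ can compete for the same monochromatic edge pair in $K_n$, and conditioning on $\overline{A_{e',f'}}$ need not lower $\Pr[A_{e,f}]$.

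The fix is exactly what the paper does in the proofs of Theorems~\ref{thm:shearer} and~\ref{thm:rbshearer}: index the bad events by both the source pair in $G$ \emph{and} the target monochromatic configuration in $K_n$, i.e.\ work with events of the form $B^{[v_1v_2v_3v_4]}_{(u_1u_2)(u_3u_4)}$. These are canonical, Theorem~\ref{thm:luszekely} applies, and the dependency count must then include, for each fixed target vertex $v$, the number of events touching $v$. This changes your bookkeeping substantially (you must bound quantities like $t^v_i$ as well as $t^u_i$), though the final arithmetic still closes with a constant of this order.
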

With a slight modification of the construction from
Proposition~\ref{prop:constr1}, we can show that the dependency
$k=O(n/\Delta^2)$ in Theorem~\ref{thm:bkp:2} is again best possible.
\begin{proposition}
\label{prop:constr2}
For every two integers $\Delta$ and $n$ such that $\Delta$ is even and $\left(\frac\Delta2+1\right)^2$ divides $n$, 
there exist an $n$-vertex graph $G$ with maximum degree $\Delta$
and a globally $(16n/\Delta^2)$-bounded coloring $c$ of $E(K_n)$ so that $c$ is not $G$-rainbow.
\end{proposition}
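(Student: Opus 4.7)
The plan is to adapt the construction of Proposition~\ref{prop:constr1} to the global-bound setting by taking many disjoint copies of the base graph and refining its class-pair coloring. Set $m = \Delta/2 + 1$, so $\Delta = 2(m-1)$, and write $n = m^2 t$ with $t = n/m^2$ a positive integer. Take $G$ to be the vertex-disjoint union of $t$ copies of the rook graph $R_m$, that is, the graph on $[m]^2$ whose edges join pairs of vertices agreeing in exactly one coordinate. Each copy is $2(m-1)$-regular on $m^2$ vertices, so $G$ has $n$ vertices and maximum degree $\Delta$. The key feature of $R_m$ used below is that it has diameter two, so any two of its vertices share a common neighbour.

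For the coloring, identify $V(K_n) = [m]^2 \times [t]$, and for each edge $e = \{(i_1, j_1, r_1), (i_2, j_2, r_2)\}$ set
$c(e) = (\{(i_1, j_1), (i_2, j_2)\},\, h(r_1, r_2))$,
where $h\colon [t]^2 \to [T]$ is a symmetric hashing function with level sets of size at most $\lceil t^2/T \rceil$, taking $T = \lceil t(m-1)^2/(4m^2) \rceil$. One concrete choice is $h(r_1, r_2) = (r_1 + r_2) \bmod T$. A direct calculation shows that each color class has at most $\lceil t^2/T \rceil \le 16n/\Delta^2$ edges, so $c$ is globally $(16n/\Delta^2)$-bounded.

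To show $c$ is not $G$-rainbow, fix any embedding $\phi$ of $G$ into $V(K_n)$. For each unordered pair $\{p, q\}$ of positions in $[m]^2$, let $N(\{p, q\})$ be the number of rook-edges of $G$ whose two endpoints are mapped by $\phi$ to positions $p$ and $q$. One has $\sum_{\{p, q\}} N(\{p, q\}) = |E(G)| = t m^2(m-1)$, and there are exactly $m^2(m-1)$ position pairs that differ in exactly one coordinate (the \emph{valid} pairs). In the generic case where each rook copy injects position-wise, all rook-edges lie over valid pairs and averaging produces some valid $\{p, q\}$ with $N(\{p, q\}) \ge t > T$; since these rook-edges already agree in the first color coordinate, a pigeonhole on the $h$-values forces two of them to share the full color. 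In the alternative case, a rook copy places two of its vertices $u_1, u_2$ at the same $[m]^2$-position $p$; the diameter-two property yields a common neighbour $w$, and the two rook-edges $u_1 w$ and $u_2 w$ agree in their first color coordinate, after which a further pigeonhole on the $r$-values completes the argument.

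The main obstacle is to carry both cases through uniformly for all admissible $t$: the generic-position pigeonhole requires $t \ge 2$, while the sole remaining corner case $t = 1$ (i.e.\ $n = m^2$ and $G = R_m$) is handled separately by directly colouring $K_{m^2}$ with fewer than $m^2(m-1)$ color classes, for instance (when $m$ is a prime power) using the $(m+1)$ parallel classes of the affine plane $AG(2, m)$, so that every rook subgraph is forced to be non-rainbow by pigeonhole on its $m^2(m-1)$ edges.
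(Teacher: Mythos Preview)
Your construction of $G$ matches the paper's, but the coloring and the non-rainbow argument both break down.

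The central gap is in the ``generic case''. You claim that if each rook copy injects position-wise then all rook edges lie over \emph{valid} position pairs. This is false: the rook structure on the copies of $R_m$ inside $G$ has nothing to do with the $[m]^2$-factor of $V(K_n)=[m]^2\times[t]$. An arbitrary embedding $\phi$ that happens to use each position once per copy can still send a rook edge to any pair of distinct positions. Once that claim is dropped, the averaging must be taken over all $\binom{m^2}{2}$ position pairs, which only guarantees some pair with $N(\{p,q\})\ge 2t/(m+1)$; for large $m$ this is far below $T\approx t/4$, so no collision is forced. The ``alternative case'' has the same problem in miniature: the two edges $u_1w$ and $u_2w$ share the first color coordinate, but with your hash $h(r,r')=(r+r')\bmod T$ their $h$-values are $h(r_1,r_w)$ and $h(r_2,r_w)$, which need not coincide, and two edges are not enough for any pigeonhole. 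Finally, your $t=1$ fallback assumes $m$ is a prime power, which the hypothesis does not give.

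The paper avoids all of this by using a very different coloring. It designates a small set $X$ of $4\ell$ vertices (where $\ell=n/m^2$), colors edges inside $X$ lexicographically, colors edges from $X$ to its complement by the outside endpoint, and lets the rest be rainbow. The extra structural feature of $H_m$ it exploits, beyond diameter two, is that no neighborhood contains an independent set of size $3$; with this one shows directly that any rainbow copy of $H_m$ meets $X$ in at most three vertices. Pigeonhole on the $\ell$ copies of $H_m$ versus $|X|=4\ell$ then finishes. The point is that the obstruction is concentrated on a tiny vertex set rather than spread across a product coloring, so the argument never has to track where the edges of an arbitrary embedding land.
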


Finally, one can naturally ask what can be said about rainbow copies of graphs with few cherries
in globally bounded edge-colorings of $K_n$.
We were able to answer this question as well, proving the following analog
of Conjecture~\ref{conj:shearer} in this setting.
\begin{theorem}
\label{thm:rbshearer}
If $G$ is an $n$-vertex graph with at most $r$ cherries, then any
globally $\left({\frac{n}{\constShRB r^{3/4}}}\right)$-bounded coloring $c$ of $E(K_n)$
is $G$-rainbow.
\end{theorem}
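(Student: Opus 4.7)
The plan is to mimic the proof of Theorem~\ref{thm:shearer} by applying the lopsided Lov\'asz local lemma, in the Lu--Sz\'ekely framework, to a uniformly random bijection $\pi\colon V(G)\to V(K_n)$: I want to show that with positive probability the induced copy $\pi(G)$ uses no color twice. For each unordered pair $\{e,f\}$ of distinct edges of $G$, I define the bad event $B_{\{e,f\}}:=\{c(\pi(e))=c(\pi(f))\}$, and I split the family of bad events into the \emph{cherry events} (those with $e$ and $f$ sharing a vertex; by assumption there are exactly $r$ of them) and the \emph{disjoint events} (those with $V(e)\cap V(f)=\emptyset$, of which there are at most $\binom{|E(G)|}{2}$).

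Next, I would estimate $\Pr[B_{\{e,f\}}]$ using the global constraint. If the color classes have sizes $n_1,n_2,\dots$, then $\sum_i\binom{n_i}{2}\le\binom{n}{2}(k-1)/2$ for any globally $k$-bounded coloring. Since $\pi$ restricted to $V(e)\cup V(f)$ is uniform over ordered tuples of distinct vertices of $K_n$, summing over monochromatic pairs of edges in $K_n$ gives probabilities of order $k/n$ for a cherry event and of order $k/n^2$ for a disjoint event. In the Lu--Sz\'ekely framework, $B_{\{e,f\}}$ and $B_{\{e',f'\}}$ are lopsided-dependent exactly when the vertex sets $V(e)\cup V(f)$ and $V(e')\cup V(f')$ intersect, so the dependency neighborhood of $B_{\{e,f\}}$ consists of pairs of edges meeting the set $V(e)\cup V(f)$ of at most four vertices.

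I would then apply the asymmetric local lemma with weights $\alpha=\Theta(k/n)$ on cherry events and $\beta=\Theta(k/n^2)$ on disjoint events. The local-lemma inequalities reduce to
\[
\alpha\cdot D_c(B)+\beta\cdot D_d(B)=O(1)\quad\text{for every bad event } B,
\]
where $D_c(B)$ and $D_d(B)$ count the dependent cherry and disjoint events, respectively. These counts are controlled via the cherry-count identity $\sum_v d(v)^2=2r+2|E(G)|$, which yields $|E(G)|=O(\sqrt{rn})$, $\Delta(G)\le\sqrt{2r}$, and, more importantly, bounds on degree-weighted sums across $V(e)\cup V(f)$.

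The main obstacle is avoiding the loss of an $r^{1/4}$ factor: the naive bound $D_d(B)\le 8\Delta(G)\cdot|E(G)|=O(r\sqrt{n})$ together with $\beta=\Theta(k/n^2)$ only yields $k=O(n/r)$, which is weaker than $n/r^{3/4}$ by a factor of $r^{1/4}$. The cure, presumably the same as in Theorem~\ref{thm:shearer}, is to replace the uniform bound by an event-by-event estimate in terms of the actual degrees of the (at most four) vertices in $V(e)\cup V(f)$, and then to exploit convexity of $\binom{\cdot}{2}$ to show that most bad events have dependency neighborhoods far smaller than the extreme estimate suggests. Once this is in place, one needs only to absorb the extra contribution of the disjoint bad events into the constant, which is why $\constShRB>\constSh$.
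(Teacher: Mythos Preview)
Your proposal has a genuine gap: the $r^{1/4}$ loss you identify cannot be repaired by ``event-by-event estimates'' and convexity. The asymmetric local lemma requires its inequality at \emph{every} bad event, not on average. For a cherry centered at a vertex $u$ of degree $\Delta_G\approx\sqrt{2r}$, there are $\binom{\Delta_G}{2}\approx r$ other cherries sharing $u$ as center, each contributing weight $\alpha=\Theta(k/n)$ to the LLL condition at that event; this forces $rk/n=O(1)$, i.e.\ $k=O(n/r)$, regardless of how small the dependency neighborhoods of ``typical'' events are. The paper's cure is entirely different from what you guess: it first applies Lemma~\ref{lem:step1} to locate a rainbow clique $H\subseteq K_n$ on $\ell=2r^{1/4}$ vertices with controlled monochromatic codegrees, then maps the $\ell$ highest-degree vertices of $G$ deterministically onto $V(H)$, and only afterwards runs the Lu--Sz\'ekely local lemma on a random bijection of the remaining vertices. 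After this preprocessing the residual maximum degree $\Delta_S$ satisfies $\Delta_S(\Delta_S-1)\le r^{3/4}$, which is precisely what recovers the missing factor.

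A secondary issue: your events $B_{\{e,f\}}$ are not canonical in the sense of Theorem~\ref{thm:luszekely}, and the Lu--Sz\'ekely negative dependency graph has an edge whenever the \emph{image} sets intersect, not only the domain sets. The paper works directly with canonical events $B^{[v_1v_2v_3]}_{u_1\mathyp u_2\mathyp u_3}$ and $B^{[v_1v_2v_3v_4]}_{(u_1u_2)(u_3u_4)}$, partitions them into ten classes $\cB_1,\dots,\cB_{10}$ according to how the $u_i$ split between the high-degree set $L$ and its complement $S$, and bounds each $t_i^u,t_i^v$ separately. Your final intuition---that the disjoint-edge events only worsen the constant---is correct and is exactly how the paper derives $\constShRB$ from $\constSh$, but the route there goes through the two-phase embedding, not through an averaging argument.
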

Since the locally $3$-bounded coloring $c$ of $E(K_n)$ which shows 
the tightness of Theorem~\ref{thm:shearer} is also globally $9$-bounded, 
we conclude that again the number of cherries cannot exceed $\Theta(n^{4/3})$.


\section{Local lemma in the space of random bijections}
\label{sec:lll}
The Lov\'asz local lemma is a tool used for showing the existence of an object 
that does not possess any property from a given list of unwanted properties.
This is achieved by taking a random object and showing that with a positive probability,
the object has none of the unwanted properties. In order to be able
to apply the local lemma, we need to have some control over the mutual correlations
of these properties.

Let $\cB=\{B_1,\dots,B_N\}$ be a set of events, where each event describes
having one of the unwanted properties. The events are usually called the bad
events. We say that a graph $D$ with the vertex set $[N]$ is a \emph{dependency graph}
for $\cB$ if for every $i \in [N]$, the event $B_i$ is mutually independent of
all the events $B_j$ such that $ij \notin E(D)$. In other words, for every 
$i \in [N]$ and every set $J \subseteq \{j : ij \notin E(D)\}$,
it holds that $\prob{B_i \cond \bigwedge_{j \in J} \bar{B_j}\;} = \prob{B_i}$.
Analogously, we say that an $N$-vertex graph $D$ is a \emph{negative dependency graph}
for $\cB$ if for every $i \in [N]$ and every set~$J \subseteq \{j : ij \notin E(D)\}$,
it holds that $\prob{B_i \cond \bigwedge_{j \in J} \bar{B_j}\;} \le \prob{B_i}$.

The original version of the local lemma, which is due to Erd\H{o}s and Lov\'asz~\cite{bib:lll}, 
used a dependency graph for the set of \emph{bad events} in order to control the correlations.
It was first observed by Erd\H{o}s and Spencer~\cite{bib:llll} that actually
the same proof also applies when we capture the correlations 
using a negative dependency graph.
They called this variant \emph{lopsided Lov\'asz local lemma}.
The following is a slightly more general
version of the lemma than the one stated in~\cite{bib:llll}, whose
proof can be found, e.g., in~\cite[Lemma 1.4]{bib:mohr}.
\begin{lemma}[Lopsided Lov\'asz local lemma]
\label{lem:LLLL} 
Let $\cB=\{B_1,\dots,B_N\}$ be a set of bad events with a negative dependency graph $D=([N], \cE)$.
If there exist reals $b_1,\dots,b_N \in (0,1)$ so that
\[\prob{B_i} \le b_i \cdot \prod_{ij \in \cE} (1-b_j) \qquad \textrm{for every }i\in [N], \]
then $\prob{\bigwedge\limits_{i\in[N]} \bar{B_i}\; } > 0$.
\end{lemma}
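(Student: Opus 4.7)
The plan is to prove, by induction on $|J|$, the auxiliary inequality
\[
\prob{B_i \cond \bigwedge_{j \in J} \bar{B_j}\;} \le b_i
\]
for every index $i \in [N]$ and every $J \subseteq [N]\setminus\{i\}$ for which the conditioning event has positive probability. Granting this claim, the lemma follows at once by the chain rule:
\[
\prob{\bigwedge_{i \in [N]} \bar{B_i}\;} = \prod_{i=1}^{N}\left(1 - \prob{B_i \cond \bigwedge_{j<i} \bar{B_j}\;}\right) \ge \prod_{i=1}^{N}(1-b_i) > 0,
\]
and the induction simultaneously certifies that each conditioning event appearing in this product is non-null, since every $1-b_i$ is strictly positive.

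For the base case $|J|=0$, the stated hypothesis gives $\prob{B_i} \le b_i \prod_{ij\in \cE}(1-b_j) \le b_i$, as every factor $(1-b_j)$ lies in $(0,1)$. For the inductive step, partition $J=J_1\cup J_2$ where $J_1=\{j\in J : ij\in \cE\}$ collects the neighbors of $i$ in the negative dependency graph $D$ and $J_2=J\setminus J_1$ consists of the non-neighbors. Applying the definition of conditional probability,
\[
\prob{B_i \cond \bigwedge_{j \in J} \bar{B_j}\;} = \frac{\prob{B_i \wedge \bigwedge_{j \in J_1} \bar{B_j} \cond \bigwedge_{j \in J_2} \bar{B_j}\;}}{\prob{\bigwedge_{j \in J_1} \bar{B_j} \cond \bigwedge_{j \in J_2} \bar{B_j}\;}}.
\]
The numerator is bounded above by $\prob{B_i \cond \bigwedge_{j\in J_2}\bar{B_j}\;}$, which, by the defining property of the negative dependency graph applied to the non-neighbor set $J_2$, is itself at most $\prob{B_i}$.

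For the denominator, enumerate $J_1=\{j_1,\ldots,j_t\}$ in any order and expand by the chain rule into
\[
\prod_{s=1}^{t}\left(1 - \prob{B_{j_s} \cond \bigwedge_{r<s} \bar{B_{j_r}} \wedge \bigwedge_{j \in J_2} \bar{B_j}\;}\right).
\]
Each conditional probability inside this product is indexed by a set of size strictly less than $|J|$, so the induction hypothesis bounds it by $b_{j_s}$; this yields a lower bound of $\prod_{j\in J_1}(1-b_j)$ for the denominator. Dividing and invoking the hypothesis of the lemma, we obtain
\[
\prob{B_i \cond \bigwedge_{j \in J}\bar{B_j}\;} \le \frac{\prob{B_i}}{\prod_{j\in J_1}(1-b_j)} \le \frac{b_i \prod_{ij\in \cE}(1-b_j)}{\prod_{j\in J_1}(1-b_j)} \le b_i,
\]
closing the induction, since $J_1 \subseteq \{j : ij \in \cE\}$ and all factors lie in $(0,1)$.

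The only real obstacle in this otherwise mechanical argument is the careful bookkeeping of the conditional probabilities: one must verify inductively that no conditioning event is null (so every fraction is well-defined) and exploit the split into neighbors $J_1$ and non-neighbors $J_2$ in the right way. The neighbors absorb the correlated part via the factor $\prod_{ij\in\cE}(1-b_j)$ coming from the hypothesis, whereas the non-neighbors are dispatched in a single stroke by the defining inequality of the negative dependency graph, with no need to invoke the inductive hypothesis for them.
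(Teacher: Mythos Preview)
Your proof is correct and is precisely the standard inductive argument for the (lopsided) local lemma. Note that the paper does not actually give its own proof of this lemma; it simply quotes the statement and refers the reader to \cite[Lemma~1.4]{bib:mohr}, so there is no in-paper proof to compare against. Your write-up handles the one genuine subtlety---that the conditioning events in the chain rule are non-null---correctly, though you might spell out the short induction on $i$ explicitly rather than saying ``the induction simultaneously certifies'' it.
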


In our applications, we will be only using the following simpler version of the local lemma,
which is in fact an easy corollary of Lemma~\ref{lem:LLLL}.
Note that this version is often called the asymmetric local lemma (see, e.g.,~\cite[Chapter 19]{bib:molloyreed}):
\begin{lemma}
\label{lem:wLLL} 
Let $\cB=\{B_1,\dots,B_N\}$ be a set of bad events with a negative dependency graph $D=([N], \cE)$.
If
\[\prob{B_i} \le \frac14 \quad\textrm{and}\quad \sum_{ij \in \cE}\prob{B_j}\le \frac14 \qquad \textrm{for every }i\in [N], \]
then $\prob{\bigwedge\limits_{i \in [N]} \bar{B_i}\; } > 0$.
\end{lemma}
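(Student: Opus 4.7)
The plan is to derive Lemma~\ref{lem:wLLL} from the general form of the lopsided local lemma (Lemma~\ref{lem:LLLL}) by a careful choice of the weights $b_i$. Without loss of generality I may assume $\prob{B_i}>0$ for every $i\in[N]$, since any event of probability zero can be removed from $\cB$ without changing either the hypothesis or the conclusion.

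Setting $b_i := 2\prob{B_i}$, the assumption $\prob{B_i}\le 1/4$ guarantees $b_i\in(0,1/2]\subset(0,1)$, so these weights are admissible inputs for Lemma~\ref{lem:LLLL}. It then suffices to verify the inequality $\prob{B_i} \le b_i \prod_{ij\in\cE}(1-b_j)$ for every~$i$.

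For this verification I will invoke the Weierstrass product inequality, which states that $\prod_j (1-x_j) \ge 1-\sum_j x_j$ whenever $x_j\in[0,1]$. Applying it with $x_j=b_j=2\prob{B_j}\in[0,1/2]$ together with the second hypothesis gives
\[
\prod_{ij\in\cE}(1-b_j) \;\ge\; 1-\sum_{ij\in\cE} 2\prob{B_j} \;\ge\; 1-2\cdot\frac14 \;=\; \frac12.
\]
Multiplying by $b_i=2\prob{B_i}$ yields $b_i\prod_{ij\in\cE}(1-b_j)\ge \prob{B_i}$, exactly the required bound. Lemma~\ref{lem:LLLL} then delivers $\prob{\bigwedge_{i\in[N]}\bar{B_i}\,}>0$.

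There is no genuine obstacle in this argument; it is a one-line calibration check. The only point worth flagging is that the constant $1/4$ appearing twice in the hypothesis is chosen precisely so that the two factors of $2$ (one from defining $b_i=2\prob{B_i}$, and one coming from the Weierstrass estimate) balance out to give equality in the critical step, making the statement tight for this style of derivation.
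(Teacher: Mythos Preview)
Your proof is correct. The paper itself does not give a self-contained proof of Lemma~\ref{lem:wLLL}; it only remarks that the statement is an easy corollary of Lemma~\ref{lem:LLLL} and points to \cite[Chapter~19]{bib:molloyreed} for details. Your argument---setting $b_i=2\prob{B_i}$ and using the Weierstrass product inequality to verify the hypothesis of Lemma~\ref{lem:LLLL}---is exactly the standard derivation and is precisely what the paper has in mind.
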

The lopsided variant of the asymmetric local lemma is 
mentioned in~\cite[Chapter 19.4]{bib:molloyreed} only implicitly.
However, its proof is identical to the proof where $D$ is only a dependency graph,
which is proven in~\cite[Chapter 19.3]{bib:molloyreed}.

The most important thing in many applications of the (lopsided) local lemma
is to find an appropriate (negative) dependency graph for a given set of bad events. 
Lu and Sz\'ekely~\cite{bib:luszek} came up with a particularly useful
construction of a negative dependency graph in the case that the underlying
probability space is generated by taking a random bijection between two sets.

Let $X$ and $Y$ be two sets of size $n$ and $\cS_n$ the set of all bijections from $X$ to $Y$. 
Consider the probability space $\Omega$ 
generated by picking a uniformly random element of $\cS_n$. 
We say that an event $B$ is \emph{canonical} if there exist two sets $X'\subseteq X$, $Y'\subseteq Y$ and a
bijection $\tau: X' \to Y'$ such that $B = \{\pi \in \cS_n : \pi(a) = \tau(a) \textrm{ for all } a \in X'\}$.
For two sets $X' \subseteq X$ and $Y' \subseteq Y$ of the same size and a bijection $\tau: X' \to Y'$,
we denote the corresponding canonical event by $\Omega(X',Y',\tau)$.

We say that two events $\Omega(X'_1,Y'_1,\tau_1)$ and $\Omega(X'_2,Y'_2,\tau_2)$ \emph{$\cS$-intersect} if 
the sets $X'_1$ and $X'_2$ intersect, or the sets $Y'_1$ and $Y'_2$ intersect.
A result of Lu and Sz\'ekely~\cite{bib:luszek} states that
for a set of bad canonical events, the graph with vertices
being the bad events and edges being between any two events that
$\cS$-intersect is a negative dependency graph.
\begin{theorem}[\cite{bib:luszek}]
\label{thm:luszekely}
Let $\Omega$ be the probability space generated by picking a random bijection between two sets $X$ and $Y$ of size $n$
uniformly at random.
Next, let $\cB=\{B_1,\dots,B_N\}$ be a set of canonical events in $\Omega$ and
let $D$ be a graph with the vertex set $[N]$ and $ij \in E(D)$ if and only if
the events $B_i$ and $B_j$ $\cS$-intersect. It holds that $D$ is a negative dependency graph.
\end{theorem}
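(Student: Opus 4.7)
I would verify the negative dependency condition directly from its definition. Fix $i \in [N]$ and $J \subseteq \{j : ij \notin \cE\}$, and write $B_i = \Omega(X_i', Y_i', \tau_i)$ with $k := |X_i'| = |Y_i'|$. The desired inequality is equivalent to
\[
\prob{\bigwedge_{j \in J} \bar{B_j} \cond B_i} \;\le\; \prob{\bigwedge_{j \in J} \bar{B_j}}.
\]
By the non-$\cS$-intersection hypothesis, for every $j \in J$ the canonical event $B_j = \Omega(X_j', Y_j', \tau_j)$ satisfies $X_j' \cap X_i' = \emptyset$ and $Y_j' \cap Y_i' = \emptyset$; in particular $X_j' \subseteq X \setminus X_i'$ and $Y_j' \subseteq Y \setminus Y_i'$, so each $B_j$ is also a canonical event in the space $\Omega'$ of uniformly random bijections $X \setminus X_i' \to Y \setminus Y_i'$.

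The plan is to evaluate both sides of the target inequality in terms of the auxiliary space $\Omega'$. Conditioning on $B_i$ fixes $\pi|_{X_i'} = \tau_i$ and leaves $\pi|_{X \setminus X_i'}$ uniformly distributed on bijections $X \setminus X_i' \to Y \setminus Y_i'$; since each $B_j$ depends only on $\pi|_{X_j'}$, this gives
\[
\prob{\bigwedge_{j \in J} \bar{B_j} \cond B_i} \;=\; \prob{\bigwedge_{j \in J} \bar{B_j} \text{ in } \Omega'}.
\]
It therefore suffices to prove the monotonicity $\prob{\bigwedge_{j \in J} \bar{B_j} \text{ in } \Omega'} \le \prob{\bigwedge_{j \in J} \bar{B_j} \text{ in } \Omega}$.

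For this step I would condition on the random $k$-subset $Y_0 := \pi(X_i')$, which is uniform over $\binom{Y}{k}$. Any $j \in J$ with $Y_j' \cap Y_0 \neq \emptyset$ makes $B_j$ impossible under this conditioning, so $\bar{B_j}$ holds automatically; write $J(Y_0) := \{j \in J : Y_j' \cap Y_0 = \emptyset\}$ for the surviving indices. Since $\bigcup_{j \in J(Y_0)} Y_j'$ is contained in both $Y \setminus Y_0$ and $Y \setminus Y_i'$, one can select a bijection $\phi : Y \setminus Y_0 \to Y \setminus Y_i'$ that fixes this union pointwise; pushing $\pi|_{X \setminus X_i'}$ forward through $\phi$ transforms the conditional law (given $\pi(X_i') = Y_0$) into the uniform law on $\Omega'$ while preserving each surviving canonical event exactly. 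Hence
\[
\prob{\bigwedge_{j \in J} \bar{B_j} \cond \pi(X_i') = Y_0} \;=\; \prob{\bigwedge_{j \in J(Y_0)} \bar{B_j} \text{ in } \Omega'} \;\ge\; \prob{\bigwedge_{j \in J} \bar{B_j} \text{ in } \Omega'},
\]
the final inequality because the right-hand intersection is over a possibly larger index set. Averaging over $Y_0$ now delivers the desired monotonicity.

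The main subtlety lies in this last step: one must be careful that the relabeling $\phi$ preserves each $B_j$ for $j \in J(Y_0)$ \emph{as the same canonical event with the same partial bijection $\tau_j$}, which is why $\phi$ must fix $\bigcup_{j \in J(Y_0)} Y_j'$ pointwise rather than merely map it to itself. This is possible precisely because $Y_j' \cap Y_0 = \emptyset$ and $Y_j' \cap Y_i' = \emptyset$ hold simultaneously for such $j$, so both the source and target of $\phi$ contain this union. Once this verification is carried out, chaining the three displayed equations yields $\prob{B_i \cond \bigwedge \bar{B_j}} \le \prob{B_i}$, and the remainder of the argument is routine.
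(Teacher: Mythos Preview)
The paper does not prove Theorem~\ref{thm:luszekely}; it is quoted from Lu and Sz\'ekely~\cite{bib:luszek} and followed only by the remark that the original reference establishes a slightly sharper statement (with the smaller graph $D'$). So there is no ``paper's own proof'' to compare against.

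That said, your argument is correct and is essentially the standard route to this result. The key steps---identifying the conditional law given $B_i$ with the uniform bijection on $X\setminus X_i'\to Y\setminus Y_i'$, and then comparing $\Omega$ with $\Omega'$ by conditioning on the random image set $Y_0=\pi(X_i')$ and relabelling via a bijection $\phi$ that fixes $\bigcup_{j\in J(Y_0)}Y_j'$---are exactly the right ideas, and you have correctly isolated the one delicate point (that $\phi$ must fix the relevant $Y_j'$ pointwise, which is possible precisely because each such $Y_j'$ avoids both $Y_0$ and $Y_i'$). One very minor quibble: the equivalence you invoke at the outset between $\prob{B_i\mid\bigwedge\bar{B_j}}\le\prob{B_i}$ and $\prob{\bigwedge\bar{B_j}\mid B_i}\le\prob{\bigwedge\bar{B_j}}$ tacitly assumes $\prob{\bigwedge\bar{B_j}}>0$; when this probability vanishes the original inequality is vacuous anyway, so nothing is lost, but it is worth a one-line remark.
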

Let us note that Lu and Sz\'ekely~\cite{bib:luszek} proved the statement above with a slightly
better choice of the negative dependency graph. Namely, they showed that a
graph $D'$ with the set of vertices $[N]$, where a vertex representing $\Omega(X'_1,Y'_1,\tau_1)$ is adjacent to 
a vertex representing $\Omega(X'_2,Y'_2,\tau_2)$ if and only if 
\[
\big(\exists x \in X'_1 \cap X'_2: \tau_1(x)\neq\tau_2(x)\big)
\,\textrm{ or }\,
\big(\exists y \in Y'_1 \cap Y'_2: \tau_1^{-1}(y)\neq\tau_2^{-1}(y)\big)
,\] is a negative dependency graph.
In other words, $\Omega(X'_1,Y'_1,\tau_1)$ and $\Omega(X'_2,Y'_2,\tau_2)$ 
are adjacent in $D'$ if and only if the two probability events in $\Omega$ are disjoint.
It immediately follows that $D'$ is a subgraph of $D$, and since $D'$ is a negative dependency graph,
the graph $D$ must be a negative dependency graph as well. 

\section{Proofs of Theorems~\ref{thm:shearer} and~\ref{thm:rbshearer}}
\label{sec:proof}
Before we start with a rigorous proof, let us give a brief outline.
As we have seen in the introduction, the local lemma is the right tool if the
maximum degree $\Delta(G) = O\left(\sqrt{n}\right)$.
Unfortunately, our upper bound on the number of
cherries 
cannot provide such a strong control on $\Delta(G)$.  However, a straightforward counting argument
yields that only a very small number of vertices in $G$ can have a degree of order $\Omega\left(\sqrt{n}\right)$.
Furthermore, we show in Lemma~\ref{lem:step1} that since $c$ is locally (globally) bounded,
there is a complete subgraph $H$ of $K_n$ of the appropriate size that is properly colored (rainbow) in
$c$, and also no two of its vertices have too large monochromatic co-degree in
$V(K_n)\setminus V(H)$.  Therefore, we can map the large-degree vertices of $G$
to the vertices of $H$, and map the other vertices of $G$ using the local lemma.
In order to get strong bounds, we will also need precise upper bounds on
the number of edges of $G$ of certain types, and on the number of paths of
length $2$ starting at a given vertex. Those bounds are established in
Lemmas~\ref{lem:edges} and~\ref{lem:leafcher}, respectively, using the Cauchy-Schwarz
inequality.

Through the whole section, we will omit floors and ceilings whenever it is not critical. 
We start our exposition with the following three auxiliary lemmas.
\begin{lemma}
\label{lem:step1}
For all positive integers $n$, $k$ and $r$ such that 
$k\le \left({\frac{n}{\constSh r^{3/4}}}\right)$,
the following is true.
Every locally (globally) $k$-bounded coloring $c$ of $K_n$ contains a~properly colored 
(rainbow) complete subgraph $H$ of size $2r^{1/4}$ such that for every
two vertices $v_1,v_3 \in V(H)$, the set $\{v_2 \in V(K_n) : c(v_1 v_2) = c(v_2 v_3) \}$ has
size at~most~$5kr^{1/4}$.
\end{lemma}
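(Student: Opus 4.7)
The plan is to establish Lemma~\ref{lem:step1} by a random-subset-plus-deletion argument. I would choose a random vertex subset $H_0 \subseteq V(K_n)$ of size slightly larger than $2r^{1/4}$, bound in expectation the number of \emph{violations} inside $H_0$ (meaning monochromatic cherries, plus monochromatic matchings in the rainbow setting, plus pairs with oversized codegree), and then delete one vertex per violation to obtain the required set $H$.

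First I would collect three global counting bounds from $k$-boundedness. For any vertex $v$, since $\deg_c(v) \le k$ for every color $c$ and $\sum_c \deg_c(v) = n-1$, one has $\sum_c \binom{\deg_c(v)}{2} \le (k-1)(n-1)/2$; summing over $v$ bounds the total number of unordered monochromatic cherries in $K_n$ by $n^2 k/2$. In the globally $k$-bounded case, the analogous estimate $\sum_c \binom{k_c}{2} \le (k/2)\binom{n}{2} \le k n^2/4$, where $k_c$ is the number of edges of color $c$, additionally bounds the number of monochromatic matchings. Finally, since each unordered monochromatic cherry contributes exactly $1$ to the codegree of its endpoint pair, summing over pairs shows $\sum_{\{v_1,v_3\}} \mathrm{codeg}(v_1,v_3) \le n^2 k/2$, so Markov's inequality bounds the number of pairs whose codegree exceeds $5kr^{1/4}$ by $n^2/(10\, r^{1/4})$.

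Next, set $m_0 := 5\,r^{1/4}$ and pick $H_0$ uniformly among $m_0$-subsets of $V(K_n)$. Using $\prob{S\subseteq H_0} \le (m_0/n)^{|S|}(1+o(1))$, valid because the hypothesis $k\ge 1$ forces $n\ge \constSh\, r^{3/4}$, the expected numbers of cherries, matchings, and bad pairs inside $H_0$ are at most
\[
\frac{k\, m_0^3}{2n}\,,\qquad \frac{k\, m_0^4}{4n^2}\,,\qquad \frac{m_0^2}{10\, r^{1/4}}(1+o(1))\,,
\]
respectively. Substituting $k \le n/(\constSh\, r^{3/4})$ and $m_0 = 5r^{1/4}$, the first two are at most $125/1120 < 0.12$ and $o(1)$, while the third equals $(1+o(1))\cdot 2.5\, r^{1/4}$. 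Thus $\avg{|H_0| - \mathrm{violations}} \ge 2r^{1/4}$ for every $r \ge 1$, and by linearity some realization of $H_0$ achieves this bound; deleting one vertex per violation (the center of each cherry, an endpoint of each matching, either vertex of each bad pair) yields the required $H$.

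The main obstacle is the balance of constants. Writing $m_0 = \alpha\, r^{1/4}$, the dominant bad-pair contribution forces the inequality $\alpha - \alpha^2/10 \ge 2$, which is solvable precisely for $\alpha \in [5-\sqrt 5,\, 5+\sqrt 5]$; the constant $\constSh = 560$ in the hypothesis is comfortably large enough that the sub-dominant cherry and matching terms fit into the remaining slack. Choosing $\alpha = 5$ works, with only minor bookkeeping around floors/ceilings (writing $m_0 = \lceil 5r^{1/4}\rceil$) and the observation that a greedy deletion may kill multiple violations per deleted vertex, so the size bound only improves.
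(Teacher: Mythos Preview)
Your proposal is correct and follows essentially the same approach as the paper: both arguments count monochromatic cherries (at most $n^2k/2$), monochromatic matchings (at most $n^2k/4$ in the global case), and ``bad'' codegree pairs (at most $n^2/(10r^{1/4})$), then sample a random subset of expected size $5r^{1/4}$, bound the expected number of violations, and delete one vertex per violation. The only cosmetic difference is that the paper samples each vertex independently with probability $p=5r^{1/4}/n$ while you sample a uniform $m_0$-subset; the resulting estimates and the final deletion step are identical.
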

\begin{proof}
First note that (both locally and globally) $k$-bounded colorings contain at most
$\frac12n(n-1)k$ monochromatic paths on three vertices.  To see that, we claim
that for a fixed choice of the middle vertex $v_2$ of such a path, there are at
most $\frac12(n-1)k$ choices for the two endpoints of the path. Indeed, after
choosing one of the endpoints, which can be done in $(n-1)$ ways, there are at
most $k$ possible other endpoints so that the path monochromatic. Furthermore,
we counted every monochromatic path with $v_2$ as the middle point exactly
twice. Summing over all choices of $v_2$ yields the bound $\frac12n(n-1)k$.

Now let $A$ be the following auxiliary graph: the vertex set is $V(K_n) = [n]$, and the
vertices $v_1 \in V(A)$ and $v_3\in V(A)$ are adjacent if and only if there exist at least
$5kr^{1/4}$ vertices $v_2\in$ so that $c(v_1v_2) = c(v_2v_3)$. It follows that the number
of edges of $A$ is at most 
$\frac{n(n-1)}{10r^{1/4}}$. We denote the number of edges of $A$ by $e(A)$.

We construct the desired subgraph $H$ using the first moment method.
Let $p := 5r^{1/4} \cdot n^{-1}$, and let $P'$ be a random subset of $[n]$ where
we put each element with probability $p$ independently on the others. 
The expected size of $P'$ is $5r^{1/4}$, and the expected number of
edges of the subgraph of $A$ induced by $P'$ is at most $e(A) \cdot p^2 \le 2.5r^{1/4}$.
We set $U_1 \subseteq P'$ to be the set containing the smaller of the two vertices
for each edge of the subgraph. It can be that $U_1$ contains both endpoints for some edge
because its larger endpoint is the smaller endpoint of some other edge. 
Note that $\avg{|U_1|} \le 2.5r^{1/4}$, and that
for any two vertices $v_1$ and $v_3$ from $P' \setminus U_1$,
the set $\{v_2 \in V(K_n) : c(v_1 v_2) = c(v_2 v_3) \}$ has size at most~$5kr^{1/4}$.

Next, let $U_2 \subseteq P'$ be the set containing the smallest vertex from every
$\{v_1,v_2,v_3\}\subseteq P'$ with $c(v_1v_2)=c(v_2v_3)$.
It follows that \[
\avg{|U_2|} \le \frac{n^2k \cdot p^3}2 \le \frac{125r^{3/4}\cdot k}{2n} \le \frac{125}{1120} \le \frac18,
\]
and the coloring induced by $c$ on the subgraph $P' \setminus U_2$
is proper.

Finally, if $c$ is globally $k$-bounded, observe that there are at most $n^2 k/4$ 
sets $\{v_1,v_2,v_3,v_4\} \subseteq [n]$ such that $c(v_1v_2) = c(v_3v_4)$.
Let $U_3$ be the set containing the smallest
vertex from every $\{v_1,v_2,v_3,v_4\} \subseteq P'$ with $c(v_1v_2)=c(v_3v_4)$.
In the case of $c$ being locally $k$-bounded, we set $U_3 := \emptyset$.
It holds that
\[
\avg{|U_3|} \le \frac{n^2k\cdot p^4}{4} \le \frac{625rk}{4n^2} \le \frac{625}{2240} \le \frac{3}{8}.
\]
It follows that in the case $c$ is globally $k$-bounded, the subgraph induced by
$P' \setminus \left(U_2 \cup U_3\right)$ is rainbow in $c$.

By linearity of expectation, the set $P:=P' \setminus \left(U_1 \cup U_2 \cup
U_3\right)$ has expected size at least $5r^{1/4} - 2.5r^{1/4} - 0.5 \ge 2r^{1/4}$.
On the other hand, the subgraph induced by $P$ has all the desired properties.
\end{proof}

\begin{lemma}
\label{lem:edges}
Every $n$-vertex graph $G$ with at most $r$ cherries contains 
at most $\max\left\{{n,\sqrt{rn}}\right\}$ edges.
Furthermore, for any subset $T\subseteq V(G)$, the number of edges with at
least one endpoint in $T$ is at most $\max\left\{{4|T|, 2\sqrt{r |T|}}\right\}$.
\end{lemma}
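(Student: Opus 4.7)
The plan is to exploit the standard identity that the number of cherries in $G$ equals $\sum_{v \in V(G)} \binom{d(v)}{2}$, where $d(v)$ denotes the degree of $v$ in $G$, and then to combine this with Cauchy--Schwarz on the degree sequence.

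For the first claim, let $e := |E(G)|$. Writing the cherry count as the sum above and using $\sum_v d(v) = 2e$, the hypothesis gives
\[ \sum_{v \in V(G)} d(v)^2 \;\le\; 2r + \sum_{v \in V(G)} d(v) \;=\; 2r + 2e. \]
Cauchy--Schwarz then yields $\sum_v d(v)^2 \ge (2e)^2/n$, so $e$ satisfies the quadratic inequality $2e^2 - en - rn \le 0$, whence $e \le \tfrac{1}{4}\bigl(n + \sqrt{n^2+8rn}\bigr)$. A short two-case check (according to whether $r \le n$ or $r \ge n$) shows this right-hand side is bounded above by $\max\{n, \sqrt{rn}\}$; specifically, when $r \le n$ one bounds $\sqrt{n^2+8rn} \le 3n$, and when $r \ge n$ one verifies $\sqrt{n^2+8rn} \le 4\sqrt{rn}-n$ after squaring.

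For the second claim, I would apply exactly the same strategy but restricted to $T$. Let $e_T$ denote the number of edges with at least one endpoint in $T$, and set $S := \sum_{v \in T} d(v)$. Since each such edge contributes at least $1$ and at most $2$ to $S$, we have $e_T \le S$. Every cherry whose middle vertex lies in $T$ is counted by $\sum_{v \in T} \binom{d(v)}{2}$, and this sum is at most $r$, so
\[ \sum_{v \in T} d(v)^2 \;\le\; 2r + S. \]
Cauchy--Schwarz over $T$ gives $\sum_{v \in T} d(v)^2 \ge S^2/|T|$, so $S^2 - S|T| - 2r|T| \le 0$, hence $S \le \tfrac{1}{2}\bigl(|T| + \sqrt{|T|^2 + 8r|T|}\bigr)$. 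Splitting once more into the two regimes $r \le 2|T|$ and $r \ge 2|T|$ shows that this quantity is at most $\max\{4|T|, 2\sqrt{r|T|}\}$.

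Neither step is conceptually hard; the only thing requiring a bit of care is the case analysis that converts the exact quadratic bound into the clean $\max\{\cdot,\cdot\}$ form stated in the lemma, and in particular making sure the constants $1$ and $\sqrt{r/n}$ (respectively $4$ and $2\sqrt{r/|T|}$) are indeed sufficient in each regime. The mild slack in the constants $4$ and $2$ in the second statement (as opposed to $1$ and $1$ in the first) comes precisely from the loss $e_T \le S$ when edges inside $T$ are double-counted in $S$.
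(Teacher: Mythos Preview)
Your proposal is correct and follows essentially the same approach as the paper: both use the identity $\sum_v \binom{d(v)}{2} \le r$ together with Cauchy--Schwarz on the degree sequence (restricted to $T$ for the second part). The only cosmetic difference is in extracting the final bound: rather than solving the quadratic and case-splitting on the relative size of $r$ and $n$ (or $|T|$), the paper simply assumes the first term of the $\max$ fails (e.g.\ $e(G)\ge n$) and absorbs the linear term $2ne(G)$ into $2e(G)^2$ to obtain the second term directly.
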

\begin{proof}
Let $e(G)$ be the number of edges of $G$.
We claim that $4e(G)^2 \le (2r + 2e(G))n$.
Indeed, by the Cauchy-Schwarz inequality 
\[
\left(\sum_{u\in V(G)} \deg(u)\right)^2 \le n \cdot \sum_{u\in V(G)} \deg^2(u).
\]
However, $\sum_u \deg(u) = 2e(G)$ and $\sum_u \deg(u)(\deg(u)-1) = 2r$.
Therefore, if $e(G)\ge n$, then $4e(G)^2 \le n(2r+2e(G)) \le 2rn + 2e(G)^2$.

Analogously for the set $T$, let $e(T,G)$ be the number of edges of $G$ with at
least one endpoint in $T$. Note that
\[\frac12 \cdot \sum_{u\in T} \deg(u) \le e(T,G) \le \sum_{u\in T} \deg(u).\]
Again by Cauchy-Schwarz,
\[e(T,G)^2 \le |T| \cdot \left( \sum_{u\in T} \deg(u)(\deg(u)-1) + \sum_{u\in T} \deg(u) \right)
\le 2r|T| + 2|T|e(T,G)
.\]
Hence if $e(T,G) \ge 4|T|$, then $e(T,G)^2 \le 4r|T|$.
\end{proof}

\begin{lemma}
\label{lem:leafcher}
Let $G$ be an $n$-vertex graph with at most $r$ cherries and $u \in V(G)$ one of its vertices.
Then $G$ contains at most $\sqrt{2r \deg\left(u\right)}$ cherries with $u$ being one of the two leaves.
\end{lemma}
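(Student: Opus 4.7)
The plan is to count cherries with $u$ as a leaf directly, based on the choice of the middle vertex. A cherry in which $u$ is a leaf has the form $u\mathyp v\mathyp w$ with $v \in N(u)$ and $w\in N(v)\setminus\{u\}$, so the number of such cherries equals
\[
C(u) \;:=\; \sum_{v \in N(u)} \bigl(\deg(v) - 1\bigr).
\]

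Next I would apply the Cauchy--Schwarz inequality to this sum, viewing it as $\sum_{v \in N(u)} 1 \cdot (\deg(v)-1)$:
\[
C(u)^2 \;\le\; \deg(u) \cdot \sum_{v \in N(u)} \bigl(\deg(v) - 1\bigr)^2.
\]
Since every $v \in N(u)$ satisfies $\deg(v) \ge 1$, we have $(\deg(v)-1)^2 \le \deg(v)(\deg(v)-1)$, so the right-hand side is at most
\[
\deg(u) \cdot \sum_{v \in N(u)} \deg(v)\bigl(\deg(v) - 1\bigr) \;\le\; \deg(u) \cdot \sum_{v \in V(G)} \deg(v)\bigl(\deg(v) - 1\bigr).
\]

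Finally, I would use the standard double-counting identity, already exploited in the proof of Lemma~\ref{lem:edges}: each cherry of $G$ is counted exactly once by its middle vertex with multiplicity $\binom{\deg(v)}{2}$, so $\sum_{v \in V(G)} \deg(v)(\deg(v)-1) = 2r$. Combining the displayed inequalities yields $C(u)^2 \le 2r\deg(u)$, and taking square roots gives the claim. There is no real obstacle here; the only thing to check is the elementary bound $(\deg(v)-1)^2 \le \deg(v)(\deg(v)-1)$, which is why the Cauchy--Schwarz step closes cleanly against the cherry count.
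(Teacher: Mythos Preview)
Your proof is correct and essentially identical to the paper's: both count cherries with $u$ as a leaf as $\sum_{v\in N(u)}(\deg(v)-1)$, apply Cauchy--Schwarz, bound $(\deg(v)-1)^2\le \deg(v)(\deg(v)-1)$, and use $\sum_v \deg(v)(\deg(v)-1)\le 2r$. The only cosmetic difference is that the paper compresses your three displayed steps into a single chain of inequalities.
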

\begin{proof}
Let $N \subseteq V(G)$ be the set of the neighbors of $u$. 
The number of cherries, where $u$ is one of the leaves, is equal to $\sum_{u' \in N} (\deg(u')-1)$.
As in the proof of the previous lemma,
\[\left(\sum_{u' \in N} (\deg(u')-1)\right)^2 \le |N| \cdot \sum_{u' \in N} \deg(u')(\deg(u')-1) \le 2r \deg(u)
.
\]
\end{proof}


We are now ready to prove Theorem~\ref{thm:shearer}. 
\begin{proof}[Proof of Theorem~\ref{thm:shearer}]
Let $\Delta_G$ be the maximum degree of $G$, $C:=\constSh$ and $k:=\frac{n}{C r^{3/4}}$.
If $n < 2C$ or $r > (n/2C)^{4/3}$, then $k\le 1$ and hence the statement of the
theorem is trivial. For the rest of the proof, we assume $n \ge 2C$ and $r \le (n/2C)^{4/3}$.
We may also assume that $r\ge16$.  Indeed, if $r \le 15$ then the maximum degree of $G$
is at most $6$.
If $\Delta_G=6$, then $G$ must be a disjoint union of
one star with $6$ leaves and a graph on $(n-7)$ vertices with maximum degree
one. Such a graph can be easily embdedded in a greedy fashion.  On the other
hand, if $\Delta_G \le 5$ then the statement directly follows from
Theorem~\ref{thm:bkp:1}.

%
Now observe that $\Delta_G(\Delta_G-1) \le 2r$ as otherwise the vertex of $G$ with the maximum
degree is contained in more than $r$ cherries. Since $r\ge16$, we conclude that
\begin{equation}
 \Delta_G \le \sqrt{2r} + 1 \le 2\sqrt{r}.
\label{eq:maxdeg}
\end{equation}
Without loss of generality, $V(G) = V(K_n) = [n]$, and the vertices of $V(G)$ are in
the descending order according to their degrees (breaking ties arbitrarily). In other words, if $u,v \in V(G)$ and $u < v$,
then $\deg_G(u) \ge \deg_G(v)$.
Let~{$P \subseteq V(K_n)$} be the properly colored complete subgraph of $K_n$ of
size $\ell :=  2r^{1/4}$ given by Lemma~\ref{lem:step1} for $c$, $r$ and $k$. Set
$Q:=V(K_n) \setminus P$. It follows that for every $v_1,v_3 \in P$ there are at
most $5kr^{1/4}$ choices of $v_2 \in Q$ so that $c(v_1v_2) = c(v_2v_3)$.
On the other hand, let $L$ be the set of the first $\ell$ vertices of $G$,
i.e., 
the set of $\ell$ vertices with the largest degrees.
Let $S := V(G) \setminus L$ and let $\Delta_S := \max_{u \in S} \deg_G(u)$.
Note that
\begin{equation}
\Delta_S (\Delta_S-1) \le 2r / \ell = r^{3/4},
\label{eq:maxdegS}
\end{equation}
as otherwise $G$ contains more than $r$ cherries.

Now we describe how we find a properly edge-colored copy of $G$ in $c$.
First, fix an arbitrary bijective map $f_1: L \to P$. Let us emphasize 
that any such $f_1$ will be possible to extend into a properly colored copy of $G$.
The remaining vertices of
$G$, i.e., the vertices from $S$, are mapped by a uniformly chosen random
bijection $f_2: S \to Q$. Finally, let $f:=f_1 \cup f_2$
be the bijection between $V(G)$ and $V(K_n)$ and let $f(G)$ denote the (random)
copy of $G$ in $K_n$ given by $f$. We use Theorem~\ref{thm:luszekely} and Lemma~\ref{lem:wLLL} to show
that, with a positive probability, the copy $f(G)$ is properly colored by $c$ restricted to the edges of $f(G)$.

Before we proceed further, let us introduce some additional notation.
We denote a cherry in $G$ with the middle vertex $u_2$ and the endpoints $u_1,u_3$ such
that $u_1 < u_3$ by $u_1\mathyp u_2\mathyp u_3$. Through the whole paper, we will
write $u_1\mathyp u_2\mathyp u_3$ only in the case when $u_1 < u_3$.
On the other hand, for $v_1, v_2, v_3 \in V(K_n)$, we say that the triple $[v_1 v_2 v_3]$ is 
$c$-monochromatic if $c(v_1 v_2) = c(v_2 v_3)$.
Let us emphasize that in this definition we assume neither $v_1 < v_3$, nor $v_1 > v_3$.

Let $\cR(G)$ be the set of all cherries in $G$,
and let $\cC(c)$ be the
set of all $c$-monochromatic triples $[v_1v_2v_3]$. Note that
$[v_1v_2v_3] \in \cC(c) \iff [v_3v_2v_1] \in \cC(c)$.
Also note that $|\cC(c)| \le n(n-1)k$, since there are $n(n-1)$ choices
of the vertices $v_1$ and $v_2$, and then at most $k$ choices of $v_3$ so
that $c(v_1v_2)=c(v_2v_3)$.
Our aim is to show that the bijection $f$
is such that for every cherry $u_1\mathyp u_2\mathyp u_3 \in \cR(G)$ 
it holds that $[f(u_1)f(u_2)f(u_3)] \notin \cC(c)$.
Since the image of $f_1$ is $P$, which induces a properly colored clique in $c$,
it follows 
that $[f(u_1)f(u_2)f(u_3)] \notin \cC(c)$ for every cherry $u_1\mathyp u_2
\mathyp u_3$ with $\{u_1,u_2,u_3\} \subseteq L$.

For a cherry $u_1\mathyp u_2\mathyp u_3 \in \cR(G)$ with $\{u_1,u_2,u_3\} \cap S \neq \emptyset$
and a triple $[v_1v_2v_3] \in \cC(c)$,
let $B^{[v_1v_2v_3]}_{u_1\mathyp u_2 \mathyp u_3}$ denote the event $\bigwedge\limits_{i\in\{1,2,3\}} [f(u_i)=v_i]$,
and let $\cB$ be the set of all events $B^{[v_1v_2v_3]}_{u_1\mathyp u_2 \mathyp u_3}$ that satisfy
\begin{itemize}
\item $u_1\mathyp u_2 \mathyp u_3 \in \cR(G)$ and $[v_1v_2v_3]\in \cC(c)$,
\item $\left\{u_1,u_2,u_3\right\} \cap S \neq \emptyset$,
\item $\forall i \in \{1,2,3\}: u_i \in S \iff v_i \in Q$, and
\item $\forall i \in \{1,2,3\}: u_i \in L \Longrightarrow f_1(u_i) = v_i$.
\end{itemize}
Note that since for every $B\in\cB$ at least one of the vertices $u_i$, where $i\in\{1,2,3\}$,
is mapped to $v_i$ by the randomly chosen bijection $f_2$, it holds that $\prob{B} \le 1/(n-\ell) \le 1/4$.

It follows that two events
$B^{[v_1v_2v_3]}_{u_1\mathyp u_2 \mathyp u_3}$ and $B^{[v_4v_5v_6]}_{u_4\mathyp u_5 \mathyp u_6}$
$\cS$-intersect if and only if the sets $\{u_1,u_2,u_3\}$ and $\{u_4,u_5,u_6\}$ intersect or the sets $\{v_1,v_2,v_3\}$ and $\{v_4,v_5,v_6\}$
intersect.
Lemma~\ref{lem:wLLL} states that in order to conclude  that the probability $\prob{\bigwedge\limits_{B\in\cB} \bar{B}\; } > 0$,
it is enough to show that 
\begin{equation}
\label{eq:proper:localcond}
 \sum\limits_{\substack{B' \in \cB: \\ B\textrm{ and } B'\\\cS\textrm{-intersect}}} \prob{B'} \le \frac14 \qquad \textrm{for every } B \in \cB
.
\end{equation}
To do so, we split the events $B^{[v_1v_2v_3]}_{u_1\mathyp u_2 \mathyp u_3} \in \cB$ into 
five classes $\cB_1,\dots,\cB_5$ based on how their sets $\{u_1,u_2,u_3\}$ intersect the set $S$:
\begin{itemize}
\item If $\{u_1,u_2,u_3\} \subseteq S$, then $B^{[v_1v_2v_3]}_{u_1\mathyp u_2 \mathyp u_3} \in \cB_1$.
\item If $\{u_2, u_3\} \subseteq S$ and $u_1 \in L$, then $B^{[v_1v_2v_3]}_{u_1\mathyp u_2 \mathyp u_3} \in \cB_2$.
\item If $\{u_1,u_3\} \subseteq S$ and $u_2 \in L$, then $B^{[v_1v_2v_3]}_{u_1\mathyp u_2 \mathyp u_3} \in \cB_3$.
\item If $u_3 \in S$ and $\{u_1,u_2\} \subseteq L$, then $B^{[v_1v_2v_3]}_{u_1\mathyp u_2 \mathyp u_3} \in \cB_4$.
\item If $u_2 \in S$ and $\{u_1,u_3\} \subseteq L$, then $B^{[v_1v_2v_3]}_{u_1\mathyp u_2 \mathyp u_3} \in \cB_5$;
\end{itemize}
see Figure~\ref{fig:events_cherries} for an example for each of the classes.
Note that since $u_1<u_3$ it follows that if $u_3 \in L$ then also $u_1 \in L$. Thus indeed the classes $\cB_1,\dots,\cB_5$ split the set $\cB$.
It holds that
\begin{align*}
& \prob{B}=\frac1{(n-\ell)(n-\ell-1)(n-\ell-2)}\quad & \textrm{for any } & B\in\cB_1,\\
& \prob{B}=\frac1{(n-\ell)(n-\ell-1)}\quad & \textrm{for any } & B\in\cB_2 \cup \cB_3, \textrm{ and}\\
& \prob{B}=\frac1{(n-\ell)}\quad & \textrm{for any } & B\in\cB_4 \cup \cB_5.
\end{align*}

\begin{figure}[htp]
\begin{center}
\medskip
\foreach \n in {1,...,4}{ \includegraphics[scale=0.8,page=\n]{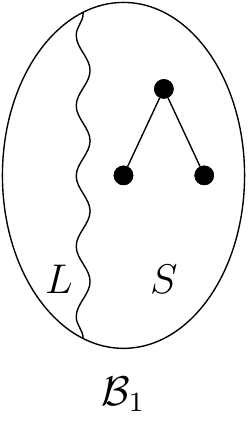}\hfill}
{ \includegraphics[scale=0.8,page=5]{fig-lll_simple}}
\end{center}
\caption{The intersection types defining the classes $\cB_1,\dots,\cB_5$.}
\label{fig:events_cherries}
\end{figure}

For every vertex $u \in S$ and two integers $i \in [5]$ and $j\in[3]$, let $t^{u_j}_i (u)$ be the number of events
$B^{[v_1v_2v_3]}_{u_1\mathyp u_2 \mathyp u_3} \in \cB_i$ such that $u=u_j$.
Note that for every $u \in S$, the values of 
$t^{u_1}_2(u)$, $t^{u_2}_3(u)$, $t^{u_1}_4(u)$, $t^{u_2}_4(u)$, $t^{u_1}_5(u)$ and $t^{u_3}_5(u)$ are equal to~$0$.
Analogously, for every vertex $v \in Q$ and integers $i \in [5]$ and $j\in[3]$, let $t^{v_j}_i (v)$ be the number of events
$B^{[v_1v_2v_3]}_{u_1\mathyp u_2 \mathyp u_3} \in \cB_i$ such that $v=v_j$. In this case,
$t^{v_1}_2(v)$,  $t^{v_2}_3(v)$, $t^{v_1}_4(v)$, $t^{v_2}_4(v)$, $t^{v_1}_5(v)$ and $t^{v_3}_5(v)$
are all zero for every $v \in Q$.
Finally, for every $i \in [5]$, we define
\[t^u_i := \max_{w \in S} \big(t^{u_1}_i(w) + t^{u_2}_i(w) + t^{u_3}_i(w)\big), \]
and
\[t^v_i := \max_{w \in Q} \big(t^{v_1}_i(w) + t^{v_2}_i(w) + t^{v_3}_i(w)\big). \]

For every $B = B^{[v_1v_2v_3]}_{u_1\mathyp u_2 \mathyp u_3} \in \cB$, the set $\{u_1,u_2,u_3\}$ consists
of at most $3$ vertices of $S$. Analogously, $\{v_1,v_2,v_3\}$ consists of at most $3$ vertices of $Q$.
Therefore, 
\[
\sum\limits_{\substack{B' \in \cB: \\ B\textrm{ and } B'\\ \cS\textrm{-intersect}}} \prob{B'} \; \le \;
\sum\limits_{i=1}^5
\prob{B'_i}\cdot3(t^u_i + t^v_i)
%
\mbox{\,,}
\]
where $B'_i \in \cB_i$ for $i \in \{1,\dots,5\}$.

In the following series of claims, we present a careful but most of the time easily
followable calculations, which will lead to bounds on the values of
$t^{u}_i$ and $t^{v}_i$ for $i\in\{1,\dots,5\}$. The bounds from the claims are
summarized in seven corollaries, which we will then put together and 
conclude that the sum above is at most $1/4$.

\begin{claim}
\label{cl1u13}
For every $u\in S$, $t^{u_1}_1(u) + t^{u_3}_1(u) \le \Delta_S (\Delta_S-1) (n-\ell)(n-\ell-1) k$.
\end{claim}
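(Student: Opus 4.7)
The plan is to unfold the definition so that $t^{u_1}_1(u)+t^{u_3}_1(u)$ counts the events $B^{[v_1v_2v_3]}_{u_1\mathyp u_2\mathyp u_3}\in\cB_1$ in which $u$ occupies one of the two leaf positions $u_1$ or $u_3$ of the underlying cherry. Because the convention $u_1<u_3$ fixes a canonical orientation on every element of $\cR(G)$, the two summands together cover, without double counting, exactly those events of $\cB_1$ whose cherry has all three vertices in $S$ and has $u$ as a leaf.

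I would then factor the count as a product (number of admissible cherries of $G$) $\times$ (number of ordered $c$-monochromatic triples in $Q^3$), since the two pieces of data are chosen independently. For the first factor, the number of cherries of $G$ in which a given vertex $u$ is a leaf equals $\sum_{u_2\in N_G(u)}(\deg_G(u_2)-1)$; since the events in question lie in $\cB_1$, both $u$ and its neighbor $u_2$ belong to $S$, so $\deg_G(u),\deg_G(u_2)\le \Delta_S$ and this sum is at most $\Delta_S(\Delta_S-1)$. For the second factor, I would reuse the argument already given in the excerpt for bounding $|\cC(c)|$: choose $v_1,v_2\in Q$ in at most $(n-\ell)(n-\ell-1)$ ways, and then local $k$-boundedness at $v_2$ leaves at most $k$ choices of $v_3$ with $c(v_1v_2)=c(v_2v_3)$, for a total of at most $(n-\ell)(n-\ell-1)k$ monochromatic triples supported in $Q$. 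Multiplying the two bounds yields the desired inequality.

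The only step that demands some care is the orientation bookkeeping, i.e., verifying that the sum $t^{u_1}_1(u)+t^{u_3}_1(u)$ matches the unoriented notion ``$u$ is a leaf of the cherry'' exactly once; this is immediate from the convention $u_1<u_3$. Everything else is a routine application of the degree bound $\Delta_S$ on $S$ and of the definition of local $k$-boundedness, with no Cauchy--Schwarz or further combinatorial trick needed at this stage (those will come into play only for the later claims bounding $t_i^u,t_i^v$ with $i\ge 2$).
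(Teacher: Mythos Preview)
Your proposal is correct and follows essentially the same approach as the paper: both arguments first bound the number of cherries in $S$ with $u$ as a leaf by $\Delta_S(\Delta_S-1)$ (choosing the middle vertex $u_2\in N_G(u)\cap S$ and then the other leaf), and then bound the number of ordered monochromatic triples in $Q$ by $(n-\ell)(n-\ell-1)k$ via local $k$-boundedness. The paper's version is phrased as a single sequential count introducing the auxiliary vertex $u'=\{u_1,u_3\}\setminus\{u\}$, whereas you explicitly factor the count into the two independent pieces, but the substance is identical.
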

\begin{proof}
Our aim is to upper bound the number of ways how to choose
$u_2, u', v_1, v_2$ and $v_3$ so that $B^{[v_1v_2v_3]}_{u_1\mathyp u_2 \mathyp u_3} \in \cB_1$,
where $u_1:=\min(u,u')$ and $u_3:=\max(u,u')$.
Note that this quantity is exactly equal to $t^{u_1}_1(u) + t^{u_3}_1(u)$.

Firstly, there are at most $\Delta_S$ ways how to choose $u_2\in S$. Once the
vertex $u_2$ is fixed, there are at most $\Delta_S-1$ ways how to choose the
remaining vertex $u' \in S$. 
Next, there are exactly $(n-\ell)(n-\ell-1)$ ways how
to choose the vertices $v_1 \in Q$ and $v_2 \in Q$. Finally, since the color of the edge
$v_2v_3$ should be the same as the color of $v_1v_2$, the vertex $v_3$ can be
chosen in at most $k$ ways.
\end{proof}

\begin{claim}
\label{cl1u2}
For every $u\in S$, $t^{u_2}_1(u) \le \frac12 \Delta_S (\Delta_S-1) (n-\ell)(n-\ell-1) k$.
\end{claim}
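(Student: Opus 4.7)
The plan is to bound $t^{u_2}_1(u)$ by a direct double count that mirrors the proof of Claim~\ref{cl1u13}, but accounts for the fact that $u$ now plays the role of the middle vertex of the cherry rather than a leaf. An event $B^{[v_1v_2v_3]}_{u_1\mathyp u_2\mathyp u_3}\in\cB_1$ with $u_2=u$ is specified by two independent pieces of data: a cherry $u_1\mathyp u\mathyp u_3$ in $G$ with both leaves in $S$, and a $c$-monochromatic triple $[v_1v_2v_3]\in\cC(c)$ with $v_1,v_2,v_3\in Q$.

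For the first factor, since $u\in S$ has $\deg_G(u)\le\Delta_S$, the number of ways to pick an unordered pair $\{u_1,u_3\}$ of neighbors of $u$ in $G$ (and then set $u_1<u_3$) is at most
\[
\binom{\deg_G(u)}{2}\le\binom{\Delta_S}{2}=\tfrac{1}{2}\Delta_S(\Delta_S-1).
\]
This is precisely where the factor of $1/2$ compared to Claim~\ref{cl1u13} comes from: there, the middle vertex $u_2\in S$ and the second leaf $u'\in S$ are each chosen in an \emph{ordered} fashion (yielding $\Delta_S(\Delta_S-1)$), whereas here the two leaves are picked as an unordered pair because of the convention $u_1<u_3$.

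For the second factor, we count $c$-monochromatic triples $[v_1v_2v_3]$ lying entirely in $Q$: there are at most $n-\ell$ choices of $v_2\in Q$, then at most $n-\ell-1$ choices of $v_1\in Q\setminus\{v_2\}$, and finally at most $k$ choices of $v_3$ with $c(v_2v_3)=c(v_1v_2)$ by local $k$-boundedness of $c$ at $v_2$. Multiplying the two factors yields the claimed bound
\[
t^{u_2}_1(u)\le\tfrac{1}{2}\Delta_S(\Delta_S-1)\cdot(n-\ell)(n-\ell-1)k.
\]
There is no real obstacle: the argument is purely a symmetry-aware version of the count in Claim~\ref{cl1u13}.
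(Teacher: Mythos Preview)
Your proof is correct and follows essentially the same approach as the paper's: both count the cherries with $u$ as the middle vertex by $\binom{\Delta_S}{2}$ and the $c$-monochromatic triples in $Q$ by $(n-\ell)(n-\ell-1)k$. Your added explanation of why the factor $\tfrac12$ appears relative to Claim~\ref{cl1u13} is a helpful elaboration but does not change the underlying argument.
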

\begin{proof}
There are at most $\Delta_S \choose 2$ options for choosing the pair $u_1$ and $u_3$ so that
$u_1u_2 \in E(G), u_2u_3 \in E(G)$ and $u_1 < u_3$.
Next, there are at most $(n-\ell)(n-\ell-1)k$ ways how to choose the vertices $v_1,v_2$ and $v_3$.
\end{proof}

Since $\Delta_S (\Delta_S-1) \le r^{3/4}$ and $k\le\frac n{Cr^{3/4}}$,
we conclude the following.
\begin{corollary}
\label{cor:u1}
For every $B_1 \in \cB_1$,
\[
t^{u}_1  \le \frac {3n}{2C(n-\ell-2)} \cdot \frac1{\prob{B_1}} 
\le \frac{3}{2C-4}\cdot\frac1{\prob{B_1}}
.\]
\end{corollary}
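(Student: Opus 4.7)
The corollary is essentially an algebraic repackaging of Claims~\ref{cl1u13} and~\ref{cl1u2}. My plan is to first combine those two claims pointwise: for each $w \in S$, adding the two bounds gives
\[
t^{u_1}_1(w) + t^{u_2}_1(w) + t^{u_3}_1(w) \;\le\; \tfrac{3}{2}\,\Delta_S(\Delta_S-1)\,(n-\ell)(n-\ell-1)\,k,
\]
and taking the maximum over $w$ yields the same bound on $t^u_1$.

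Next, I would apply the two structural inequalities already available: $\Delta_S(\Delta_S-1) \le r^{3/4}$ from~\eqref{eq:maxdegS} and $k \le n/(Cr^{3/4})$ from the setup of Theorem~\ref{thm:shearer}. The two $r^{3/4}$ factors cancel exactly, leaving
\[
t^u_1 \;\le\; \frac{3n(n-\ell)(n-\ell-1)}{2C}.
\]
Since $1/\prob{B_1} = (n-\ell)(n-\ell-1)(n-\ell-2)$ for every $B_1 \in \cB_1$, rewriting the right-hand side as $\frac{3n}{2C(n-\ell-2)} \cdot (n-\ell)(n-\ell-1)(n-\ell-2)$ immediately delivers the first displayed inequality.

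For the second inequality, I need $\frac{3n}{2C(n-\ell-2)} \le \frac{3}{2C-4}$, which after cross-multiplying is equivalent to the elementary condition $2n \ge C(\ell+2)$, i.e., $n \ge C(r^{1/4}+1)$. This follows from the global assumptions $n \ge 2C$ and $r \le (n/2C)^{4/3}$ fixed at the beginning of the proof of Theorem~\ref{thm:shearer}: writing $n = 2Ct^3$ with $t := (n/2C)^{1/3} \ge 1$, the inequality reduces to $2t^3 - t - 1 \ge 0$, which factors as $(t-1)(2t^2+2t+1) \ge 0$ and hence holds.

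There is no real obstacle here; the corollary is bookkeeping that repackages the combinatorial content of Claims~\ref{cl1u13} and~\ref{cl1u2} into a form directly compatible with~\eqref{eq:proper:localcond}. The mildly delicate step is verifying the clean constant $\frac{3}{2C-4}$, which uses the assumption $n \ge 2C$ tightly at $n=2C$ (where $t=1$); this is exactly why $C=\constSh$ is chosen to be comfortably large, and the analogous packaging will recur for the other classes $\cB_2,\dots,\cB_5$ in the calculations to follow.
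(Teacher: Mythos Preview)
Your proposal is correct and follows essentially the same approach as the paper: combine Claims~\ref{cl1u13} and~\ref{cl1u2}, substitute $\Delta_S(\Delta_S-1)\le r^{3/4}$ and $k\le n/(Cr^{3/4})$, and then verify the final numerical inequality. The only cosmetic difference is in that last step: the paper simply notes $\ell \le 2(n/2C)^{1/3} \le n/C$ and $2 \le n/C$ (hence $\ell+2 \le 2n/C$), whereas you substitute $t=(n/2C)^{1/3}$ and factor $2t^3-t-1=(t-1)(2t^2+2t+1)$; both arguments encode exactly the same bound and are tight at $n=2C$.
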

\noindent Note the last inequality follows from the estimates $\ell \le 2(n/2C)^{1/3} \le n/C$ and $2 \le n/C$.

\begin{claim}
\label{cl1v123}
For every vertex $v\in Q$, $t^{v_1}_1(v) + t^{v_2}_1(v) + t^{v_3}_1(v)  \le 3(n-\ell-1)kr$.
\end{claim}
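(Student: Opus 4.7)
The plan is to split $t^{v_1}_1(v)+t^{v_2}_1(v)+t^{v_3}_1(v)$ according to the position $j \in \{1,2,3\}$ in which $v$ appears. Recall that every event $B^{[v_1v_2v_3]}_{u_1\mathyp u_2\mathyp u_3} \in \cB_1$ is specified by a cherry $u_1\mathyp u_2\mathyp u_3 \in \cR(G)$ with $\{u_1,u_2,u_3\}\subseteq S$, together with a $c$-monochromatic ordered triple $[v_1v_2v_3]$ in $\cC(c)$ with $\{v_1,v_2,v_3\}\subseteq Q$. For each $j$, I would upper-bound $t^{v_j}_1(v)$ by the product of (i) the number of such cherries, which is trivially at most the total number $r$ of cherries in $G$, and (ii) the number of $c$-monochromatic ordered triples $[v_1v_2v_3]$ in $Q$ with $v_j=v$.

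For part (ii), local $k$-boundedness of $c$ controls the count as follows. If $v=v_2$, I first choose $v_1 \in Q \setminus \{v\}$, which can be done in at most $n-\ell-1$ ways; then the color $c(vv_1)$ appears on at most $k$ edges incident to $v$, so there are at most $k$ choices of $v_3$ with $c(vv_3)=c(vv_1)$. If instead $v=v_1$ (and symmetrically $v=v_3$), I would first pick the middle vertex $v_2 \in Q \setminus \{v\}$ in at most $n-\ell-1$ ways and then invoke local $k$-boundedness at $v_2$ to bound the number of admissible choices of the remaining endpoint by $k$. In each of the three cases, part (ii) is therefore at most $(n-\ell-1)k$.

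Multiplying parts (i) and (ii) and summing over $j \in \{1,2,3\}$ yields the desired bound $3r(n-\ell-1)k$. No serious obstacle is anticipated; the only subtle point is that in each of the three cases one must pick the right vertex of the triple first, so that the local $k$-boundedness hypothesis applies to a vertex whose incident colour class is the one actually being counted.
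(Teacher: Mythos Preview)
Your proposal is correct and follows essentially the same approach as the paper: bound each of $t^{v_1}_1(v)$, $t^{v_2}_1(v)$, $t^{v_3}_1(v)$ separately by $(n-\ell-1)kr$, counting first the $c$-monochromatic triple through $v$ (choosing the appropriate neighbour in $Q$ and then applying local $k$-boundedness at the middle vertex) and then the cherry in $G$ (at most $r$ choices). The paper's proof is essentially identical in structure and detail.
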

\begin{proof}
We show that each $t^{v_1}_1(v), t^{v_2}_1(v)$ and $t^{v_3}_1(v)$ is at most $(n-\ell-1)kr$.
If $v=v_2$, then there are $(n-\ell-1)$ ways how to choose $v_1$ and at most $k$ ways how to
choose $v_3$. On the other hand, if $v\in\{v_1,v_3\}$, then there are $(n-\ell-1)$ ways how
to choose $v_2$ and then at most $k$ ways how to choose the remaining vertex in $Q$. Finally, 
in all the cases there are at most $r$ choices for a cherry $u_1\mathyp u_2 \mathyp u_3$.
\end{proof}

Since $\ell \le n/C$ and ${3r^{1/4}} \le (n-\ell-2)$, we have an analogue
of Corollary~\ref{cor:u1} for bounding the value of $t^{v}_1$.
\begin{corollary}
\label{cor:v1}
For every $B_1 \in \cB_1$,
\[
t^{v}_1  \le 3(n-\ell-1)kr \le \frac{3n(n-\ell-1)r^{1/4}}C \le \frac 1{C-1} \cdot \frac1{\prob{B_1}}.
\]
\end{corollary}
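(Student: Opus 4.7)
The corollary is a chain of three inequalities, each of which should reduce to a short, essentially mechanical computation. The first inequality is the content of Claim~\ref{cl1v123}: by definition $t^v_1 = \max_{w \in Q}\bigl(t^{v_1}_1(w) + t^{v_2}_1(w) + t^{v_3}_1(w)\bigr)$, and the claim bounds this maximand uniformly by $3(n-\ell-1)kr$. The second inequality is in fact an equality that falls out of the parameter choice $k = n/(Cr^{3/4})$: substituting gives $kr = nr^{1/4}/C$, which converts $3(n-\ell-1)kr$ into exactly $3n(n-\ell-1)r^{1/4}/C$.

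The only link that requires a small calculation is the third. Since $B_1 \in \cB_1$ we have $\prob{B_1}^{-1} = (n-\ell)(n-\ell-1)(n-\ell-2)$, so after cancelling the common factor $(n-\ell-1)$ the target inequality reduces to
\[
(n-\ell)(n-\ell-2) \;\ge\; \frac{3nr^{1/4}(C-1)}{C}.
\]
I would bound the two factors on the left separately. The estimate $\ell \le n/C$, which was flagged in the remark before the corollary and which follows from $\ell = 2r^{1/4} \le 2(n/2C)^{1/3} \le n/C$ using the hypotheses $r \le (n/2C)^{4/3}$ and $n \ge 2C$, gives $n-\ell \ge n(C-1)/C$. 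The parallel estimate $3r^{1/4} \le n-\ell-2$, also recorded in that remark and derived from the same two hypotheses, gives $n-\ell-2 \ge 3r^{1/4}$. Multiplying these two bounds produces precisely $3nr^{1/4}(C-1)/C$ on the right-hand side, completing the chain.

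The whole argument is essentially bookkeeping; the only mildly delicate point — and one the preceding corollary has already set up — is checking that the numerical slack from $n \ge 2C = 1120$ is large enough to absorb the $-2$ in $n-\ell-2$ into the $3r^{1/4}$ bound. Since $r^{1/4} \le (n/2C)^{1/3} \le n/(2C)$ in this regime, the inequality $5r^{1/4} + 2 \le n$ that this amounts to is comfortable, so no further work is required beyond invoking the two estimates the authors have already singled out.
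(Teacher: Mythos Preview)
Your proposal is correct and follows essentially the same approach as the paper: the paper simply records the two estimates $\ell \le n/C$ and $3r^{1/4} \le n-\ell-2$ before stating the corollary, and your argument makes explicit exactly how these combine (via $n-\ell \ge n(C-1)/C$ and $n-\ell-2 \ge 3r^{1/4}$) to yield the final inequality after cancelling the common factor $(n-\ell-1)$.
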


\begin{claim}
\label{cl2u23}
For every $u \in S$, $t^{u_2}_2(u) + t^{u_3}_2(u) \le 2 \ell \Delta_S (n-\ell)k$.
\end{claim}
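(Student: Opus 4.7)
The plan is to count events in $\cB_2$ containing a fixed $u \in S$, splitting according to whether $u$ plays the role of $u_2$ or of $u_3$, and bound each of $t^{u_2}_2(u)$ and $t^{u_3}_2(u)$ by $\ell \Delta_S (n-\ell) k$. The key observation that avoids over-counting is that any event in $\cB_2$ has $u_1 \in L$ and $u_2, u_3 \in S$, and since the vertices of $G$ are ordered so that $L = \{1, \dots, \ell\}$ and $S = \{\ell+1, \dots, n\}$, the constraint $u_1 < u_3$ is automatically satisfied. Moreover, $v_1 = f_1(u_1)$ is already fixed by the predetermined map $f_1$, so only $v_2$ and $v_3$ are free parameters on the $Q$-side.

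For $t^{u_2}_2(u)$, with $u = u_2$: one chooses $u_1 \in L \cap N_G(u)$ in at most $|L| = \ell$ ways, and then $u_3 \in S \cap N_G(u)$ in at most $\deg_G(u) \le \Delta_S$ ways. For $t^{u_3}_2(u)$, with $u = u_3$: one first picks the middle vertex $u_2 \in S \cap N_G(u)$ in at most $\Delta_S$ ways and then $u_1 \in L \cap N_G(u_2)$ in at most $\ell$ ways. In both cases, the cherry in $G$ contributes at most $\ell \Delta_S$ options.

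Having fixed the cherry, the $(v_2, v_3)$ count is identical: $v_2 \in Q$ contributes $n - \ell$ options, and then $v_3 \in Q$ must satisfy $c(v_1 v_2) = c(v_2 v_3)$, which gives at most $k$ options by the local $k$-boundedness of $c$. Multiplying gives $\ell \Delta_S (n-\ell) k$ for each of the two terms, and summing yields the claimed bound $2\ell \Delta_S (n-\ell) k$. There is no genuine obstacle here; the whole argument is a routine enumeration, and the only subtle point is recognizing that the ordering $u_1 < u_3$ comes for free from $u_1 \in L$, $u_3 \in S$.
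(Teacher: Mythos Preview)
Your proof is correct and follows essentially the same counting argument as the paper: bound each of $t^{u_2}_2(u)$ and $t^{u_3}_2(u)$ by $\ell\,\Delta_S\,(n-\ell)\,k$, choosing the cherry vertices in at most $\ell\cdot\Delta_S$ ways and then using that $v_1=f_1(u_1)$ is fixed, $v_2\in Q$ has $n-\ell$ options, and $v_3$ has at most $k$ options by local $k$-boundedness. The paper's version is marginally sharper in the $u=u_2$ case (using $\Delta_S-1$ instead of $\Delta_S$ for $u_3$, since $u_1$ already occupies one neighbor of $u$), but this makes no difference for the stated bound.
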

\begin{proof}
This time, we show that both the value of $t^{u_2}_2(u)$ and the value of
$t^{u_3}_2(u)$ are at most $ \ell \Delta_S (n-\ell)k$.

If $u=u_2$, then there are at most $\ell$ choices for the vertex $u_1 \in L$
and at most $(\Delta_S-1)$ choices for the vertex $u_3\in S$. If $u=u_3$, then
the vertex $u_1 \in L$ can be chosen in at most $\ell$ ways and the vertex $u_2$ in at most $\Delta_S$ ways.
Next, there are $(n-\ell)$ choices for the vertex $v_2$.
Since the vertex $v_1 \in P$ is determined by the choice of the map
$f_1$, there are at most $k$ choices for the vertex $v_3 \in Q$.
\end{proof}

The inequality~(\ref{eq:maxdegS}) implies that $\Delta_S \le r^{3/8}+1$, which is at most $2r^{3/8}$.
Since $\ell = 2r^{1/4}$, we yield our next corollary.
\begin{corollary}
\label{cor:u2}
For every $B_2 \in \cB_2$,
\[
t^{u}_2 
\le 8r^{5/8} (n-\ell)k \le \frac {8n(n-\ell)}{Cr^{1/8}}
\le \frac {8n}{C(n-\ell-1)} \cdot\frac1{\prob{B_2}}
\le \frac8{C-2}\cdot\frac1{\prob{B_2}}
.\]
\end{corollary}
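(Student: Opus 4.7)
My plan is to derive Corollary~\ref{cor:u2} as a routine chain of substitutions starting from Claim~\ref{cl2u23}. The key first observation is that every event in $\cB_2$ has $u_1 \in L$, so $t^{u_1}_2(w) = 0$ for every $w \in S$; therefore $t^u_2 = \max_{w \in S}\big(t^{u_2}_2(w) + t^{u_3}_2(w)\big) \le 2\ell \Delta_S (n-\ell) k$ by the claim.

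From this starting point I substitute the three structural bounds already available: $\Delta_S \le 2r^{3/8}$, coming from~\eqref{eq:maxdegS} by solving the quadratic inequality $\Delta_S(\Delta_S-1) \le r^{3/4}$ and absorbing the additive $+1$ using $r \ge 16$; the definition $\ell = 2r^{1/4}$; and the hypothesis $k \le n/(C r^{3/4})$. Combining these gives $t^u_2 \le 8r^{5/8}(n-\ell)k \le 8n(n-\ell)/(Cr^{1/8})$, which are the first two inequalities claimed.

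The third inequality drops the harmless factor $r^{-1/8} \le 1$ and then rewrites using the identity $1/\prob{B_2} = (n-\ell)(n-\ell-1)$, factoring out $\tfrac{8n}{C(n-\ell-1)}$. The final inequality $\tfrac{8n}{C(n-\ell-1)} \le \tfrac{8}{C-2}$ is equivalent to $C(\ell + 1) \le 2n$, which I obtain from the bound $\ell \le n/C$ (already justified after Corollary~\ref{cor:u1} from $r \le (n/2C)^{4/3}$) together with $C \le n$, guaranteed by the global assumption $n \ge 2C$. No step should present a real obstacle; the only bookkeeping points are keeping the direction of each inequality straight and invoking $r \ge 16$ to smooth out the estimate on $\Delta_S$.
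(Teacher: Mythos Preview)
Your proposal is correct and follows essentially the same route as the paper: the paper simply notes before the corollary that $\Delta_S \le r^{3/8}+1 \le 2r^{3/8}$ and $\ell = 2r^{1/4}$, and the remaining inequalities are the same straightforward substitutions (using $k = n/(Cr^{3/4})$, $\prob{B_2}^{-1} = (n-\ell)(n-\ell-1)$, and the bounds $\ell \le n/C$, $n \ge 2C$) that you spell out.
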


\begin{claim}
\label{cl2v2}
For every $v\in Q$, $t^{v_2}_2(v) \le \ell k \sqrt{2 \Delta_G r}$.
\end{claim}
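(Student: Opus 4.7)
The plan is to fix the vertex $v\in Q$ and enumerate the events $B^{[v_1v_2v_3]}_{u_1\mathyp u_2 \mathyp u_3} \in \cB_2$ satisfying $v_2 = v$ by first choosing $u_1 \in L$. The decisive point is that once $u_1$ is fixed, $f_1$ already forces $v_1 = f_1(u_1) \in P$, so the color $\alpha := c(v_1 v)$ is completely determined before any randomness of $f_2$ is used.

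I would then proceed in three quick steps. First, there are at most $\ell$ choices for $u_1 \in L$. Second, since we need $c(v\, v_3) = \alpha$, the locally $k$-bounded assumption on $c$ leaves at most $k$ candidates for $v_3 \in Q$. Third, the pair $(u_2, u_3)$ must form a cherry $u_1\mathyp u_2\mathyp u_3$ in $G$ in which $u_1$ is one of the two leaves (the ordering condition $u_1<u_3$ is automatic since $u_1\in L$, $u_3\in S$); by Lemma~\ref{lem:leafcher}, the number of such cherries is at most $\sqrt{2r\,\deg_G(u_1)}$. Multiplying these contributions and summing over $u_1$,
\[
t^{v_2}_2(v) \;\le\; \sum_{u_1 \in L} k \cdot \sqrt{2r\,\deg_G(u_1)} \;\le\; \ell k \sqrt{2\Delta_G\, r},
\]
where the final inequality uses the termwise bound $\deg_G(u_1) \le \Delta_G$.

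The argument is essentially mechanical once Lemma~\ref{lem:leafcher} is invoked, and I do not anticipate a genuine obstacle. The one substantive insight is that Lemma~\ref{lem:leafcher} must be used in place of the naive cherry-count $\Delta_G(\Delta_G-1)$: the latter would yield only a bound of order $\ell k \Delta_G^2$, which is substantially weaker and would be insufficient for the subsequent verification of the local lemma condition~(\ref{eq:proper:localcond}). A Cauchy--Schwarz-type refinement of the sum $\sum_{u_1\in L}\sqrt{\deg_G(u_1)}$ is not needed here, since the crude termwise bound already matches the claim.
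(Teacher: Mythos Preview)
Your proposal is correct and follows essentially the same approach as the paper's proof. The only cosmetic difference is that the paper first chooses $v_1 \in P$ (at most $\ell$ options) and then recovers $u_1 = f_1^{-1}(v_1)$, whereas you first choose $u_1 \in L$ and then set $v_1 = f_1(u_1)$; since $f_1$ is a bijection between sets of size $\ell$, the two enumerations are equivalent, and both then invoke Lemma~\ref{lem:leafcher} and the local $k$-boundedness in the same way.
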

\begin{proof}
There at most $\ell$ choices for the vertex $v_1 \in P$
and then at most $k$ choices for the vertex $v_3 \in Q$. 
Since the vertex $u_1 \in L$ is determined by $f_1$, Lemma~\ref{lem:leafcher}
implies that the set of two vertices $\{u_2,u_3\} \subseteq S$ can be chosen in at most $\sqrt{2 \Delta_G r}$ ways.
\end{proof}

\begin{claim}
\label{cl2v3}
For every $v \in Q$, $t^{v_3}_2(v) \le (n-\ell-1)k \sqrt{2 \Delta_G r}$.
\end{claim}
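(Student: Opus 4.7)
The plan is to mimic the argument of Claim~\ref{cl2v2}, with $v_3 = v$ fixed this time instead of $v_2 = v$; the only structural change is that ``the free choice in $Q$'' and ``the choice constrained by local $k$-boundedness'' swap roles between $v_1$ and $v_2$.

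Specifically, I would enumerate the events $B^{[v_1v_2v_3]}_{u_1\mathyp u_2 \mathyp u_3} \in \cB_2$ with $v_3 = v$ by choosing the parameters in the order $v_2$, $v_1$, $u_1$, $(u_2,u_3)$. First, since $v_2 \in Q$ and $v_2 \neq v_3 = v$, there are at most $(n - \ell - 1)$ choices for $v_2$. Second, once $v_2$ is fixed, the color $c(v_2 v)$ is determined, and since $c$ is locally $k$-bounded at $v_2$, there are at most $k$ vertices $v_1$ with $c(v_1 v_2) = c(v_2 v_3)$. Third, membership in $\cB_2$ forces $v_1 = f_1(u_1)$, so $u_1 = f_1^{-1}(v_1) \in L$ is uniquely determined by $v_1$. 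Finally, the number of cherries $u_1 \mathyp u_2 \mathyp u_3$ in $G$ with $u_1$ as one of the two leaves is, by Lemma~\ref{lem:leafcher}, at most $\sqrt{2r \deg_G(u_1)} \le \sqrt{2 \Delta_G r}$; imposing the extra constraint $\{u_2, u_3\} \subseteq S$ can only shrink this count.

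Multiplying the four bounds yields the target inequality $t^{v_3}_2(v) \le (n - \ell - 1) k \sqrt{2 \Delta_G r}$. I do not anticipate any real obstacle: the argument is essentially a mirror of Claim~\ref{cl2v2}. The only subtlety worth flagging is that local $k$-boundedness has to be applied at the middle vertex $v_2$ of the $c$-monochromatic triple (not at an endpoint), which is why it is natural to select $v_2$ before $v_1$ in the enumeration; trying to fix $v_1$ first and then bound the number of valid $v_2$ directly would not give a clean factor of $k$, since the condition $c(v_1 v_2) = c(v_2 v)$ with both endpoints fixed is not controlled by the hypothesis alone.
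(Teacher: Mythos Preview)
Your proposal is correct and follows essentially the same approach as the paper's proof: choose $v_2 \in Q$ in $(n-\ell-1)$ ways, then use local $k$-boundedness at $v_2$ to pin $v_1 \in P$ to at most $k$ choices, recover $u_1 = f_1^{-1}(v_1)$, and finally bound the number of completing cherries via Lemma~\ref{lem:leafcher}. Your remark about why one must fix $v_2$ before $v_1$ (so that the $k$-boundedness applies at the middle vertex) is exactly the right observation.
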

\begin{proof}
The vertex $v_2\in Q$ can be chosen in $(n-\ell-1)$ ways and the vertex 
$v_1 \in P$ in at most $k$ ways.
Then as in the previous claim, there are at most $\sqrt{2\Delta_G r}$ choices for
$\{u_2,u_3\} \subseteq S$.
\end{proof}

The choice of the parameters yields that $\ell \le (n-\ell-1)$ and $\sqrt{2\Delta_G r} \le 2 r^{3/4}$.
\begin{corollary}
\label{cor:v2}
for every $B_2 \in \cB_2$,
\[
 t^{v}_2 \le 4(n-\ell-1)k r^{3/4} \le \frac{4n}{C(n-\ell)} \cdot\frac1{\prob{B_2}}
\le
\frac 4{C-1} \cdot\frac1{\prob{B_2}}
.\]
\end{corollary}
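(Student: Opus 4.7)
The plan is to assemble the bound directly from the two preceding claims together with the already established parameter estimates. First I would recall the observation stated just before the corollary that $t^{v_1}_2(w)=0$ for every $w\in Q$, so the definition of $t^v_2$ collapses to $\max_{w\in Q}\bigl(t^{v_2}_2(w)+t^{v_3}_2(w)\bigr)$. Plugging in Claims~\ref{cl2v2} and~\ref{cl2v3} and using $\ell = 2r^{1/4}\le n-\ell-1$ (which holds comfortably under our standing assumptions $n\ge 2C$ and $r\le(n/2C)^{4/3}$), I obtain an upper bound of the form $2(n-\ell-1)\,k\sqrt{2\Delta_G r}$.

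Next I would eliminate $\Delta_G$ via~(\ref{eq:maxdeg}), which gives $\sqrt{2\Delta_G r}\le 2r^{3/4}$, producing the first displayed inequality $t^v_2\le 4(n-\ell-1)kr^{3/4}$. For the middle inequality I substitute $k=n/(Cr^{3/4})$, which cancels $r^{3/4}$ cleanly and leaves $4n(n-\ell-1)/C$. Since $\prob{B_2}=1/\bigl((n-\ell)(n-\ell-1)\bigr)$, writing $n-\ell-1$ as $(n-\ell)(n-\ell-1)/(n-\ell)$ lets me re-express the bound as $\frac{4n}{C(n-\ell)}\cdot \frac{1}{\prob{B_2}}$, which is exactly the middle term of the claimed chain.

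For the last inequality I use the estimate $\ell\le n/C$ that was already invoked in the derivation of Corollary~\ref{cor:u1} and remains valid here (since $\ell=2r^{1/4}\le 2(n/2C)^{1/3}\le n/C$). This yields $n/(n-\ell)\le C/(C-1)$, and multiplying by $4/C$ gives the final bound $4/(C-1)$. The whole corollary is thus a routine packaging step: no new combinatorial idea is needed beyond the two claims themselves, and I don't anticipate any genuine obstacle — the only thing to be careful about is keeping the factor $(n-\ell-1)$ intact through the substitution so that it cancels properly against the denominator of $\prob{B_2}$.
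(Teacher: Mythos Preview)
Your proposal is correct and follows exactly the paper's approach: the paper summarizes the derivation in the single sentence ``The choice of the parameters yields that $\ell \le (n-\ell-1)$ and $\sqrt{2\Delta_G r} \le 2 r^{3/4}$,'' and you have simply spelled out the arithmetic (summing Claims~\ref{cl2v2} and~\ref{cl2v3}, applying~(\ref{eq:maxdeg}), substituting $k=n/(Cr^{3/4})$, and using $\ell\le n/C$) in full detail.
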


\begin{claim}
\label{cl3u13}
For every $u \in S$, $t^{u_1}_3(u) + t^{u_3}_3(u) \le \ell (\Delta_G-1) (n-\ell) k$.
\end{claim}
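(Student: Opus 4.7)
The plan is to mimic the double-counting used in Claim~\ref{cl1u13}, with the twist that in $\cB_3$ the middle vertex $u_2$ of the cherry lies in $L$, so its image $v_2 = f_1(u_2) \in P$ is already pinned down by $f_1$. Concretely, I would upper bound $t^{u_1}_3(u) + t^{u_3}_3(u)$ by the number of tuples $(u_2, u', v_1, v_3)$ for which, setting $u_1 := \min(u,u')$, $u_3 := \max(u,u')$ and $v_2 := f_1(u_2)$, the event $B^{[v_1 v_2 v_3]}_{u_1\mathyp u_2\mathyp u_3}$ belongs to $\cB_3$; this count equals $t^{u_1}_3(u) + t^{u_3}_3(u)$ exactly, since each event of $\cB_3$ with $u\in\{u_1,u_3\}$ is specified by such a tuple.

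On the graph side, first choose $u_2 \in L$ with $u u_2 \in E(G)$: since $|L|=\ell$, this gives at most $\ell$ options. Then pick the other neighbor $u' \in S\setminus\{u\}$ of $u_2$, yielding at most $\Delta_G - 1$ options (one neighbor of $u_2$ is $u$ itself, and the remaining ones number at most $\deg_G(u_2)-1\le\Delta_G-1$).

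On the coloring side, $v_2 = f_1(u_2)$ is already fixed, so there is no freedom in that coordinate. The vertex $v_1 \in Q$ can be chosen in $(n-\ell)$ ways, and the constraint $[v_1 v_2 v_3] \in \cC(c)$ is equivalent to $c(v_2 v_3) = c(v_1 v_2)$; the local $k$-boundedness of $c$ at $v_2$ then leaves at most $k$ choices for $v_3 \in Q$. Multiplying the four factors yields the desired bound $\ell(\Delta_G-1)(n-\ell)k$. There is no real obstacle here; the key observation is just that forcing $u_2 \in L$ removes one factor of $(n-\ell-1)$ as compared with Claim~\ref{cl1u13} (because $v_2$ is fixed), while simultaneously capping the number of choices for $u_2$ at $\ell$ instead of $\Delta_S$, and forcing us to use $\Delta_G-1$ rather than $\Delta_S-1$ for the other leaf since $u_2 \in L$ can have neighbors of high degree.
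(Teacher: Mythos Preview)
Your proof is correct and follows essentially the same approach as the paper: choose $u_2 \in L$ (at most $\ell$ ways), then the other leaf $u'$ among the neighbors of $u_2$ (at most $\Delta_G - 1$ ways), observe $v_2 = f_1(u_2)$ is forced, pick $v_1 \in Q$ in $(n-\ell)$ ways, and use the local $k$-boundedness at $v_2$ to bound the choices of $v_3$ by $k$. The only cosmetic difference is that you explicitly note the constraint $uu_2 \in E(G)$ when selecting $u_2$, whereas the paper leaves this implicit and simply bounds by $|L|=\ell$.
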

\begin{proof}
First choose the vertex $u_2 \in L$; there are at most $\ell$ choices for that. The remaining
vertex in $G$, i.e., the vertex from $\{u_1,u_3\} \setminus \{u\}$, can be chosen in at most $(\Delta_G-1)$ ways.
Next, the vertex $v_2 \in P$ is given by $f_1(u_2)$. There are $(n-\ell)$ choices for $v_1 \in Q$,
and finally, at most $k$ choices for $v_3 \in Q$.
\end{proof}

\begin{claim}
\label{cl3v13}
For every $v\in Q$, $t^{v_1}_3(v) + t^{v_3}_3(v) \le 4\sqrt{r\ell}\cdot(\Delta_G -1)k$.
\end{claim}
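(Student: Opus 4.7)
The plan is to bound $t^{v_1}_3(v)$ and $t^{v_3}_3(v)$ symmetrically, each by $2(\Delta_G-1)\sqrt{r\ell}\,k$, exploiting the fact that in $\cB_3$ the two coordinates $v_1, v_3$ of the image triple play interchangeable roles: both lie in $Q$ and are images of leaves of the cherry that live in $S$, while $v_2 \in P$ is the image of the middle vertex $u_2 \in L$.

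I would fix $v \in Q$ and first estimate $t^{v_3}_3(v)$ by decomposing an event $B^{[v_1v_2v_3]}_{u_1\mathyp u_2 \mathyp u_3} \in \cB_3$ with $v_3 = v$ into three choices: (i) the vertex $u_2 \in L$, which pins down $v_2 = f_1(u_2)$; (ii) an unordered pair $\{u_1, u_3\} \subseteq N_G(u_2) \cap S$, where the convention $u_1 < u_3$ forces $u_3$ to be the leaf mapped to $v$; and (iii) a vertex $v_1 \in Q$ with $c(v_1 v_2) = c(v_2 v_3)$. Step (ii) contributes at most $\binom{\deg_G(u_2)}{2}$ pairs, and step (iii) contributes at most $k$ vertices by the $k$-boundedness of $c$ at $v_2$.

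Summing over $u_2 \in L$ and bounding $\binom{\deg_G(u_2)}{2} \le \tfrac12 (\Delta_G-1)\deg_G(u_2)$ gives
\[
t^{v_3}_3(v) \;\le\; \frac{(\Delta_G-1)\,k}{2}\sum_{u_2 \in L}\deg_G(u_2).
\]
To close the argument, I would apply Lemma~\ref{lem:edges} with $T = L$, of size $\ell = 2r^{1/4}$: it yields $e(L, G) \le \max\{4\ell, 2\sqrt{r\ell}\}$, whence $\sum_{u_2 \in L}\deg_G(u_2) \le 2\,e(L, G) \le 2\max\{4\ell, 2\sqrt{r\ell}\}$. The standing assumption $r \ge 16$ forces $r \ge 8r^{1/4} = 4\ell$, so $2\sqrt{r\ell} \ge 4\ell$ and the maximum is $2\sqrt{r\ell}$. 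This gives $\sum_{u_2 \in L}\deg_G(u_2) \le 4\sqrt{r\ell}$, hence $t^{v_3}_3(v) \le 2(\Delta_G-1)\sqrt{r\ell}\,k$.

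The bound for $t^{v_1}_3(v)$ is obtained by identical counting: now $u_1$ is the leaf mapped to $v$, and the free vertex yielding the monochromatic triple at $v_2$ is $v_3 \in Q$. Adding the two bounds gives the claimed inequality. The only non-trivial point is confirming that the Cauchy--Schwarz term in Lemma~\ref{lem:edges} dominates the linear term $4\ell$, which is exactly why the lower bound $r \ge 16$ set at the beginning of the proof of Theorem~\ref{thm:shearer} is needed here.
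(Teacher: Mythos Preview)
Your proof is correct and follows essentially the same approach as the paper: both arguments bound each of $t^{v_1}_3(v)$ and $t^{v_3}_3(v)$ separately by $2(\Delta_G-1)\sqrt{r\ell}\,k$, invoking Lemma~\ref{lem:edges} with $T=L$ (together with the check that $r\ge 16$ makes $2\sqrt{r\ell}$ the dominant term), and using the local $k$-boundedness to count the remaining vertex of the monochromatic triple. The only cosmetic difference is the order of selection---the paper first picks the edge $u_1u_2$ between $S$ and $L$ (at most $2\sqrt{r\ell}$ choices) and then the other leaf $u_3$ (at most $\Delta_G-1$ choices), whereas you pick $u_2\in L$ first and then the unordered pair $\{u_1,u_3\}$, bounding $\sum_{u_2\in L}\binom{\deg_G(u_2)}{2}\le \tfrac12(\Delta_G-1)\sum_{u_2\in L}\deg_G(u_2)\le 2(\Delta_G-1)\sqrt{r\ell}$; the two computations are equivalent.
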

\begin{proof}
We show that both $t^{v_1}_3(v)$ and $t^{v_3}_3(v)$
are at most $2\sqrt{r\ell}\cdot(\Delta_G -1)k$.
Suppose $v=v_1$ (the case $v=v_3$ is symmetric).
First observe since $r\ge16$, it holds that
$8r^{1/4} = 4\ell \le 2\sqrt{r\ell} = \sqrt{8}\cdot r^{5/8}$. Therefore,
Lemma~\ref{lem:edges} applies with $T:=L$ and yields that a pair of vertices $u_1
\in S$ and $u_2 \in L$ which is connected by an edge can be chosen in at most
$2 \sqrt{r \ell}$ ways. After the vertices $u_1$ and $u_2$ are chosen, there are at
most $(\Delta_G-1)$ choices for the vertex $u_3 \in S$. Since $v_2 = f_1(u_2)$,
the vertex $v_3 \in Q$ can be chosen in at most $k$ ways.
\end{proof}

Since $(\Delta_G-1)^2 \le 2r$, we conclude the following corollary.
\begin{corollary}
\label{cor:uv3}
For every $B_3 \in \cB_3$,
\[
t^{u}_3 \le 2\sqrt{2} \cdot r^{3/4}(n-\ell)k \le \frac{3n(n-\ell)}C \le \frac3{C-2}\cdot\frac1{\prob{B_3}}
\]
and
\[
t^{v}_3  \le 8r^{9/8} \cdot k = \frac{8n r^{3/8}}C \le \frac1{C-1} \cdot\frac1{\prob{B_3}}
.\]
\end{corollary}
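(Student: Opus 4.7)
The plan is to observe that Corollary~\ref{cor:uv3} is a direct substitution-and-rearrangement consequence of Claims~\ref{cl3u13} and~\ref{cl3v13}, after recalling which of the counters $t^{u_j}_3$, $t^{v_j}_3$ vanish by definition. Concretely, for any event in $\cB_3$ the middle vertex $u_2$ of the cherry lies in $L$ (not in $S$) and its image $v_2$ lies in $P$ (not in $Q$); hence $t^{u_2}_3(w) = 0$ for every $w \in S$ and $t^{v_2}_3(w) = 0$ for every $w \in Q$, so the definitions of $t^u_3$ and $t^v_3$ collapse to the sums $t^{u_1}_3(w) + t^{u_3}_3(w)$ and $t^{v_1}_3(w) + t^{v_3}_3(w)$ already bounded in the two claims.

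For the $t^u_3$ bound I would then just plug in $\ell = 2r^{1/4}$ and $\Delta_G - 1 \le \sqrt{2r}$, which is inequality~(\ref{eq:maxdeg}), into $\ell (\Delta_G-1)(n-\ell)k$; this gives exactly $2\sqrt{2}\cdot r^{3/4}(n-\ell)k$. Substituting $k = n/(C r^{3/4})$ absorbs the $r^{3/4}$ factor and, since $2\sqrt{2} < 3$, yields $3n(n-\ell)/C$. Finally, because $\prob{B_3}^{-1} = (n-\ell)(n-\ell-1)$ for $B_3 \in \cB_3$, the last inequality is equivalent to $n(C-2) \le C(n-\ell-1)$, i.e. $C(\ell+1) \le 2n$; under the standing assumptions $n \ge 2C$ and $r \le (n/2C)^{4/3}$ one has $\ell+1 \le 2(n/2C)^{1/3} + 1$, and a short check shows this is bounded by $2n/C$.

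The $t^v_3$ inequality is handled identically: $\sqrt{r\ell} = \sqrt{2}\, r^{5/8}$ and $\Delta_G - 1 \le \sqrt{2r}$ turn the bound of Claim~\ref{cl3v13} into $4\sqrt{2}\cdot r^{5/8} \cdot \sqrt{2r}\cdot k = 8 r^{9/8} k$, and substituting $k = n/(Cr^{3/4})$ gives $8n r^{3/8}/C$, which is the claimed middle expression. For the last step one wants $8n r^{3/8}/C \le (n-\ell)(n-\ell-1)/(C-1)$; using $r^{3/8} \le (n/2C)^{1/2}$ (which is $r \le (n/2C)^{4/3}$ raised to the power $3/8$) reduces this to an elementary inequality in $n$ alone, comfortably satisfied for $n \ge 2C = 1120$.

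In short, no new combinatorial input is needed — the work is purely algebraic. The only place that requires care is the very last inequality in each line, where one must verify that the slack between the constants $C$ and $C-2$ (resp.\ $C-1$) accommodates the $(n-\ell)/(n-\ell-1)$ error; this is where the choice $C = \constSh$ together with the assumption $n \ge 2C$ enters, and I expect it to be the only step where a conservative reader might want to pause.
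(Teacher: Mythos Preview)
Your proposal is correct and follows essentially the same route as the paper: the paper's entire argument consists of the remark ``Since $(\Delta_G-1)^2 \le 2r$'' before the corollary and the note ``the last inequality holds since $8r^{3/8} \le (n-\ell-1)$'' after it, which are exactly the substitutions and endgame inequalities you spell out. The only cosmetic difference is that for the final step of the $t^v_3$ bound the paper packages the estimate as $8r^{3/8} \le n-\ell-1$ (together with $\ell \le n/C$) rather than reducing to an inequality in $n$ alone, but this is the same computation.
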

\noindent Note the last inequality holds since $8r^{3/8} \le (n-\ell-1)$.

\begin{claim}
\label{cl4u3}
For every $u\in S$, $t^{u_3}_4(u) \le \ell(\ell-1)k$.
\end{claim}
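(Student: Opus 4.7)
The plan is a direct count. Fix $u \in S$ and consider the events $B^{[v_1v_2v_3]}_{u_1\mathyp u_2 \mathyp u_3} \in \cB_4$ for which $u_3 = u$. By the definition of $\cB_4$, the vertices $u_1$ and $u_2$ both lie in $L$, and the global constraint on the class $\cB$ then forces $v_1 = f_1(u_1)$ and $v_2 = f_1(u_2)$. Thus, once we specify the ordered pair $(u_1,u_2)$, both $v_1$ and $v_2$ are determined, and the only remaining freedom is the choice of $v_3 \in Q$.

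For the ordered pair $(u_1,u_2)$ I would use the crude bound $|L|(|L|-1) = \ell(\ell-1)$, which follows just from $u_1, u_2 \in L$ being distinct; no exploitation of the cherry structure $u_1u_2, u_2u_3 \in E(G)$ is needed, and indeed it would not help since vertices of $L$ may have very large degree. Once $v_1$ and $v_2$ are fixed, the color $c(v_1v_2)$ is determined, and local $k$-boundedness of $c$ at $v_2$ yields at most $k$ vertices $v_3 \in Q$ with $c(v_2 v_3) = c(v_1v_2)$. Multiplying gives the claimed bound $\ell(\ell-1)k$.

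No real obstacle arises here: this is in fact the easiest of the five $\cB_i$ counts, precisely because two of the three cherry vertices are already pinned down by $f_1$, so almost all of the $K_n$-side freedom has been eliminated. The pattern is the same as in Claims \ref{cl1u13}--\ref{cl3v13}, only shorter.
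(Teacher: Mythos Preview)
Your proof is correct and matches the paper's own argument essentially line for line: choose $u_2 \in L$ ($\ell$ ways), choose $u_1 \in L \setminus \{u_2\}$ ($\ell-1$ ways), then $v_1 = f_1(u_1)$ and $v_2 = f_1(u_2)$ are determined, and local $k$-boundedness at $v_2$ leaves at most $k$ choices for $v_3$. Your additional remark that the cherry structure is not exploited here (and need not be, since $L$-vertices may have large degree) is accurate and is implicit in the paper's count as well.
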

\begin{proof}
There are at most $\ell$ choices for the vertex $u_2 \in L$ and at most
$(\ell-1)$ choices for the vertex $u_1 \in L$.
Since the vertices $\{v_1,v_2\} \subseteq P$ are determined
by $f_1$, there are at most $k$ choices for the vertex $v_3 \in Q$.
\end{proof}

\begin{claim}
\label{cl4v3}
For every $v\in Q$, $t^{v_3}_4(v) \le 2\sqrt{r\ell} \cdot k$.
\end{claim}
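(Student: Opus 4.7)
The plan is to reduce the count to a product of two combinatorial quantities: the number of edges between $L$ and $S$ (which will serve as the $u_2u_3$-edge), and, for each such edge, the number of admissible choices of $u_1$ constrained by a single-color equation.

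First observe that since $u_1, u_2 \in L$, the images $v_1 = f_1(u_1)$ and $v_2 = f_1(u_2)$ are forced once $u_1$ and $u_2$ are chosen, so an event in $\cB_4$ with $v_3 = v$ is fully determined by a triple $(u_1,u_2,u_3) \in L \times L \times S$ satisfying $u_1u_2, u_2u_3 \in E(G)$ together with the color constraint $c(f_1(u_1)f_1(u_2)) = c(f_1(u_2)v)$. This frames the count as a sum over pairs $(u_2,u_3)$ of the number of admissible $u_1$.

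To bound the number of pairs $(u_2,u_3)$ with $u_2 \in L$, $u_3 \in S$, and $u_2u_3 \in E(G)$, I would apply Lemma~\ref{lem:edges} with $T := L$: the number of edges of $G$ with at least one endpoint in $L$ is at most $\max\{4\ell,\, 2\sqrt{r\ell}\}$. Plugging in $\ell = 2r^{1/4}$ and using $r \ge 16$ gives $4\ell \le 2\sqrt{r\ell}$, so in particular the number of $L$--$S$ edges is bounded by $2\sqrt{r\ell}$.

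Next, for each such pair $(u_2,u_3)$, the vertex $v_2 := f_1(u_2)$ is fixed, and with it the color $c(v_2 v)$. By local $k$-boundedness of $c$ at $v_2$, this color appears on at most $k$ edges incident to $v_2$, so there are at most $k$ choices for a vertex $v_1$ with $c(v_1v_2) = c(v_2v)$, and hence at most $k$ admissible values for $u_1 = f_1^{-1}(v_1)$. Multiplying the two bounds yields $t^{v_3}_4(v) \le 2\sqrt{r\ell} \cdot k$.

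I do not anticipate any real obstacle: the argument is completely parallel to the one in Claim~\ref{cl3v13}. The only mild tactical choice is to bound the number of candidate $u_1$'s by $k$ via local $k$-boundedness at $v_2$, rather than using the stronger fact that $P$ is properly colored (which would give a bound of $1$); the $k$-bounded route is enough for the downstream LLL computation and keeps the bookkeeping uniform across the classes.
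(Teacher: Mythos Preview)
Your proof is correct and follows essentially the same route as the paper: first bound the number of $L$--$S$ edges $u_2u_3$ by $2\sqrt{r\ell}$ via Lemma~\ref{lem:edges}, then note $v_2=f_1(u_2)$ is forced, then use local $k$-boundedness to get at most $k$ choices for $v_1$, which in turn determines $u_1$ via $f_1^{-1}$. Your side remark about the proper coloring of $P$ yielding a bound of $1$ instead of $k$ is also correct but, as you say, unnecessary for the application.
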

\begin{proof}
By Lemma~\ref{lem:edges} applied with $T:=L$, there are at most $2\sqrt{r\ell}$ choices for 
the edge $u_2u_3$ so that $u_2\in L$ and $u_3 \in S$. By definition, $v_2=f_1(u_2)$, hence the vertex $v_1 \in P$
can be chosen in at most $k$ ways.
Since $f_1$ is a bijection, the choice of $v_1$ uniquely determines the vertex $u_1$. 
\end{proof}

This time, we conclude the following.
\begin{corollary}
\label{cor:uv4}
For every $B_4\in\cB_4$,
\[
t^{u}_4 \le
4\sqrt{r}\cdot k 
\le \frac{4n}{Cr^{1/4}(n-\ell)} \cdot\frac1{\prob{B_4}}
\le \frac4{C-1}\cdot\frac1{\prob{B_4}}
\]
and
\[
t^{v}_4 
\le 2\sqrt{2}\cdot r^{5/8}\cdot k
\le \frac{3n}{Cr^{1/8}} \le \frac{3n}{C(n-\ell)} \cdot\frac1{\prob{B_4}}
\le \frac{3}{C-1} \cdot\frac1{\prob{B_4}}
.
\]
\end{corollary}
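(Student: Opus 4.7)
The plan is to combine Claims~\ref{cl4u3} and~\ref{cl4v3} with the structural observation that, for any event in $\cB_4$, only $u_3$ lies in $S$ and only $v_3$ lies in $Q$. As was already noted when $t^u_i$ and $t^v_i$ were first defined, this forces $t^{u_1}_4(w) = t^{u_2}_4(w) = 0$ for every $w \in S$ and $t^{v_1}_4(w) = t^{v_2}_4(w) = 0$ for every $w \in Q$. Consequently $t^u_4 = \max_{w \in S} t^{u_3}_4(w)$ and $t^v_4 = \max_{w \in Q} t^{v_3}_4(w)$, which the two claims bound respectively by $\ell(\ell-1)k$ and $2\sqrt{r\ell}\cdot k$.

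I would then simply substitute $\ell = 2r^{1/4}$. This gives $\ell(\ell-1)\le\ell^2 = 4\sqrt r$, yielding the first inequality in the chain for $t^u_4$, and $\sqrt{r\ell} = \sqrt 2\cdot r^{5/8}$, yielding the first inequality in the chain for $t^v_4$ (after absorbing $2\sqrt 2 \le 3$ in the next step). The middle expressions of the two chains come from inserting $k = n/(Cr^{3/4})$ and then multiplying and dividing by $(n-\ell) = 1/\prob{B_4}$; for the $t^v_4$ chain one passes through the intermediate value $3n/(Cr^{1/8})$ and uses $r^{1/8}\ge 1$ to compare it with $3n/(C(n-\ell))\cdot(n-\ell)$.

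Finally, for the last inequality in each chain I would reuse the estimate $\ell \le n/C$ that was already established in the derivation of Corollary~\ref{cor:u1}. It gives $n-\ell \ge (C-1)n/C$, hence $n/(n-\ell) \le C/(C-1)$, which (together with the trivial bound $r \ge 1$ to absorb the $r^{1/4}$ factor in the denominator) produces the claimed factors $4/(C-1)$ and $3/(C-1)$. There is no genuine obstacle here: the corollary is a direct consolidation of Claims~\ref{cl4u3} and~\ref{cl4v3} via the parameter choices $\ell = 2r^{1/4}$ and $k = n/(Cr^{3/4})$, so the entire argument amounts to substitution and careful bookkeeping of constants.
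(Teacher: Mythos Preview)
Your proposal is correct and follows exactly the approach implicit in the paper: the corollary is not given an explicit proof there, but is derived precisely as you describe, by reading off $t^u_4=\max_w t^{u_3}_4(w)$ and $t^v_4=\max_w t^{v_3}_4(w)$ from Claims~\ref{cl4u3} and~\ref{cl4v3}, then substituting $\ell=2r^{1/4}$, $k=n/(Cr^{3/4})$, $\prob{B_4}=1/(n-\ell)$, and finally using $\ell\le n/C$ to obtain $n/(n-\ell)\le C/(C-1)$.
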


\begin{claim}
\label{cl5u2}
For every $u\in S$, $t^{u_2}_5(u) \le 2.5 \ell (\ell -1) \cdot kr^{1/4}$.
\end{claim}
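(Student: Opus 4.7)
The plan is the same kind of direct counting used for the earlier classes, specialised to the geometry of $\cB_5$. Fix a vertex $u \in S$; the goal is to bound the number of events $B^{[v_1v_2v_3]}_{u_1\mathyp u_2\mathyp u_3} \in \cB_5$ with $u_2 = u$. The defining feature of $\cB_5$ is that both leaves $u_1, u_3$ of the cherry lie in $L$, so the entire cherry through $u$ is encoded by choosing an unordered pair of neighbours of $u$ that sits inside $L$.

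First, since $\{u_1,u_3\} \subseteq L$ and $|L| = \ell$, the global convention $u_1 < u_3$ allows at most $\binom{\ell}{2}$ such pairs, independently of the actual degree of $u$ in $G$. Once the pair is fixed, the vertices $v_1 = f_1(u_1)$ and $v_3 = f_1(u_3)$ are pinned down by the prescribed bijection $f_1 : L \to P$, so the only remaining freedom is the choice of $v_2 \in Q$ subject to $[v_1v_2v_3] \in \cC(c)$, i.e., $c(v_1v_2) = c(v_2v_3)$. This is exactly the quantity controlled by the second part of Lemma~\ref{lem:step1}: for any two vertices of $P$, the set of admissible $v_2$ has size at most $5kr^{1/4}$. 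Multiplying the two bounds gives $t^{u_2}_5(u) \le \tfrac12\ell(\ell-1)\cdot 5kr^{1/4} = 2.5\,\ell(\ell-1)\cdot kr^{1/4}$, which is the claimed inequality.

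There is no real obstacle in this step; the hard work is already hidden in the preparatory Lemma~\ref{lem:step1}, which was arranged precisely so that cherries whose endpoints are both mapped into $P$ can be counted cheaply through the bounded monochromatic co-degree between pairs in $P$. The only small point to be careful about is the factor of $\tfrac12$: using $\binom{\ell}{2}$ rather than the ordered count $\ell(\ell-1)$ is what correctly reflects the $u_1 < u_3$ convention, and this $\tfrac12$ is exactly what produces the stated constant $2.5$.
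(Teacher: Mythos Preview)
Your proof is correct and follows essentially the same approach as the paper: count the at most $\binom{\ell}{2}$ choices for $\{u_1,u_3\}\subseteq L$, note that $v_1$ and $v_3$ are then determined by $f_1$, and bound the number of admissible $v_2$ by $5kr^{1/4}$ via Lemma~\ref{lem:step1}. Your additional commentary on the $\tfrac12$ factor and the role of Lemma~\ref{lem:step1} is accurate but not needed for the argument.
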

\begin{proof}
There are at most $\ell \choose 2$ ways how to choose the set $\{u_1,u_3\} \subseteq L$.
This also determines the vertices $v_1 = f_1(u_1)$ and $v_3=f_1(u_3)$.
By the choice of the set $P$, there are at most $5 kr^{1/4}$ possibilities for the vertex $v_2 \in Q$.
\end{proof}

\begin{claim}
\label{cl5v2}
For every $v\in Q$, $t^{v_2}_5(v) \le 2\sqrt{r\ell} \cdot k$.
\end{claim}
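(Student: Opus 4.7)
The plan mirrors the proof of Claim~\ref{cl4v3}. Fix $v \in Q$. An event $B^{[v_1v_2v_3]}_{u_1\mathyp u_2\mathyp u_3} \in \cB_5$ contributing to $t^{v_2}_5(v)$ is specified by a cherry $u_1\mathyp u_2\mathyp u_3$ in $G$ with $u_2 \in S$ and $u_1,u_3 \in L$; the values $v_1 = f_1(u_1)$ and $v_3 = f_1(u_3)$ are then forced by $f_1$, so the only remaining condition is that $[v_1\,v\,v_3]$ be $c$-monochromatic. I plan to count such events by first picking an $L$-to-$S$ edge of the cherry, and then using local $k$-boundedness at $v$ to control the other endpoint.

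For the first step, I would apply Lemma~\ref{lem:edges} with $T := L$ to bound the number of edges of $G$ with at least one endpoint in $L$ by $\max\{4\ell, 2\sqrt{r\ell}\}$. Since $\ell = 2r^{1/4}$ and the proof of Theorem~\ref{thm:shearer} has already reduced to the case $r \ge 16$, we have $2\sqrt{r\ell} \ge 4\ell$, so there are at most $2\sqrt{r\ell}$ such edges; in particular at most $2\sqrt{r\ell}$ choices for the edge $u_1 u_2$ with $u_1 \in L$ and $u_2 \in S$. Fixing this edge determines $v_1 = f_1(u_1)$.

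For the second step, $v_3$ must satisfy $c(v_1\,v) = c(v\,v_3)$, and local $k$-boundedness of $c$ at $v$ restricts this to at most $k$ vertices $v_3 \in V(K_n)$, hence to at most $k$ within $P$. Given such a $v_3 \in P$, the vertex $u_3 = f_1^{-1}(v_3) \in L$ is forced, and the remaining cherry condition $u_2 u_3 \in E(G)$ acts only as a further restriction. Multiplying yields $t^{v_2}_5(v) \le 2\sqrt{r\ell} \cdot k$, as claimed. The only mild subtlety is ensuring $r \ge 4\ell$ so that Lemma~\ref{lem:edges} gives the $\sqrt{r\ell}$-type bound rather than the $4\ell$-type bound; this is guaranteed by the standing hypothesis $r \ge 16$. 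Note that each cherry in fact contributes two $L$-to-$S$ edges, so this enumeration double-counts every event, but the weaker bound stated in the claim is sufficient for the subsequent estimates.
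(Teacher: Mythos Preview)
Your proof is correct and follows essentially the same route as the paper: choose the $L$--$S$ edge $u_1u_2$ using Lemma~\ref{lem:edges} (at most $2\sqrt{r\ell}$ choices), read off $v_1=f_1(u_1)$, then use local $k$-boundedness at $v$ to bound the choices of $v_3\in P$ and hence $u_3$. Your explicit verification that $r\ge 16$ forces the $2\sqrt{r\ell}$ branch of Lemma~\ref{lem:edges}, and your remark about the harmless overcount, are fine additions but not needed for the argument.
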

\begin{proof}
First, Lemma~\ref{lem:edges} yields that there are at most $2\sqrt{r\ell}$ choices for 
the edge $u_1u_2$ with $u_1\in L$ and $u_2 \in S$. This also determines the vertex
$v_1 = f_1(u_1)$. Finally, the vertex $v_3 \in P$ can be then chosen in at most $k$ ways,
which uniquely determines the vertex $u_3 \in L$.
\end{proof}

Our final corollary is the following.
\begin{corollary}
\label{cor:uv5}
For every $B_5\in\cB_5$,
\[t^{u}_5 \le 10r^{3/4}k \le \frac{10n}{C(n-\ell)}\cdot \frac1{\prob{B_5}}
\le\frac{10}{C-1}\cdot \frac1{\prob{B_5}}
\]
and
\[
t^{v}_5 \le 2\sqrt{2} \cdot r^{5/8} \cdot k \le \frac{3n}{Cr^{1/8}}
\le \frac{3}{C-1} \cdot \frac1{\prob{B_5}}
.
\]
\end{corollary}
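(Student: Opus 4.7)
The plan is to read off Corollary~\ref{cor:uv5} as an essentially mechanical consequence of Claims~\ref{cl5u2} and~\ref{cl5v2}, by substituting the parameter choices $\ell = 2r^{1/4}$ and $k = n/(Cr^{3/4})$ and carefully tracking constants. The only conceptual step is to explain why, for events in $\cB_5$, the quantities $t^u_5$ and $t^v_5$ reduce to the single-index maxima already estimated.

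Here is the outline. First I would recall the defining condition for $\cB_5$: the vertex $u_2$ lies in $S$ while $u_1, u_3 \in L$, and correspondingly $v_2 \in Q$ while $v_1, v_3 \in P$. Consequently, for every $w \in S$ the values $t^{u_1}_5(w)$ and $t^{u_3}_5(w)$ vanish, so $t^u_5 = \max_{w \in S} t^{u_2}_5(w)$; similarly $t^v_5 = \max_{w \in Q} t^{v_2}_5(w)$. So it suffices to invoke Claims~\ref{cl5u2} and~\ref{cl5v2} and simplify.

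For the $u$-bound I would plug in $\ell(\ell-1) \le \ell^2 = 4 r^{1/2}$ into Claim~\ref{cl5u2} to obtain $t^u_5 \le 10 r^{3/4} k$. Since $k = n/(Cr^{3/4})$ this equals $10n/C$, and since $\prob{B_5} = 1/(n-\ell)$ one rewrites $10n/C = (10n/(C(n-\ell))) \cdot (1/\prob{B_5})$, yielding the middle expression. The final inequality $10n/(C(n-\ell)) \le 10/(C-1)$ is equivalent to $n \ge C\ell$, which follows from the bound $\ell \le 2(n/2C)^{1/3} \le n/C$ already used after Corollary~\ref{cor:u1}.

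For the $v$-bound, Claim~\ref{cl5v2} together with $\sqrt{r\ell} = \sqrt{2}\,r^{5/8}$ gives $t^v_5 \le 2\sqrt{2}\, r^{5/8} k$. Substituting $k = n/(Cr^{3/4})$ turns $r^{5/8}k$ into $n/(Cr^{1/8})$, and $2\sqrt{2} \le 3$ yields the middle term $3n/(Cr^{1/8})$. The last inequality $3n/(Cr^{1/8}) \le 3(n-\ell)/(C-1)$ is equivalent to $n(C-1)/C \le r^{1/8}(n-\ell)$, which holds because $r \ge 16$ (so $r^{1/8} \ge 1$) and $n - \ell \ge n(C-1)/C$ by $\ell \le n/C$. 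I do not expect any genuine obstacle here: every inequality reduces to the standing hypotheses $r \le (n/2C)^{4/3}$, $n \ge 2C$, $r \ge 16$, and the explicit value $C = 560$, which comfortably absorbs the small numerical slack ($2\sqrt{2} < 3$, $2.5 \cdot 4 = 10$, etc.).
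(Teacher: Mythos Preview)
Your proposal is correct and follows exactly the approach implicit in the paper: the corollary is stated without proof there, and your write-up simply fills in the substitutions $\ell = 2r^{1/4}$, $k = n/(Cr^{3/4})$, and $\prob{B_5} = 1/(n-\ell)$ into Claims~\ref{cl5u2} and~\ref{cl5v2}, together with the standing bounds $\ell \le n/C$ and $r \ge 16$. There is nothing to add.
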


Corollaries~\ref{cor:u1}-\ref{cor:uv5} 
imply that 
\[
 \sum\limits_{\substack{B' \in \cB: \\ B\textrm{ and } B'\\ \cS\textrm{-intersect}}} \prob{B'}
 \le 3 \cdot \frac{25}{2C-4} + 3\cdot \frac{26}{C-1} 
 \qquad \textrm{for every } B \in \cB\mbox{.}
\]
If $C = \constSh$, then the sum above is equal to $\expShLLL < 1/4$.
Therefore, all the conditions in~(\ref{eq:proper:localcond}) are satisfied and the proof is now finished.
\end{proof}


We continue our exposition with a proof of Theorem~\ref{thm:rbshearer}, which seeks 
rainbow copies of graphs $G$ with few cherries in globally bounded colorings $c$ of $K_n$.
This time, our task is to find such a copy of $G$ in $c$ that does contain neither
a monochromatic cherry, nor a monochromatic pair of disjoint edges.
Since a globally $k$-bounded coloring is also locally $k$-bounded, it is enough
to modify the proof of Theorem~\ref{thm:shearer} by adding to the set of bad events
those that take care of all the monochromatic pairs of disjoint edges. 
As it turned out, this changes the upper bound on $k$ only by a constant factor.


\begin{proof}[Proof of Theorem~\ref{thm:rbshearer}]
Most of the proof goes along the same lines as the proof of Theorem~\ref{thm:shearer}.
Let $C:=\constShRB$ and $k:=\frac{n}{C r^{3/4}}$. 
Again, if $n < 2C$ or $r > (n/2C)^{4/3}$, the statement of the theorem is trivial.
We may assume $r\ge16$ since for $r \le 15$ the statement follows from Theorem~\ref{thm:bkp:2} (note
that $n \ge 100$, $C=42\cdot6^2$, and if $r \le 15$, then the maximum degree of $G$ is at most $6$).
Furthermore, let $V(G) = V(K_n) = [n]$, and assume the vertices of $V(G)$ are in
descending order according to their degrees.
Lemma~\ref{lem:edges} and the fact that $r\le n^{4/3}$ imply that 
$e(G) \le nr^{1/8}$.

As in the proof of Theorem~\ref{thm:shearer}, let $\Delta_G$ be the maximum
degree of $G$.  It follows that $\Delta_G \le 2\sqrt{r}$.  Let $P \subseteq
V(K_n)$ be the rainbow complete subgraph of $K_n$ of size $\ell :=
2r^{1/4}$ given by Lemma~\ref{lem:step1} for $c$, $r$ and $k$. We define $Q:=V(K_n)
\setminus P$.  On the other hand, let $L$ be the set of the first $\ell$
vertices of $G$, $S := V(G) \setminus L$ and $\Delta_S := \max_{u \in S}
\deg_G(u)$.  It holds that $\Delta_S \le 2r^{3/8}$.

The way how we find a rainbow copy of $G$ in a globally $k$-bounded coloring is
analogous to the way we have found a properly colored copy of $G$ in
a locally $k$-bounded coloring.
First, let $f_1: L \to P$ be an arbitrary bijection
and $f_2: S \to Q$ be a bijection chosen uniformly at random.
Next, let $f:=f_1 \cup f_2$ and let $f(G)$ denote the 
copy of $G$ in $K_n$ given by $f$. Our aim is to show
that Theorem~\ref{thm:luszekely} and Lemma~\ref{lem:wLLL} yield
that with a non-zero probability $f(G)$ is rainbow.

Recall from the proof of Theorem~\ref{thm:shearer} that $u_1\mathyp u_2\mathyp
u_3$ denotes a cherry in $G$ with middle vertex $u_2$ and endpoints
$u_1,u_3$ such that $u_1 < u_3$, and $\cR(G)$ is the set of all such cherries in $G$.
Also recall that for $v_1, v_2, v_3 \in V(K_n)$, the triple $[v_1 v_2 v_3]$ is
$c$-monochromatic if $c(v_1 v_2) = c(v_2 v_3)$ and $\cC(c)$ is the set of all
$c$-monochromatic triples.

In order to show that $f(G)$ is not only properly colored but rainbow,
apart from controlling the cherries we also need to guarantee there are no two disjoint
edges of the same color. This motivates the following definitions.
We write $(u_1u_2)(u_3u_4)$ to denote two disjoint
edges $u_1u_2 \in E(G)$ and $u_3u_4 \in E(G)$ such that $u_1 < u_2$, $u_3<u_4$ and $u_1 < u_3$.
Let $\cR'(G)$ be the set of all such pairs of disjoint edges $(u_1u_2)(u_3u_4)$ in $G$.
Analogously, for every $v_1, v_2, v_3, v_4 \in V(K_n)$, the quadruple $[v_1v_2v_3v_4]$ is
$c$-monochromatic if $c(v_1v_2) = c(v_3v_4)$, and we denote the set of all
$c$-monochromatic quadruples by $\cC'(G)$.

This time, our aim is to show that with positive probability the bijection $f$
is such that for every cherry $u_1\mathyp u_2\mathyp u_3 \in \cR(G)$ 
it holds that $[f(u_1)f(u_2)f(u_3)] \notin \cC(c)$,
and for every $(u_1u_2)(u_3u_4) \in \cR'(G)$
it holds that $[f(u_1)f(u_2)f(u_3)f(u_4)] \notin \cC'(c)$.
The choice of $f_1$ implies that we need to check only
the cherries $u_1\mathyp u_2 \mathyp u_3$ and the disjoint pairs of edges $(u_1u_2)(u_3u_4)$ 
that satisfy $\{u_1,u_2,u_3\} \cap S \neq \emptyset$ and $\{u_1,u_2,u_3,u_4\} \cap S \neq \emptyset$, respectively.

As in the proof of Theorem~\ref{thm:shearer}, for $u_1\mathyp u_2 \mathyp u_3 \in \cR(G)$ and $[v_1v_2v_3]\in \cC(c)$,
we denote the event $\bigwedge\limits_{i\in[3]} [f(u_i)=v_i]$
by $B^{[v_1v_2v_3]}_{u_1\mathyp u_2 \mathyp u_3}$.
We define $\cB$ to be the set of all events $B^{[v_1v_2v_3]}_{u_1\mathyp u_2 \mathyp u_3}$
such that
\begin{itemize}
\item $u_1\mathyp u_2 \mathyp u_3 \in \cR(G)$ and $[v_1v_2v_3]\in \cC(c)$,
\item $\left\{u_1,u_2,u_3\right\} \cap S \neq \emptyset$,
\item $\forall i \in \{1,2,3\}: u_i \in S \iff v_i \in Q$, and
\item $\forall i \in \{1,2,3\}: u_i \in L \Longrightarrow f_1(u_i) = v_i$.
\end{itemize}
Similarly, for $(u_1u_2)(u_3u_4) \in \cR'(G)$ and $[v_1v_2v_3v_4]\in \cC'(c)$,
let $B^{[v_1v_2v_3v_4]}_{(u_1u_2)(u_3u_4)}$
be the event $\bigwedge\limits_{i\in[4]} [f(u_i)=v_i]$.
Finally, let $\cB'$ be the set of all events $B^{[v_1v_2v_3v_4]}_{(u_1u_2)(u_3u_4)}$ such that
\begin{itemize}
\item $(u_1u_2)(u_3u_4) \in \cR'(G)$ and $[v_1v_2v_3v_4]\in \cC'(c)$,
\item $\left\{u_1,u_2,u_3,u_4\right\} \cap S \neq \emptyset$,
\item $\forall i \in \{1,2,3,4\}: u_i \in S \iff v_i \in Q$, and
\item $\forall i \in \{1,2,3,4\}: u_i \in L \Longrightarrow f_1(u_i) = v_i$.
\end{itemize}

Since the globally $k$-bounded coloring $c$ is indeed also locally $k$-bounded,
Claims~\ref{cl1u13}-\ref{cl5v2} from the proof of Theorem~\ref{thm:shearer}
apply again.
In order to upper bound the number of events $B'\in \cB$ that
intersect a given event  $B^{[v_1v_2v_3]}_{u_1\mathyp u_2 \mathyp u_3} \in \cB$
or $B^{[v_4v_5v_6v_7]}_{(u_4u_5)(u_6u_7)} \in \cB'$,
it is enough to apply these claims for vertices $u\in\{u_1,u_2,u_3\} \cap S$ and $v\in\{v_1,v_2,v_3\} \cap Q$,
or $u\in\{u_4,u_5,u_6,u_7\} \cap S$ and $v\in\{v_4,v_5,v_6,v_7\} \cap Q$, respectively.
In all the possible cases, there are at most $4$~choices for such a vertex.
Therefore, Corollaries~\ref{cor:u1}-\ref{cor:uv5} yield that

\begin{equation}
 \sum\limits_{\substack{B' \in \cB: \\ B\textrm{ and } B'\\ \cS\textrm{-intersect}}} \prob{B'}
 \le 4 \cdot \frac{25}{2C-4} + 4\cdot \frac{26}{C-1} 
 \qquad \textrm{for every } B \in \cB \cup \cB'\mbox{.}
\label{eq:rbcherrybound}
\end{equation}

It remains to analyze how many events from $\cB'$
a fixed event $B \in \cB \cup \cB'$ can $\cS$-intersect.
We start with splitting the events $B^{[v_1v_2v_3v_4]}_{u_1\mathyp u_2, u_3\mathyp u_4} \in \cB'$ into 
five classes $\cB_6,\dots,\cB_{10}$ based on how their sets $\{u_1,u_2,u_3, u_4\}$ intersect the set $S$:
\begin{itemize}
\item If $\{u_1,u_2,u_3,u_4\} \subseteq S$, then $B^{[v_1v_2v_3v_4]}_{u_1\mathyp u_2, u_3\mathyp u_4} \in \cB_6$.
\item If $\{u_2, u_3,u_4\} \subseteq S$ and $u_1 \in L$, then $B^{[v_1v_2v_3v_4]}_{u_1\mathyp u_2, u_3\mathyp u_4} \in \cB_7$.
\item If $\{u_3,u_4\} \subseteq S$ and $\{u_1,u_2\} \subseteq L$, then $B^{[v_1v_2v_3v_4]}_{u_1\mathyp u_2, u_3\mathyp u_4} \in \cB_8$.
\item If $\{u_2,u_4\} \subseteq S$ and $\{u_1,u_3\} \subseteq L$, then $B^{[v_1v_2v_3v_4]}_{u_1\mathyp u_2, u_3\mathyp u_4} \in \cB_9$.
\item If $|\{u_2,u_4\} \cap S|=1$ and $\{u_1,u_3\} \subseteq L$, then $B^{[v_1v_2v_3v_4]}_{u_1\mathyp u_2, u_3\mathyp u_4} \in \cB_{10}$;
\end{itemize}
see also Figure~\ref{fig:events_edges}.
The fact that the classes $\cB_6,\dots,\cB_{10}$ split the whole
set $\cB'$ follows 
because if $u_i \in L$ for some $i\in [4]$, then $u_1 \in L$,
and also if $u_4 \in L$, then $u_3 \in L$.
It holds that
\begin{align*}
& \prob{B}=\frac1{(n-\ell)(n-\ell-1)(n-\ell-2)(n-\ell-3)}\quad & \textrm{for any } & B\in\cB_6,\\
& \prob{B}=\frac1{(n-\ell)(n-\ell-1)(n-\ell-2)}\quad & \textrm{for any } & B\in\cB_7,\\
& \prob{B}=\frac1{(n-\ell)(n-\ell-1)}\quad & \textrm{for any } & B\in\cB_8 \cup \cB_9, \textrm{ and}\\
& \prob{B}=\frac1{(n-\ell)}\quad & \textrm{for any } & B\in\cB_{10}.
\end{align*}

\begin{figure}[htp]
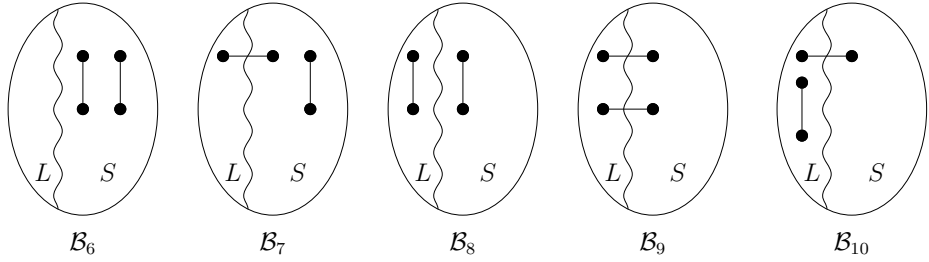

\begin{center}
\medskip
\foreach \n in {6,...,9}{ \includegraphics[scale=0.8,page=\n]{fig-lll_simple}\hfill}
{ \includegraphics[scale=0.8,page=10]{fig-lll_simple}}
\end{center}
\caption{The event classes $\cB_6,\dots,\cB_{10}$.}
\label{fig:events_edges}
\end{figure}

For every vertex $u \in S$ and two integers $i \in \{6,\dots,10\}$ and $j\in[4]$, let $t^{u_j}_i (u)$ be the number of events
$B^{[v_1v_2v_3v_4]}_{(u_1u_2)(u_3u_4)} \in \cB_i$ such that $u=u_j$. It immediately follows that all
$t^{u_1}_7(u)$, $t^{u_1}_8(u)$, $t^{u_2}_8(u)$, $t^{u_1}_9(u)$, $t^{u_3}_9(u)$, $t^{u_1}_{10}(u)$ and $t^{u_3}_{10}(u)$
are zero for every vertex $u \in S$.
Similarly, for every vertex $v \in Q$ and integers $i \in \{6,\dots,10\}$ and $j\in[4]$,
let $t^{v_j}_i (v)$ be the number of events $B^{[v_1v_2v_3v_4]}_{(u_1u_2)(u_3u_4)} \in \cB_i$ such that $v=v_j$.
Analogously to the previous case, the values of
$t^{v_1}_7(v)$, $t^{v_1}_8(v)$, $t^{v_2}_8(v)$, $t^{v_1}_9(v)$, $t^{v_3}_9(v)$, $t^{v_1}_{10}(u)$ and $t^{v_3}_{10}(v)$
are equal to~$0$ for all $v \in Q$.
Therefore, for every $B \in \cB \cup \cB'$ it holds that
\[
\sum\limits_{\substack{B' \in \cB': \\ B\textrm{ and } B'\\ \cS\textrm{-intersect}}} \prob{B'} \; \le \;
\sum\limits_{i=6}^{10}
{\prob{B'_i}}\cdot 4\left({t^u_i + t^v_i}\right)
\mbox{\,,}
\]
where $B'_i \in \cB_i$ for $i \in \{6,\dots,10\}$.

In order to finish the proof, we perform similar calculations as we did in the proof of Theorem~\ref{thm:shearer}
in order to give upper bounds on $t^{u_j}_i$ and $t^{v_j}_i$, where $i\in\{6,\dots,10\}$ and $j\in[4]$.
\begin{claim}
\label{cl6u1234}
For every $u \in S$, 
$\displaystyle\sum\limits_{j \in [4]} t^{u_j}_6(u) \le 2e(G) \cdot \Delta_S (n-\ell)(n-\ell-1) \cdot k$.
\end{claim}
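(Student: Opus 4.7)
My plan is to observe that since the four vertices $u_1,u_2,u_3,u_4$ of any element of $\cR'(G)$ are pairwise distinct, the sum $\sum_{j\in[4]} t^{u_j}_6(u)$ is simply the total number of events $B^{[v_1v_2v_3v_4]}_{(u_1u_2)(u_3u_4)} \in \cB_6$ with $u \in \{u_1,u_2,u_3,u_4\}$, counted without repetition. In the class $\cB_6$ all four $u_i$ lie in $S$, so none of them is pinned to a vertex of $P$ by $f_1$; consequently the graph-side data $(u_1u_2)(u_3u_4)\in \cR'(G)$ and the colour-side data $[v_1v_2v_3v_4]\in\cC'(c)$ can be counted independently, subject only to $v_1,v_2,v_3,v_4\in Q$.

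\textbf{Graph side.} The vertex $u$ is an endpoint of exactly one of the two disjoint edges. Its partner in that edge is a neighbour of $u$ in $G$, so there are at most $\deg_G(u)\le \Delta_S$ choices; the second edge can be any of at most $e(G)$ edges of $G$. The ordering imposed by $u_1<u_2$, $u_3<u_4$, $u_1<u_3$ is then forced by the unordered pair of edges, yielding at most $\Delta_S\cdot e(G)$ choices of $(u_1u_2)(u_3u_4)$ in total.

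\textbf{Colour side.} Here I use the hypothesis that $c$ is globally $k$-bounded. Choose $(v_1,v_2)\in Q\times Q$ with $v_1\ne v_2$ in at most $(n-\ell)(n-\ell-1)$ ways; the colour $c(v_1v_2)$ is now fixed. Since at most $k$ edges of $K_n$ carry a given colour, and each such edge gives rise to at most two ordered pairs $(v_3,v_4)$, there are at most $2k(n-\ell)(n-\ell-1)$ valid quadruples $[v_1v_2v_3v_4]$.

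Multiplying the two counts yields the claimed bound $2e(G)\cdot\Delta_S(n-\ell)(n-\ell-1)k$. I foresee no serious obstacle: the key observation is simply that membership in $\cB_6$ decouples the graph-side and colour-side counts, after which each count is a one-line application of the degree bound $\Delta_S$ and the global $k$-boundedness of $c$, respectively.
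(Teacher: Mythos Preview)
Your proof is correct and follows essentially the same approach as the paper: both arguments first choose a neighbour of $u$ (at most $\Delta_S$ ways) and then an arbitrary second edge of $G$ (at most $e(G)$ ways), observe that the ordering conventions force the labelling $(u_1u_2)(u_3u_4)$, and on the colour side choose $(v_1,v_2)\in Q^2$ freely and then use the global $k$-boundedness to bound the number of ordered pairs $(v_3,v_4)$ by $2k$. Your explicit remark that the sum over $j$ counts each event exactly once (since the $u_j$ are distinct) is a nice clarification that the paper leaves implicit.
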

\begin{proof}
A neighbor $u' \in S$ of $u$ can be chosen in at most $\Delta_S$ ways,
and then there are at most $e(G)$ choices for the edge $u''u'''$ disjoint from $uu'$.
Note that the relative order between $u,u',u''$ and $u'''$ uniquely determines how
these vertices correspond to $u_1,u_2,u_3$ and $u_4$. Next, the vertices $v_1$ and $v_2$
can be chosen in $(n-\ell)(n-\ell-1)$ ways. Finally, there are at most $k$ edges $v'v''$ 
with color $c(v_1v_2)$ and then we only need to decide whether $v_3=v'$ and $v_4=v''$,
or the other way around.
\end{proof}

\begin{claim}
\label{cl6v1234}
For every $v \in Q$, 
$\displaystyle\sum\limits_{j\in[4]} t^{v_j}_6(v) \le 4e(G)^2 (n-\ell-1)k$.
\end{claim}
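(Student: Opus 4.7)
The plan is to fix $v \in Q$ and $j \in [4]$, prove that $t^{v_j}_6(v) \le e(G)^2 (n-\ell-1) k$, and then sum over $j$. Since every event in $\cB_6$ has all four $u_i \in S$, the condition $u_i \in S \iff v_i \in Q$ forces all $v_i \in Q$; in particular, each coordinate $v_j$ is a vertex of $Q$, so bounding the contribution coordinate by coordinate is meaningful.

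For fixed $v$ and $j$, I would first choose the pair of disjoint edges $(u_1u_2)(u_3u_4) \in \cR'(G)$, ignoring the constraint that $u_j$ must be the preimage of $v$ (we are after an upper bound). An element of $\cR'(G)$ is, by its ordering conventions $u_1<u_2$, $u_3<u_4$, $u_1<u_3$, in bijection with an unordered pair of disjoint edges of $G$, so there are at most $\binom{e(G)}{2}\le e(G)^2/2$ such elements.

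Next, I would count the number of valid $c$-monochromatic quadruples $[v_1v_2v_3v_4] \in \cC'(c)$ such that $v_j = v$ and all four entries lie in $Q$. The vertex $v_j=v$ belongs to one of the two edges $\{v_1v_2,v_3v_4\}$; its partner in that edge may be any vertex of $Q\setminus\{v\}$, giving at most $(n-\ell-1)$ choices. Once this first edge is fixed, its color is determined, and by global $k$-boundedness there are at most $k$ edges of $K_n$ with that color available for the other edge of the quadruple, each of which admits $2$ orderings onto the remaining two indices. Hence the number of valid quadruples with $v_j=v$ is at most $2k(n-\ell-1)$.

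Multiplying the two counts yields $t^{v_j}_6(v) \le (e(G)^2/2)\cdot 2k(n-\ell-1) = e(G)^2 k(n-\ell-1)$, and summing over the four values of $j$ gives the claim. I do not expect any real obstacle: the only subtlety is to make sure that the ordering conventions of $\cR'(G)$ prevent multiple-counting of the same unordered pair of edges, and that the factor of $2$ arising from orienting the second edge of the quadruple is correctly bookkept against the $\binom{e(G)}{2}$ upper bound.
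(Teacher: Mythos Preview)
Your proposal is correct and follows essentially the same approach as the paper: both bound each $t^{v_j}_6(v)$ individually by choosing the unordered pair of disjoint edges in at most $\binom{e(G)}{2}$ ways, the partner of $v$ in at most $(n-\ell-1)$ ways, and the remaining monochromatic edge with its orientation in at most $2k$ ways. One minor remark: your phrase ``ignoring the constraint that $u_j$ must be the preimage of $v$'' is unnecessary, since for events in $\cB_6$ all $u_i$ lie in $S$ and no such constraint from $f_1$ is present; you are simply counting labelled events with $v_j=v$, exactly as intended.
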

\begin{proof}
This time we show that $t^{v_j}_6(v)$ is at most ${e(G)^2} (n-\ell-1)k$
for every $j\in[4]$. Without loss of generality, $v=v_1$. There are $(n-\ell-1)$
choices for $v_2$ and then, as in the previous claim, at most $2k$ choices for $v_3$ and $v_4$.
On the other hand, the total number of choices for the vertices $u_1,u_2,u_3$ and $u_4$
is at most $e(G) \choose 2$.
\end{proof}

The estimates $e(G) \le nr^{1/8}$ and $\Delta_S \le 2r^{3/8}$ yields the following.
\begin{corollary}
\label{cor:uv6}
For every $B_6 \in \cB_6$,
\[ 
t^u_6
\le 4 nr^{1/2} \cdot (n-\ell)(n-\ell-1)k \le
\frac{4n^2(n-\ell)(n-\ell-1)}{Cr^{1/4}} \le \frac 4{C-5} \cdot \frac 1{\prob{B_6}}\]
and
\[ 
t^v_6 \le 4n^2 r^{1/4} \cdot (n-\ell-1) k
\le \frac{4n^3(n-\ell-1)}{Cr^{1/2}} \le \frac{4}{C-6} \cdot \frac1{\prob{B_6}}.\]
\end{corollary}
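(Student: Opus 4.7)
The plan is to derive Corollary~\ref{cor:uv6} directly from Claims~\ref{cl6u1234} and~\ref{cl6v1234} by the same three-step template used in all the previous corollaries of this proof. First I would substitute the global upper bounds $e(G) \le n r^{1/8}$ (stated at the outset of the proof, from Lemma~\ref{lem:edges} combined with $r \le n^{4/3}$) and $\Delta_S \le 2 r^{3/8}$ (a consequence of~(\ref{eq:maxdegS})) into the right-hand sides of the two claims. This yields $t^u_6 \le 2 e(G) \Delta_S (n-\ell)(n-\ell-1) k \le 4 n r^{1/2}(n-\ell)(n-\ell-1)k$ and $t^v_6 \le 4 e(G)^2 (n-\ell-1) k \le 4 n^2 r^{1/4}(n-\ell-1)k$, which are the first inequalities in each displayed line of the corollary.

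Next I would substitute $k = n/(C r^{3/4})$ to collapse the remaining powers of $r$ in the numerator, producing the middle expressions $\frac{4 n^2 (n-\ell)(n-\ell-1)}{C r^{1/4}}$ and $\frac{4 n^3 (n-\ell-1)}{C r^{1/2}}$.

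Finally I would compare with $1/\prob{B_6} = (n-\ell)(n-\ell-1)(n-\ell-2)(n-\ell-3)$. After cancelling the common factors $(n-\ell)(n-\ell-1)$ or $(n-\ell-1)$, this reduces to checking $n^2/(C r^{1/4}) \le (n-\ell-2)(n-\ell-3)/(C-5)$ and $n^3/(C r^{1/2}) \le (n-\ell-1)(n-\ell-2)(n-\ell-3)/(C-6)$. Both follow from $\ell = 2 r^{1/4} \le n/C$ (using the standing assumption $r \le (n/2C)^{4/3}$) together with $r \ge 16$, so that $r^{1/4} \ge 2$ and $r^{1/2} \ge 4$; these absorb the extra powers of $n$ on the left, while the crude replacement of two (resp.\ three) of the $(n-\ell-i)$ factors by $n$ is exactly what is accounted for by the shift from $C$ to $C-5$ (resp.\ $C-6$) in the denominator.

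Conceptually there is no real obstacle here, since this is precisely the identical bookkeeping already performed in Corollaries~\ref{cor:u1} through~\ref{cor:uv5}; the only care needed is to match the number of factors that get crudely bounded below by $n$ with the integer shift in the denominator, and to keep the powers of $r^{1/4}$ on both sides aligned so that the $r\ge 16$ assumption suffices.
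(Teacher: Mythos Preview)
Your proposal is correct and matches the paper's approach exactly. The paper itself offers nothing beyond the one-line remark ``The estimates $e(G)\le nr^{1/8}$ and $\Delta_S\le 2r^{3/8}$ yield the following'' before stating the corollary, and your three-step template (substitute the bounds from Claims~\ref{cl6u1234}--\ref{cl6v1234}, plug in $k=n/(Cr^{3/4})$, then compare with $1/\prob{B_6}$ using $\ell\le n/C$ and $n\ge 2C$) is precisely the routine bookkeeping carried out in Corollaries~\ref{cor:u1}--\ref{cor:uv5}. One small remark: the factors $r^{1/4}\ge 2$ and $r^{1/2}\ge 4$ are helpful slack but not actually essential here---the shifts $C\to C-5$ and $C\to C-6$ already absorb the replacement of the missing $(n-\ell-i)$ factors by $n$ on their own, once you use $\ell+j\le (j{+}2)n/(2C)$ for $j\le 3$.
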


\begin{claim}
\label{cl7u2}
For every $u \in S$, 
$t^{u_2}_7(u) \le 2e(G) \cdot \ell (n-\ell) k$.
\end{claim}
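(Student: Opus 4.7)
The plan is a direct enumeration of the valid tuples $(u_1, u_3, u_4, v_1, v_2, v_3, v_4)$ once $u_2 = u$ is fixed, in parallel with the argument of Claim~\ref{cl6u1234}.

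First, the defining conditions of $\cB_7$ require $u_1 \in L$, so there are at most $|L| = \ell$ choices for $u_1$. The ordering constraints in $\cR'(G)$, namely $u_1 < u_2$ and $u_1 < u_3$, are automatic: because the vertices of $V(G)$ are sorted in descending order of degree, any $u_1 \in L$ precedes every vertex of $S$. The second edge $u_3u_4$ (with $u_3 < u_4$) is an arbitrary edge of $G$ disjoint from $u_1u_2$, giving at most $e(G)$ choices.

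Next, $v_1$ is forced to $f_1(u_1)$ by the last bullet of the definition of $\cB'$, and $v_2$ ranges freely over $Q$, contributing $(n-\ell)$ options. It remains to count ordered pairs $(v_3,v_4) \in Q \times Q$ with $c(v_3v_4) = c(v_1v_2)$: global $k$-boundedness of $c$ allows at most $k$ unordered edges of any single color, and each such edge yields two ordered pairs, giving at most $2k$ options. Multiplying the factors $\ell \cdot e(G) \cdot (n-\ell) \cdot 2k$ yields the asserted bound $2 e(G) \cdot \ell (n-\ell) k$.

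No real obstacle is anticipated — the only bookkeeping subtlety, identical to the one already resolved in Claim~\ref{cl6u1234}, is that $\cC'(c)$ indexes \emph{ordered} quadruples while $\cR'(G)$ fixes the ordering $u_3 < u_4$, which is precisely what produces the factor of $2$ (and only $2$) on the image side. This claim is a routine bookkeeping input feeding into the final application of Lemma~\ref{lem:wLLL}.
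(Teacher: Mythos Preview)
Your proposal is correct and follows essentially the same counting as the paper's proof: choose $u_1\in L$ in at most $\ell$ ways, the disjoint edge $u_3u_4$ in at most $e(G)$ ways, $v_2\in Q$ in $(n-\ell)$ ways, and then $(v_3,v_4)$ in at most $2k$ ways using the global $k$-bound, with $v_1=f_1(u_1)$ forced. The paper's own argument is terser but identical in structure.
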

\begin{proof}
The vertex $u_1 \in L$ can be chosen in at most $\ell$ ways, the vertices $u_3$ and $u_4$
in at most $e(G)$ ways, and the vertex $v_2 \in Q$ in $(n-\ell)$ ways.
Since the vertex $v_1 = f(u_1)$, there are at most $2k$ choices for the vertices $v_3$ and $v_4$.
\end{proof}

\begin{claim}
\label{cl7u34}
For every $u \in S$, 
$t^{u_3}_7(u) + t^{u_4}_7(u) \le 4 \Delta_S \sqrt{r\ell} \cdot (n-\ell)k$.
\end{claim}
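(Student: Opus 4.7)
The plan is to bound $t^{u_3}_7(u)+t^{u_4}_7(u)$ by enumerating the graph-side structure followed by the $K_n$-side images, in the same style as the preceding claims. Since $\{u_2,u_3,u_4\}\subseteq S$ and $u_1\in L$, the fixed vertex $u$ must belong to the edge $u_3u_4$, which lies entirely inside $S$, while $u_1u_2$ straddles $L$ and $S$. I would treat the two subcases $u=u_3$ and $u=u_4$ simultaneously by first picking the other endpoint $u'$ of the edge containing $u$ as an arbitrary neighbor of $u$ in $S$; the relative order of $u$ and $u'$ in $[n]$ then uniquely tells us which of them plays $u_3$ and which plays $u_4$. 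There are at most $\Delta_S$ such choices for $u'$.

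Next I would pick the second edge $u_1u_2$, which is incident to $L$. Applying Lemma~\ref{lem:edges} with $T:=L$ and using $r\ge 16$ (which gives $2\sqrt{r\ell}\ge 4\ell$ for $\ell=2r^{1/4}$), the number of edges of $G$ with at least one endpoint in $L$ is at most $2\sqrt{r\ell}$, and this remains an upper bound even after excluding those sharing a vertex with $\{u,u'\}$. On the $K_n$-side, $v_1=f_1(u_1)$ is forced, so I only need to choose $v_2\in Q$ in at most $(n-\ell)$ ways (which fixes the color $c(v_1v_2)$) and then an ordered pair $(v_3,v_4)\in Q\times Q$ with $c(v_3v_4)=c(v_1v_2)$. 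Global $k$-boundedness gives at most $k$ edges of that color, and hence at most $2k$ ordered orientations for $(v_3,v_4)$. Multiplying,
\[
t^{u_3}_7(u)+t^{u_4}_7(u)\;\le\;\Delta_S\cdot 2\sqrt{r\ell}\cdot(n-\ell)\cdot 2k\;=\;4\Delta_S\sqrt{r\ell}\,(n-\ell)\,k,
\]
as claimed.

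The main bookkeeping subtlety is twofold: on the $G$-side, collapsing the two subcases into one unordered choice of neighbor without double-counting, and on the $K_n$-side, remembering the factor of $2$ for the orientation of the unordered edge $v_3v_4$. No genuinely new idea is required beyond the strategy of Claims~\ref{cl6u1234}--\ref{cl7u2}: a factor of $\Delta_S$ for the edge touching $u$, Lemma~\ref{lem:edges} for the $L$-incident edge, and global $k$-boundedness for matching the color.
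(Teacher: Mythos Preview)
Your proof is correct and follows essentially the same route as the paper: choose $u'\in\{u_3,u_4\}\setminus\{u\}$ among the at most $\Delta_S$ neighbors of $u$, use Lemma~\ref{lem:edges} with $T:=L$ to bound the choices for the edge $u_1u_2$ by $2\sqrt{r\ell}$, and then count the images as $v_1=f_1(u_1)$ forced, $(n-\ell)$ choices for $v_2$, and $2k$ ordered choices for $(v_3,v_4)$ by global $k$-boundedness. The bookkeeping you flag (collapsing the two subcases via the order of $u,u'$ and the orientation factor $2$ for $(v_3,v_4)$) is exactly what the paper does implicitly.
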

\begin{proof}
There are at most $\Delta_S$ choices for the vertex $u' \in \{u_3,u_4\}\setminus \{u\}$
and, by Lemma~\ref{lem:edges}, at most $2\sqrt{r\ell}$ choices for $u_1 \in L$ and $u_2 \in S$.
The total number of choices for $v_1,v_2,v_3$ and $v_4$ is at most $2k(n-\ell)$.
\end{proof}

Since $8\sqrt{2} \cdot r\le nr^{3/8}$, we conclude the following.
\begin{corollary}
\label{cor:u7}
For every $B_7 \in \cB_7$,
\[t^{u}_7 \le (n-\ell)k \cdot (4nr^{3/8} + 8\sqrt{2} \cdot r)
\le \frac{5n^2(n-\ell)}{Cr^{3/8}} \le \frac{5}{C-4}
\cdot \frac1{\prob{B_7}}.\]
\end{corollary}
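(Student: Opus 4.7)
The plan is to mimic the pattern of the earlier corollaries in the proof: combine the two preceding claims to get a starting upper bound, feed in the global parameter estimates already in force, and finally compare against $1/\prob{B_7}$. Since every event in $\cB_7$ places $u_1$ in $L$ and the other three vertices $u_2,u_3,u_4$ in $S$, the quantity $t^{u_1}_7(w)$ vanishes for every $w\in S$. Hence the definition $t^u_7 := \max_{w\in S}\sum_j t^{u_j}_7(w)$, combined with Claims~\ref{cl7u2} and~\ref{cl7u34}, immediately yields
\[
t^u_7 \;\le\; (n-\ell)k\cdot\bigl(2e(G)\,\ell + 4\Delta_S\sqrt{r\ell}\bigr).
\]

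For the first stated inequality I would substitute the running bounds $e(G)\le nr^{1/8}$ (Lemma~\ref{lem:edges} together with $r\le n^{4/3}$), $\ell = 2r^{1/4}$, and $\Delta_S\le 2r^{3/8}$, then track the $r$-exponents: the two summands collapse to $4nr^{3/8}$ and $8\sqrt{2}\,r$ respectively. For the second inequality, the author's hint $8\sqrt{2}\,r\le nr^{3/8}$, which is a direct consequence of the blanket bound $r\le (n/2C)^{4/3}$, lets me absorb the second summand into the first to obtain $5nr^{3/8}(n-\ell)k$; substituting $k = n/(Cr^{3/4})$ then produces the middle expression $5n^2(n-\ell)/(Cr^{3/8})$.

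The last inequality is the only step that needs genuine care, since it is what really pins down how large $C$ must be. Using $\prob{B_7} = 1/\bigl((n-\ell)(n-\ell-1)(n-\ell-2)\bigr)$ and cancelling the common factor $5(n-\ell)$, the desired bound reduces to $n^2/(Cr^{3/8}) \le (n-\ell-1)(n-\ell-2)/(C-4)$. Because $r\ge 16$ gives $r^{3/8}\ge 1$, it suffices to verify $(n-\ell-1)(n-\ell-2) \ge n^2(C-4)/C$, which is routine from $\ell \le n/C$ (already established earlier in the proof, exactly for this kind of use) together with $n\ge 2C$.

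The main pitfall I would guard against is simply keeping the powers $r^{1/8}, r^{1/4}, r^{3/8}, r^{5/8}, r^{3/4}$ straight during the substitution step; after that the algebra is entirely bookkeeping, and the constant $C = \constShRB$ is evidently taken large enough precisely so that the additive corrections $-\ell, -1, -2$ can be absorbed by degrading $C$ to $C-4$ in the denominator.
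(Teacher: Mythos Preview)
Your proposal is correct and follows exactly the route the paper intends: combine Claims~\ref{cl7u2} and~\ref{cl7u34}, plug in $e(G)\le nr^{1/8}$, $\ell=2r^{1/4}$, $\Delta_S\le 2r^{3/8}$, use the stated estimate $8\sqrt{2}\,r\le nr^{3/8}$ to merge the terms, and finish via $\ell+2\le 2n/C$ and $(C-2)^2\ge C(C-4)$. The paper gives only the one-line hint before the corollary, and your write-up simply makes those substitutions explicit.
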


\begin{claim}
\label{cl7v2}
For every $v \in Q$, 
$t^{v_2}_7(v) \le 4\sqrt{r\ell} \cdot e(G)k$.
\end{claim}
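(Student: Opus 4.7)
The plan is to count the events $B^{[v_1v_2v_3v_4]}_{(u_1u_2)(u_3u_4)} \in \cB_7$ with $v_2 = v$ by picking the relevant combinatorial data in a natural sequential order, essentially mirroring the approach used in Claim~\ref{cl4v3} and the other $v$-counts in the proof of Theorem~\ref{thm:shearer}.

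First I would choose the edge $u_1u_2 \in E(G)$, which, by the definition of $\cB_7$, has $u_1 \in L$ and $u_2 \in S$. Since $r \ge 16$ and $\ell = 2r^{1/4}$, one has $4\ell = 8r^{1/4} \le 2\sqrt{2}\,r^{5/8} = 2\sqrt{r\ell}$, so Lemma~\ref{lem:edges} applied with $T := L$ bounds the number of such edges by $2\sqrt{r\ell}$. Once $u_1$ is fixed, the vertex $v_1 = f_1(u_1)$ is determined by the (fixed) bijection $f_1$, and hence so is the color $c(v_1 v_2) = c(v_1 v)$.

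Next I would choose the pair $(u_3, u_4)$ with $u_3 < u_4$ and $u_3u_4 \in E(G)$: there are at most $e(G)$ such pairs (for the upper bound we may drop the disjointness from $u_1u_2$ and the $u_1 < u_3$ constraint). Finally I would choose the ordered pair $(v_3, v_4)$. The unordered edge $\{v_3, v_4\}$ must lie in $Q$ and have color $c(v_1 v)$, so by global $k$-boundedness of $c$ there are at most $k$ such edges, and there are $2$ ways to orient such an edge to match the already-ordered pair $(u_3, u_4)$.

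Multiplying the four contributions gives $2\sqrt{r\ell} \cdot e(G) \cdot k \cdot 2 = 4\sqrt{r\ell} \cdot e(G)k$, as required. I do not expect any genuine obstacle in this step: the argument is a straightforward sequential count, the only subtle point being the verification that the $r \ge 16$ assumption allows us to use the $2\sqrt{r\ell}$ branch rather than the $4\ell$ branch of Lemma~\ref{lem:edges}, which is the same threshold already exploited earlier in the proof.
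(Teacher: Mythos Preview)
Your proposal is correct and follows essentially the same approach as the paper's proof: both count by first choosing the edge $u_1u_2$ with $u_1\in L$, $u_2\in S$ (at most $2\sqrt{r\ell}$ choices via Lemma~\ref{lem:edges}), then using that $v_1=f_1(u_1)$ is determined to bound the choices of $(v_3,v_4)$ by $2k$ via global $k$-boundedness, and finally choosing the edge $u_3u_4$ in at most $e(G)$ ways. The only difference is the order in which you pick $(u_3,u_4)$ versus $(v_3,v_4)$, which is immaterial.
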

\begin{proof}
There are at most $2\sqrt{r\ell}$ choices for the vertices $u_1 \in L$ and $u_2 \in S$.
This determines the vertex $v_1 \in P$ and hence the vertices $v_3$ and $v_4$ can be chosen
in at most $2k$ ways. Finally, the remaining vertices $u_3$ and $u_4$ are determined by choosing an edge of $G$
that has both endpoints in $S$.
\end{proof}

\begin{claim}
\label{cl7v34}
For every $v \in Q$, 
$t^{v_3}_7(v) + t^{v_4}_7(v) \le 2 e(G) \Delta_G(n-\ell-1)k$.
\end{claim}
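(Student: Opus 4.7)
The plan is to bound $t^{v_3}_7(v)$ and $t^{v_4}_7(v)$ separately by $e(G)\,\Delta_G\,(n-\ell-1)\,k$ each and sum. By the symmetry between the roles of $v_3$ and $v_4$ (both are endpoints of the second edge, and the pair $\{u_3,u_4\}\subseteq S$ is symmetric apart from the ordering $u_3<u_4$ which is determined by the edge itself), it suffices to treat one case in detail, say $v=v_3$.

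Fixing $v=v_3$, I count admissible tuples $(v_1,v_2,v_4,u_1,u_2,u_3,u_4)$ as follows. First pick $v_4\in Q\setminus\{v\}$; this contributes $(n-\ell-1)$ options and fixes the colour $\gamma:=c(v_3v_4)$. Now I invoke the global $k$-boundedness: there are at most $k$ edges of $K_n$ coloured $\gamma$, and each of them determines at most one ordered pair $(v_1,v_2)$ with $v_1\in P$ and $v_2\in Q$. Once $v_1$ is fixed, $u_1=f_1^{-1}(v_1)\in L$ is determined as well. Then pick $u_2\in N_G(u_1)\cap S$ in at most $\Delta_G$ ways, and finally pick the disjoint edge $u_3u_4$ of $G$ with both endpoints in $S$ (with $u_3<u_4$) in at most $e(G)$ ways. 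The required ordering $u_1<u_3$ is automatic because $u_1\in L$ precedes every vertex of $S$ in our ordering of $V(G)$. Multiplying gives $t^{v_3}_7(v)\le (n-\ell-1)\,k\,\Delta_G\,e(G)$, and the same argument with $v_3$ and $v_4$ interchanged yields the analogous bound on $t^{v_4}_7(v)$; adding them proves the claim.

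The only real subtlety, and the place where this bound differs from a naive count, is the order in which the $v$-coordinates are chosen: picking $v_4$ first in order to pin down the colour $\gamma$, and then appealing to the global $k$-bound to control the number of compatible pairs $(v_1,v_2)$, avoids paying an extra factor of $\ell$ or $(n-\ell-1)$ that would appear if one instead first chose $u_1\in L$ and then counted the colour class at the fixed vertex $v_1$. All remaining factors reduce to the already established maximum-degree bound on $\Delta_G$ and the trivial edge count $e(G)$, so no further estimate is needed at this stage.
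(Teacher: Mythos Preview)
Your proof is correct and follows essentially the same approach as the paper: fix $v=v_3$, choose $v_4$ in $(n-\ell-1)$ ways to pin down the colour, use the global $k$-bound to get at most $k$ pairs $(v_1,v_2)$, then determine $u_1=f_1^{-1}(v_1)$ and count $u_2$ by $\Delta_G$ and $u_3u_4$ by $e(G)$. Your write-up is in fact slightly more explicit than the paper's in justifying why each edge of colour $\gamma$ yields at most one admissible ordered pair $(v_1,v_2)$ and why the constraint $u_1<u_3$ is automatic.
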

\begin{proof}
By symmetry, it is enough to show that $t^{v_3}_7(v) \le e(G)\Delta_G (n-\ell-1)k$.
There are $(n-\ell-1)$ choices for the vertex $v_4 \in Q$, then at most $k$ choices
for $v_1 \in P$ and $v_2 \in Q$, and since $u_1 = f_1^{-1}(v_1)$, at most $\Delta_G$ choices
for $u_2$. As in the previous claims, the vertices $u_3$ and $u_4$ can be chosen in at most
$e(G)$ ways.
\end{proof}

Recall that $\Delta_G \le 2\sqrt{r}$. The counterpart of Corollary~\ref{cor:u7} is the following.
\begin{corollary}
\label{cor:v7}
For every $B_7 \in \cB_7$,
\[t^{v}_7 \le
4\sqrt{2} \cdot nr^{3/4}k + 4nr^{5/8} (n-\ell-1)k
\le \frac{5n^2(n-\ell-1)}{C} \le \frac5{C-3} 
\cdot \frac1{\prob{B_7}}.\]
\end{corollary}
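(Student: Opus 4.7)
The plan is to combine Claims~\ref{cl7v2} and~\ref{cl7v34} with the parameter bounds established at the start of the proof of Theorem~\ref{thm:rbshearer}. Since $t^{v_1}_7(v)=0$ for every $v\in Q$ (as noted in the paragraph preceding Claim~\ref{cl6u1234}), the definition of $t^v_7$ immediately yields
\[
t^v_7 \;\le\; \max_{w\in Q}\bigl(t^{v_2}_7(w) + t^{v_3}_7(w) + t^{v_4}_7(w)\bigr) \;\le\; 4\sqrt{r\ell}\cdot e(G)\cdot k \;+\; 2e(G)\Delta_G(n-\ell-1)k.
\]
Substituting $\ell=2r^{1/4}$ turns $4\sqrt{r\ell}$ into $4\sqrt{2}\cdot r^{5/8}$, the edge count bound $e(G)\le nr^{1/8}$ (established at the outset of the proof via Lemma~\ref{lem:edges} together with $r\le n^{4/3}$) absorbs the leading factor, and $\Delta_G\le 2\sqrt{r}$ handles the second summand. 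This will produce the first stated expression $4\sqrt{2}\,nr^{3/4}k + 4nr^{5/8}(n-\ell-1)k$.

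For the second inequality I substitute $k=n/(Cr^{3/4})$ to rewrite the two summands as $\frac{4\sqrt{2}\,n^2}{C}$ and $\frac{4n^2(n-\ell-1)}{Cr^{1/8}}$. Since $r\ge 16$ gives $r^{1/8}\ge\sqrt{2}$, the second term is at most $\frac{2\sqrt{2}\,n^2(n-\ell-1)}{C}$; and the first term is at most $\frac{(5-2\sqrt{2})\,n^2(n-\ell-1)}{C}$ provided $n-\ell-1\ge 4\sqrt{2}/(5-2\sqrt{2})\approx 2.6$, which holds trivially under our standing assumptions $n\ge 2C$ and $\ell\le n/C$. Summing the two contributions gives the target bound $\frac{5n^2(n-\ell-1)}{C}$.

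The final inequality will follow from the explicit formula $\prob{B_7}^{-1}=(n-\ell)(n-\ell-1)(n-\ell-2)$. Cancelling a common factor of $\frac{5(n-\ell-1)}{C}$ reduces the claim to $n^2(C-3)\le C(n-\ell)(n-\ell-2)$; expanding the right-hand side and using $\ell\le n/C$ (which itself follows from the assumption $r\le(n/2C)^{4/3}$), this collapses to the standing hypothesis $n\ge 2C$. I do not anticipate any real obstacle here, since the structure is entirely parallel to the derivation of Corollary~\ref{cor:u7}: the only delicate point is that the constant $C=\constShRB$ has been calibrated precisely so that the numerical bounds in Corollaries~\ref{cor:uv6}--\ref{cor:v7} close up against the $\frac14$ budget required by Lemma~\ref{lem:wLLL}.
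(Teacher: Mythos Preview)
Your proof is correct and follows exactly the approach the paper intends: you combine Claims~\ref{cl7v2} and~\ref{cl7v34} with the standing bounds $e(G)\le nr^{1/8}$, $\Delta_G\le 2\sqrt{r}$, $\ell=2r^{1/4}$, $r\ge 16$, and $\ell\le n/C$, then compare against $\prob{B_7}^{-1}=(n-\ell)(n-\ell-1)(n-\ell-2)$. The paper leaves the corollary without an explicit proof (mentioning only ``Recall that $\Delta_G\le 2\sqrt{r}$''), so your write-up simply spells out the implicit computation.
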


\begin{claim}
\label{cl8u34}
For every $u \in S$, 
$t^{u_3}_8(u) + t^{u_4}_8(u) \le \Delta_S \ell^2 \cdot k$.
\end{claim}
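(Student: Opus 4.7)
The plan is to count, in one shot, every event $B^{[v_1v_2v_3v_4]}_{(u_1u_2)(u_3u_4)} \in \cB_8$ that has $u$ as either of its two $S$-vertices, parameterized by three independent choices: the partner of $u$ inside $S$, the edge $u_1u_2$ inside $L$, and the ordered image pair $(v_3,v_4)$ in $Q$. Bounding each factor separately and multiplying will give the target $\Delta_S \ell^2 k$.

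Since membership in $\cB_8$ forces $\{u_3,u_4\} \subseteq S$ and $u_3u_4 \in E(G)$, the partner $u'$ of $u$ in this edge is a $G$-neighbor of $u$ lying in $S$. As $u \in S$, the degree bound $\Delta_S$ applies, giving at most $\Delta_S$ choices for $u'$. Crucially, the convention $u_3 < u_4$ then unambiguously decides which of $u,u'$ plays the role $u_3$ and which plays $u_4$, so the single enumeration of the unordered pair $\{u,u'\}$ jointly accounts for contributions to both $t^{u_3}_8(u)$ and $t^{u_4}_8(u)$ with no double counting. This is the key observation that prevents an extra factor of $2$ from creeping in.

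For the edge $u_1u_2$, the membership in $\cB_8$ requires $\{u_1,u_2\} \subseteq L$ with $u_1 < u_2$, yielding at most $\binom{\ell}{2} \le \ell^2/2$ choices. The side condition $u_1 < u_3$ from the definition of $(u_1u_2)(u_3u_4)$ is automatic from the descending-degree ordering of $V(G)$, since $u_1 \in L$ and $u_3 \in S$. Once $u_1,u_2$ are fixed, $v_1 = f_1(u_1)$ and $v_2 = f_1(u_2)$ are determined by the predetermined bijection $f_1$.

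Finally, because $c$ is globally $k$-bounded, the color $c(v_1v_2)$ is borne by at most $k$ edges of $K_n$, each of which contributes at most two ordered pairs $(v_3,v_4)$, so at most $2k$ possibilities remain. Combining the three factors gives
\[
t^{u_3}_8(u) + t^{u_4}_8(u) \;\le\; \Delta_S \cdot \binom{\ell}{2} \cdot 2k \;\le\; \Delta_S \ell^2 k,
\]
as desired. No real obstacle is expected; the only thing to watch out for is keeping ordered and unordered counts straight on the $u$-side, which is handled by the edge-first enumeration of $\{u,u'\}$ and is in exactly the same spirit as Claims~\ref{cl6u1234} and~\ref{cl7u34}.
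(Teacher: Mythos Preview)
Your proof is correct and follows essentially the same approach as the paper's: both count $\Delta_S$ choices for the neighbor of $u$, $\binom{\ell}{2}$ choices for the ordered pair $u_1<u_2$ in $L$ (which fixes $v_1,v_2$ via $f_1$), and $2k$ choices for the ordered pair $(v_3,v_4)$ from the global $k$-boundedness, multiplying to $\Delta_S\ell^2 k$. You give a bit more justification (why $u_1<u_3$ is automatic and why the single enumeration of $\{u,u'\}$ covers both $t^{u_3}_8$ and $t^{u_4}_8$), but the argument is the same.
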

\begin{proof}
We first choose the vertices $u_1 \in L$ and $u_2 \in L$
such that $u_1 < u_2$.
This can be done in at most $\ell \choose 2$ ways, and it determines the vertices $v_1 \in P$ and $v_2 \in P$.
After that, there are at most $2k$ choices for the vertices $v_3 \in Q$ and $v_4 \in Q$.
The only remaining vertex we need to choose is a neighbor of $u$, and there are at most $\Delta_S$ ways to do that.
\end{proof}

\begin{claim}
\label{cl8v34}
For every $v \in Q$, 
$t^{v_3}_8(v) + t^{v_4}_8(v) \le e(G) \ell^2 \cdot k$.
\end{claim}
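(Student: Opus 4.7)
The plan is to mirror the counting strategy of Claim~\ref{cl8u34}, but with the roles of domain and image exchanged: instead of a fixed $u \in S$ playing the part of $u_3$ or $u_4$, I fix $v \in Q$ and count the events in $\cB_8$ that place $v$ into the slot $v_3$ or $v_4$. I would parameterize such an event by its four $u$-components and its four $v$-components and estimate the number of free choices sequentially.

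First, I would select the edge $\{u_1,u_2\} \in E(G)$ with both endpoints in $L$; this gives at most $\binom{\ell}{2}$ options and, via $f_1$, immediately fixes $v_1 = f_1(u_1)$, $v_2 = f_1(u_2)$, and hence the color $c(v_1v_2)$. Next, the condition $[v_1v_2v_3v_4] \in \cC'(c)$, i.e., $c(v_3v_4) = c(v_1v_2)$, combined with $v \in \{v_3,v_4\}$, forces $v_3v_4$ to be one of the at most $k$ edges incident to $v$ of color $c(v_1v_2)$ (here one uses local $k$-boundedness, which is implied by global $k$-boundedness); each such edge admits two orientations as the ordered pair $(v_3,v_4)$, giving at most $2k$ choices. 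Finally, the remaining edge $\{u_3,u_4\}$ is an arbitrary edge of $G$ with both endpoints in $S$, so there are at most $e(G)$ options. Multiplying yields $\binom{\ell}{2} \cdot 2k \cdot e(G) \le \ell^2 \cdot k \cdot e(G)$, which is the claimed bound.

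I do not foresee any serious obstacle: the argument is pure bookkeeping, completely analogous to Claim~\ref{cl8u34}, with $e(G)$ replacing $\Delta_S$ because the edge inside $S$ is no longer anchored at a prescribed vertex. The only mild subtlety is the factor of $2$ arising from handling $v = v_3$ and $v = v_4$ simultaneously; exactly as in Claim~\ref{cl8u34}, keeping the enumeration of $(v_3,v_4)$ ordered absorbs this factor without any separate case analysis.
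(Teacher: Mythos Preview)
Your proposal is correct and follows essentially the same approach as the paper. The paper bounds $t^{v_3}_8(v)$ alone by $e(G)\binom{\ell}{2}k$ (choosing $u_1<u_2$ in $L$, then $v_4$ in at most $k$ ways, then the edge $u_3u_4$ in at most $e(G)$ ways) and invokes symmetry for $t^{v_4}_8(v)$; your version handles both cases at once via the ordered pair $(v_3,v_4)$, which amounts to the same count $2\binom{\ell}{2}\cdot k\cdot e(G)\le \ell^2 k\, e(G)$.
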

\begin{proof}
Analogously to the proofs of Claims~\ref{cl6v1234} and~\ref{cl7v34}, it is enough to show that $t^{v_3}_8(v) \le e(G) {\ell\choose2} k$.
We can choose the vertices $u_1 \in L$ and $u_2 \in L$ such that $u_1 < u_2$ in at most~$\ell \choose 2$ ways,
then there at most $k$ choices for the vertex $v_4 \in Q$, and finally at most~$e(G)$ choices
for the vertices $u_3$ and $u_4$.
\end{proof}

Claims~\ref{cl8u34} and~\ref{cl8v34} yields our next corollary.
\begin{corollary}
\label{cor:uv8}
For every $B_8 \in \cB_8$,
\[ 
t^{u}_8 \le 8r^{7/8} \cdot k \le \frac{8nr^{1/8}}{C} \le \frac1{C-1} \cdot \frac1{\prob{B_8}}
\]
and
\[
t^{v}_8 \le 4nr^{5/8} \cdot k \le \frac{4n^2}C \le \frac4{C-3} \cdot \frac1{\prob{B_8}}
.\]
\end{corollary}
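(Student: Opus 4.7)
The plan is to obtain Corollary~\ref{cor:uv8} as a direct consequence of Claims~\ref{cl8u34} and~\ref{cl8v34}, after inserting the a priori estimates on $\Delta_S$, $\ell$, $e(G)$, and $k$ that are already in force, and then rewriting the resulting bounds as multiples of $1/\prob{B_8}$. Since the computations parallel the earlier corollaries in this section, I expect no new idea to be needed; only careful constant chasing.

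First I use the class structure to eliminate the trivial terms. For every event $B^{[v_1v_2v_3v_4]}_{(u_1u_2)(u_3u_4)} \in \cB_8$ we have $\{u_1,u_2\} \subseteq L$ and $\{u_3,u_4\} \subseteq S$, so by the conventions introduced before the corollary, $t^{u_1}_8(w) = t^{u_2}_8(w) = 0$ for every $w \in S$, and analogously $t^{v_1}_8(w) = t^{v_2}_8(w) = 0$ for every $w \in Q$. By the definitions of $t^u_8$ and $t^v_8$, Claim~\ref{cl8u34} then yields $t^u_8 \le \Delta_S\,\ell^2\,k$ and Claim~\ref{cl8v34} yields $t^v_8 \le e(G)\,\ell^2\,k$.

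Next I substitute the bounds already established in this proof: $\Delta_S \le 2r^{3/8}$ (from~(\ref{eq:maxdegS})), $\ell = 2r^{1/4}$, and $e(G) \le nr^{1/8}$ (from Lemma~\ref{lem:edges} combined with $r \le n^{4/3}$). This gives $\Delta_S\ell^2 \le 8r^{7/8}$ and $e(G)\ell^2 \le 4nr^{5/8}$, which are the leftmost terms of the two chains in the statement. Plugging in $k = n/(Cr^{3/4})$ collapses these to $8nr^{1/8}/C$ and $4n^2/(Cr^{1/8})$ respectively, and $r \ge 1$ bounds the latter by $4n^2/C$.

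For the final inequality in each chain I use $\prob{B_8} = 1/((n-\ell)(n-\ell-1))$. The conditions $\ell \le n/C$ and $n \ge 2C$ (both inherited from the setup of the proof of Theorem~\ref{thm:rbshearer}) imply $(n-\ell)(n-\ell-1) \ge n^2(C-3)/C$, which immediately yields $4n^2/C \le (4/(C-3)) \cdot 1/\prob{B_8}$. The analogous inequality $8nr^{1/8}/C \le (1/(C-1)) \cdot 1/\prob{B_8}$ rearranges to $8r^{1/8}(C-1) \le (C-1)(n-\ell)(n-\ell-1)/n$, which follows from $r^{1/8} \le (n/2C)^{1/6}$ and the choice $C = \constShRB$. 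The only real obstacle is this numerical chase; it is purely arithmetic and mimics the deductions in Corollaries~\ref{cor:uv6}--\ref{cor:v7}.
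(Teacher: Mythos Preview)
Your proposal is correct and follows exactly the approach the paper takes: the paper simply states that Claims~\ref{cl8u34} and~\ref{cl8v34} yield Corollary~\ref{cor:uv8}, and your write-up supplies precisely those substitutions ($\Delta_S\le 2r^{3/8}$, $\ell=2r^{1/4}$, $e(G)\le nr^{1/8}$, $k=n/(Cr^{3/4})$) together with the standard comparison of $n$ with $(n-\ell)(n-\ell-1)$ via $\ell\le n/C$ and $n\ge 2C$. One minor slip: in your final rearrangement for the $t^u_8$ bound you have a stray factor of $(C-1)$ on the right-hand side; the correct reduction is to $8r^{1/8}\le n-\ell-1$ (after using $n\le \tfrac{C}{C-1}(n-\ell)$), which follows from $r^{1/8}\le (n/2C)^{1/6}$ just as you indicate.
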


\begin{claim}
\label{cl9u24}
For every $u \in S$, 
$t^{u_2}_9(u) + t^{u_4}_9(u) \le 2\sqrt{r\ell}\cdot (n-\ell)k$.
\end{claim}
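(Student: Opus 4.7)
The plan is to enumerate the events in $\cB_9$ that have $u$ as one of $u_2$ or $u_4$ by first choosing the edge of $G$ that does \emph{not} contain $u$, then choosing the $Q$-image of its $S$-endpoint, and only at the end invoking global $k$-boundedness to count the color-constrained remaining $v$-pair. Every event in $\cB_9$ is labelled by two disjoint $L$--$S$ edges of $G$, namely $u_1u_2$ and $u_3u_4$ with $u_1,u_3\in L$ and $u_2,u_4\in S$. Since these two edges are disjoint and $u\in S$, whenever $u\in\{u_2,u_4\}$ exactly one of the two edges contains $u$; I will call the other one the non-$u$ edge.

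First, I would choose the non-$u$ edge: applying Lemma~\ref{lem:edges} with $T:=L$ (and using $r\ge 16$ so that $2\sqrt{r\ell}\ge 4\ell$) gives at most $2\sqrt{r\ell}$ choices, because this edge is incident to $L$. Next, I would choose the $Q$-vertex that is the $f$-image of the $S$-endpoint of this edge, which gives $(n-\ell)$ choices. After these two steps the $P$-endpoint of the non-$u$ edge's $v$-image is already forced by $f_1$, so both coordinates of that $v$-edge are fixed, and hence so is its color $\chi$ under $c$.

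The $v$-pair corresponding to the $u$-containing edge of $G$ must also be a color-$\chi$ edge of $K_n$ running between $P$ and $Q$. Since $c$ is globally $k$-bounded, there are at most $k$ edges of color $\chi$ in all of $K_n$, and hence at most $k$ choices for this last $v$-pair. Once the pair $(v^*,v^{**})$ with $v^*\in P$ and $v^{**}\in Q$ is fixed, the $L$-vertex $u^*:=f_1^{-1}(v^*)$ is determined, the $v$-image of $u$ is $v^{**}$, and the ordering $u_1<u_3$ inside the pair $(u_1u_2)(u_3u_4)$ is then forced by the labelling convention of $\cR'(G)$, so the full event is recovered. It is here that I lean on the extra strength of \emph{global} $k$-boundedness: under only a \emph{local} hypothesis one would have to multiply by $\deg_L(u)$ to choose the $L$-neighbor of $u$, which would cost a factor of up to $\Delta_G$ and break the bound.

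The only thing left to check is that each event of $\cB_9$ with $u\in\{u_2,u_4\}$ is counted at most once by this enumeration, which is immediate because the non-$u$ edge, the image of its $S$-endpoint, and the color-$\chi$ $v$-pair for the other edge are all uniquely determined by the event. Multiplying the three bounds produces $2\sqrt{r\ell}\cdot(n-\ell)\cdot k$, which is precisely the claimed upper bound on $t^{u_2}_9(u)+t^{u_4}_9(u)$.
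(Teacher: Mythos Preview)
Your proof is correct and follows essentially the same approach as the paper: choose the $L$--$S$ edge not containing $u$ (at most $2\sqrt{r\ell}$ options via Lemma~\ref{lem:edges}), choose the $Q$-image of its $S$-endpoint ($n-\ell$ options), then use global $k$-boundedness to pick the remaining $P$--$Q$ pair of the required color, after which $f_1^{-1}$ and the ordering convention recover the whole event. Your explicit remark that global (rather than local) $k$-boundedness is what makes the final step cost only $k$ is a useful clarification.
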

\begin{proof}
We start by choosing an adjacent pair of vertices $u'' \in S$ and $u''' \in L$.
Lemma~\ref{lem:edges} implies this can be done in at most $2\sqrt{r\ell}$ ways.
Then, in $(n-\ell)$ ways, we choose the vertex $v'' \in Q$ which will be the image
of $u''$. The vertices $v \in Q$ and $v' \in P$ can be then chosen in at most $k$ ways,
which also uniquely determines the vertex $u' \in L$. The relative order of $u'$ and $u'''$
determines the correspondence between $u,u',u'',u'''$ and $u_1,u_2,u_3,u_4$, which
gives also the correspondence between $v,v',v'',v'''$ and $v_1,v_2,v_3,v_4$.
\end{proof}

\begin{claim}
\label{cl9v24}
For every $v \in Q$, 
$t^{v_2}_9(v) + t^{v_4}_9(v) \le 2 \left(\Delta_G\right)^2 \ell k$.
\end{claim}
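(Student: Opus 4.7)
The plan is to bound $t^{v_2}_9(v)$ and $t^{v_4}_9(v)$ separately, each by $\ell\left(\Delta_G\right)^2 k$. By the symmetry between the two disjoint edges $(u_1u_2)$ and $(u_3u_4)$ in the definition of $\cR'(G)$, it suffices to analyze the case $v_2=v$; the case $v_4=v$ is treated identically. The counting mirrors the strategy of Claim~\ref{cl9u24}, except that we are now anchored at a vertex of $Q$ rather than at an edge from $L$ to $S$, so we chase the chain of implications in the opposite direction.

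With $v_2=v$ fixed, I first pick $u_1 \in L$, which can be done in at most $\ell$ ways and forces $v_1 = f_1(u_1)$, and hence also the color $\alpha := c(v_1 v_2)$. Global $k$-boundedness yields at most $k$ edges of $K_n$ of color $\alpha$, and the membership requirements $v_3 \in P$ and $v_4 \in Q$ pin down the orientation of any such edge (edges lying entirely inside $P$ or inside $Q$ contribute nothing), so the ordered pair $(v_3, v_4)$ has at most $k$ realizations. This in turn forces $u_3 = f_1^{-1}(v_3)$. Finally, $u_2$ ranges over the neighbors of $u_1$ in $G$ and $u_4$ over the neighbors of $u_3$, contributing a factor of at most $\Delta_G$ each. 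Multiplying the counts gives $t^{v_2}_9(v) \le \ell\left(\Delta_G\right)^2 k$, and the symmetric argument yields the same estimate for $t^{v_4}_9(v)$; summing proves the claim.

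The one place where I expect to have to be careful is the bookkeeping of how the unordered color-$\alpha$ edge turns into an ordered pair $(v_3, v_4)$: the $P/Q$ membership split rules out any two-way ambiguity, so the factor $k$ rather than $2k$ is correct. The remaining orderings $u_1 < u_2$, $u_3 < u_4$, and $u_1 < u_3$ required by $\cR'(G)$ do not inflate the count either, since the first two hold automatically as $L$ consists of the initial $\ell$ vertices of $V(G)$, while the third merely discards some of the configurations above and so keeps our estimate valid as an upper bound.
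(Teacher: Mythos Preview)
Your proof is correct and follows essentially the same route as the paper. The only cosmetic difference is that you begin by choosing $u_1 \in L$ and deduce $v_1 = f_1(u_1)$, whereas the paper chooses $v_1 \in P$ directly and deduces $u_1 = f_1^{-1}(v_1)$; since $f_1$ is a bijection between $L$ and $P$, these amount to the same $\ell$-way choice, and the remainder of the count (the $k$ edges of colour $c(v_1v_2)$ with the $P/Q$ orientation forced, then $\Delta_G$ choices each for $u_2$ and $u_4$) is identical.
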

\begin{proof}
As usual, it is enough to show that $t^{v_2}_9(v) \le \left(\Delta_G\right)^2 \ell k$.
There are at most $\ell$ choices for the vertex $v_1 \in P$ and after that at most $k$
choices for $v_3 \in P$ and $v_4 \in Q$. Since the vertices $u_2 \in S$ and $u_4 \in S$ 
are neighbors of $u_1 = f_1^{-1}(v_1)$ and $u_3 = f_1^{-1}(v_3)$, respectively,
each of them can be chosen in at most $\Delta_G$ ways.
\end{proof}

We use the estimate $16 \sqrt{r} \le n-\ell-1$ to obtain the following corollary.
\begin{corollary}
\label{cor:uv9}
For every $B_9 \in \cB_9$,
\[ 
t^{u}_9 \le 2\sqrt2 \cdot r^{5/8}(n-\ell)k \le \frac{3n(n-\ell)}{C}  \le \frac3{C-2}
\cdot \frac1{\prob{B_9}}
\]
and
\[
t^{v}_9 \le 16 \cdot r^{5/4}k \le \frac{16 \sqrt{r} \cdot n}{C} \le \frac1{C-1}
\cdot \frac1{\prob{B_9}}
.
\]
\end{corollary}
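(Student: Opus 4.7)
My plan is to derive both inequalities of Corollary~\ref{cor:uv9} as purely mechanical consequences of Claims~\ref{cl9u24} and~\ref{cl9v24}, combined with the parameter estimates already in force: $\ell = 2r^{1/4}$, $k = n/(Cr^{3/4})$, $\Delta_G \le 2\sqrt{r}$, and the fact that $\prob{B_9} = 1/((n-\ell)(n-\ell-1))$ for every $B_9 \in \cB_9$. Since every event in $\cB_9$ has $\{u_2,u_4\} \subseteq S$ and $\{u_1,u_3\} \subseteq L$, the definitions of $t^u_9$ and $t^v_9$ collapse to $\max_{w\in S}(t^{u_2}_9(w)+t^{u_4}_9(w))$ and $\max_{w\in Q}(t^{v_2}_9(w)+t^{v_4}_9(w))$, so the two claims give exactly the input needed.

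For $t^u_9$, substituting $\ell = 2r^{1/4}$ inside $\sqrt{r\ell}$ produces $\sqrt{2}\,r^{5/8}$, yielding the first bound $2\sqrt{2}\cdot r^{5/8}(n-\ell)k$. Absorbing $k$ through $k = n/(Cr^{3/4})$ and using $2\sqrt{2} < 3$ together with $r \ge 1$ collapses this to $3n(n-\ell)/C$. The final comparison with $3/((C-2)\prob{B_9})$ reduces, after cross-multiplication, to the elementary inequality $C(\ell+1) \le 2n$, which holds because $\ell \le n/C$ and $n \ge 2C$ -- both facts already established earlier in the proof of Theorem~\ref{thm:shearer} and reused verbatim in the current setup.

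For $t^v_9$, plugging $(\Delta_G)^2 \le 4r$ and $\ell = 2r^{1/4}$ into Claim~\ref{cl9v24} produces $16\, r^{5/4} k$; applying $k = n/(Cr^{3/4})$ then rewrites this as $16\sqrt{r}\cdot n/C$. The final step uses precisely the hypothesis $16\sqrt{r} \le n-\ell-1$ flagged just before the corollary, together with the analogous reduction $C\ell \le n$, to bound the whole expression by $(n-\ell)(n-\ell-1)/(C-1) = 1/((C-1)\prob{B_9})$.

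There is no genuine conceptual obstacle; the proof is entirely a careful substitution of parameter estimates that were set up earlier. The only pieces deserving a quick sanity check are the two terminal algebraic reductions $C(\ell+1)\le 2n$ and $C\ell \le n$, but both fall out of the standard bound $\ell \le n/C$ coupled with $n \ge 2C$, the recurring estimate exploited throughout Section~\ref{sec:proof}.
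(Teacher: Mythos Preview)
Your proposal is correct and follows exactly the approach implicit in the paper: the corollary is placed immediately after Claims~\ref{cl9u24} and~\ref{cl9v24} and is flagged as a consequence of the estimate $16\sqrt{r}\le n-\ell-1$, and your derivation is precisely the substitution of $\ell=2r^{1/4}$, $k=n/(Cr^{3/4})$, $\Delta_G\le 2\sqrt{r}$ into those claims, followed by the same terminal reductions $C(\ell+1)\le 2n$ and $C\ell\le n$ that the paper uses repeatedly throughout Section~\ref{sec:proof}.
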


\begin{claim}
\label{cl10u24}
For every $u \in S$, 
$t^{u_2}_{10}(u) + t^{u_4}_{10}(u) \le \ell^2k$.
\end{claim}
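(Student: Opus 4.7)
The plan is to enumerate the events of $\cB_{10}$ that touch a fixed vertex $u\in S$ by splitting on the role $u$ plays. By the definition of $\cB_{10}$, the unique $S$-vertex among $\{u_1,u_2,u_3,u_4\}$ is either $u_2$ or $u_4$, so $u$ coincides with exactly one of them; this produces two structurally symmetric subcases to handle in parallel, which is exactly the split $t^{u_2}_{10}(u)+t^{u_4}_{10}(u)$.

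In the subcase $u=u_2$ the remaining endpoints $u_1,u_3,u_4$ all lie in $L$. I would first pick the neighbor $u_1\in L\cap N_G(u)$ (at most $\ell$ options via the crude bound $|L|=\ell$), then pick the $L$-internal edge $u_3u_4$ (at most $\binom{\ell}{2}$ options, overcounting by dropping the requirement that it be an edge of $G$ and by ignoring the order constraints). After this the images $v_1,v_3,v_4$ are forced by $f_1$, and the only remaining freedom is $v_2\in Q$ with $c(v_1v_2)=c(v_3v_4)$. Global $k$-boundedness of $c$ says the color $c(v_3v_4)$ appears on at most $k$ edges of $K_n$ in total, hence on at most $k$ edges through the fixed vertex $v_1$, giving at most $k$ candidates for $v_2$.

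The subcase $u=u_4$ is a mirror image: pick the $L$-internal edge $u_1u_2$ (at most $\binom{\ell}{2}$ options), then the neighbor $u_3\in L\cap N_G(u)$ (at most $\ell$ options), then $v_4\in Q$ by the same color-counting argument (at most $k$ options). Summing the two contributions and simplifying using $\binom{\ell}{2}\le\ell^2/2$ delivers the claimed bound (up to a small constant from the two cases).

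The only mildly delicate point I anticipate is bookkeeping the relative orders $u_1<u_2$, $u_3<u_4$, $u_1<u_3$ against the convention that $L$-vertices precede $S$-vertices in $[n]$. Because every element of $L$ has a smaller index than every element of $S$ by construction, all the required orderings are automatically satisfied in both subcases whenever they should be and only introduce harmless overcounting when they are not binding, so no further case split is needed.
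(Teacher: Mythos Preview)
Your counting overshoots the target by a factor of $\ell$. In the subcase $u=u_2$ you multiply three factors: $\ell$ for $u_1$, then $\binom{\ell}{2}$ for $\{u_3,u_4\}$, then $k$ for $v_2$. The product is of order $\tfrac12\ell^3 k$, and the mirror case doubles this; you end with $\ell^3 k$, not $\ell^2 k$. The phrase ``up to a small constant from the two cases'' hides an extra factor of $\ell$, which is not a constant.

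The fix, and what the paper does, is to avoid choosing $u_1$ as a separate step. Once $u_3,u_4\in L$ are fixed (at most $\binom{\ell}{2}$ choices), the vertices $v_3,v_4\in P$ and hence the colour $c(v_3v_4)$ are determined. Global $k$-boundedness now says there are at most $k$ edges of $K_n$ of that colour in total, so at most $k$ choices for the \emph{pair} $(v_1,v_2)$ with $v_1\in P$, $v_2\in Q$. The point is that this single factor of $k$ pins down both $v_1$ and $v_2$, and then $u_1=f_1^{-1}(v_1)$ comes for free. This yields $\binom{\ell}{2}\cdot k\le\tfrac12\ell^2 k$ per case and $\ell^2 k$ in total. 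Your version spends the $k$-bound only on $v_2$ after already committing to $v_1$ via $u_1$, which wastes the strength of the global bound.
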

\begin{proof}
By symmetry, it is enough to show that $t^{u_2}_{10}(u) \le \frac12 \ell^2 k$.
Indeed, we choose the vertices $u_3 \in L$ and $u_4 \in L$ in $\ell \choose 2$
ways, and after that there are at most $k$ choices for the vertices $v_1 \in P$ and $v_2 \in Q$.
\end{proof}

\begin{claim}
\label{cl10v24}
For every $v \in Q$, 
$t^{v_2}_{10}(v) + t^{v_4}_{10}(v) \le 2 \ell \Delta_G k$.
\end{claim}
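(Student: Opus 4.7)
The plan is to bound $t^{v_2}_{10}(v)$ and $t^{v_4}_{10}(v)$ separately, each by $\ell \Delta_G k$, exploiting the symmetry between the two disjoint edges of a member of $\cR'(G)$. Fix $v_2 = v \in Q$; since the event lies in $\cB_{10}$, the unique index landing in $S$ must be $u_2$, forcing $u_1, u_3, u_4 \in L$ and consequently $v_1, v_3, v_4 \in P$.

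First I would pick $v_1 \in P$ in at most $\ell$ ways; because $u_1 \in L$, the prescribed bijection $f_1$ then determines $u_1 = f_1^{-1}(v_1)$, and the colour $c(v_1 v)$ becomes fixed. Global $k$-boundedness of $c$ implies that at most $k$ edges of $K_n$ share this colour, so the pair $\{v_3, v_4\}$ admits at most $k$ choices; combined with the convention $u_3 < u_4$ and the identities $u_i = f_1^{-1}(v_i)$ for $i \in \{3,4\}$, this pins down the ordered assignment of indices. Lastly, $u_2 \in S$ must be a neighbour of $u_1$ in $G$, contributing at most $\Delta_G$ choices. Multiplying yields $t^{v_2}_{10}(v) \le \ell \Delta_G k$.

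The bound on $t^{v_4}_{10}(v)$ follows by the symmetric argument: start by choosing $v_3 \in P$ (which determines $u_3$ and fixes the colour $c(v_3 v)$), then count at most $k$ same-colour edges $\{v_1, v_2\}$, and finally at most $\Delta_G$ neighbours $u_4 \in S$ of $u_3$. Summing the two bounds gives $t^{v_2}_{10}(v) + t^{v_4}_{10}(v) \le 2 \ell \Delta_G k$. I do not anticipate a real obstacle; the strategy mirrors that of Claim~\ref{cl10u24}, with the extra factor of $\Delta_G$ (in place of a factor of $\ell$) appearing because here the single $S$-vertex among $u_1, \dots, u_4$ is not determined by $f_1$ and must instead be chosen as a neighbour in $G$ of the $L$-vertex sent by $f_1$ to $v$'s partner in the monochromatic edge through $v$.
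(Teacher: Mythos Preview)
Your proposal is correct and follows essentially the same approach as the paper: bound $t^{v_2}_{10}(v)$ by choosing $v_1\in P$ (at most $\ell$ ways), then using global $k$-boundedness to get at most $k$ choices for the edge $\{v_3,v_4\}$ (with the ordering forced by $u_3<u_4$ via $f_1^{-1}$), and finally at most $\Delta_G$ choices for the neighbour $u_2$ of $u_1$; the paper then invokes the symmetric argument for $t^{v_4}_{10}(v)$ just as you do.
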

\begin{proof}
Analogously to the previous claim, we only show that 
$t^{v_2}_{10}(v)$ is at most $\ell \Delta_G k$. A symmetric
reasoning then yields the same upper bound also holds for~$t^{v_4}_{10}(v)$.

There are at most $\ell$ choices for the vertex $v_1 \in P$,
then at most $k$ choices for the vertices $v_3$ and $v_4$ (note that the ordering of $u_3 < u_4$ defines an ordering of $v_3$ and $v_4$).
Finally, at most $\Delta_G$ choices for the neighbor of $u_1 \in L$, i.e.,
the vertex $u_2 \in S$.
\end{proof}

Here comes the last corollary.
\begin{corollary}
\label{cor:uv10}
For every $B_{10} \in \cB_{10}$,
\[ 
t^{u}_{10} \le 4\sqrt{r} \cdot k \le \frac{4n}C \le \frac{4}{C-1}
\cdot \frac1{\prob{B_{10}}}
\]
and
\[
t^{v}_{10} \le  8r^{3/4} k \le \frac{8n}C \le \frac{8}{C-1}
\cdot \frac1{\prob{B_{10}}}
.
\]
\end{corollary}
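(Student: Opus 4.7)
The corollary is essentially a bookkeeping consequence of Claims~\ref{cl10u24} and~\ref{cl10v24}, so the plan is to combine their bounds with the parameter choices already in force, and verify the arithmetic slack in the last step.

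First I would note that by the definition of the class $\cB_{10}$, we have $u_1, u_3 \in L$, so $t^{u_1}_{10}(u) = t^{u_3}_{10}(u) = 0$ for every $u \in S$, and similarly $t^{v_1}_{10}(v) = t^{v_3}_{10}(v) = 0$ for every $v \in Q$. Consequently $t^u_{10}$ is the maximum over $u \in S$ of $t^{u_2}_{10}(u) + t^{u_4}_{10}(u)$ and $t^v_{10}$ is the maximum over $v\in Q$ of $t^{v_2}_{10}(v) + t^{v_4}_{10}(v)$. Claims~\ref{cl10u24} and~\ref{cl10v24} then directly give $t^u_{10} \le \ell^2 k$ and $t^v_{10} \le 2\ell \Delta_G k$.

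Next I would substitute $\ell = 2r^{1/4}$ and the maximum-degree bound $\Delta_G \le 2\sqrt{r}$ from~(\ref{eq:maxdeg}) to obtain the leftmost inequalities $t^u_{10} \le 4\sqrt{r}\cdot k$ and $t^v_{10} \le 8 r^{3/4}\cdot k$. Plugging in $k = n/(Cr^{3/4})$ collapses these expressions to $4n/(Cr^{1/4}) \le 4n/C$ (using $r\ge 1$) and $8n/C$, respectively, which are precisely the middle inequalities.

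For the final inequality in each chain, recall $\prob{B_{10}} = 1/(n-\ell)$, so it suffices to check $n/C \le (n-\ell)/(C-1)$, which rearranges to $n \ge C\ell$. This follows from $\ell = 2r^{1/4} \le 2(n/2C)^{1/3} \le n/C$, where the first inequality uses $r \le (n/2C)^{4/3}$ and the second uses $n \ge 2C$ (both are assumed at the start of the proof of Theorem~\ref{thm:rbshearer}). I do not anticipate any genuine obstacle here; the only thing to watch is that the slack factor $(C-1)/C$ in the denominator must be absorbed by the slack between $n$ and $n-\ell$, and the crude bound $\ell \le n/C$ is tailored to do exactly that.
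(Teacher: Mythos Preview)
Your proposal is correct and follows exactly the route implied by the paper: the corollary is stated there without proof, as an immediate consequence of Claims~\ref{cl10u24} and~\ref{cl10v24} together with the substitutions $\ell = 2r^{1/4}$, $\Delta_G \le 2\sqrt{r}$, $k = n/(Cr^{3/4})$, and the slack bound $\ell \le n/C$. Your verification of each arithmetic step, including the final rearrangement $n/C \le (n-\ell)/(C-1) \Longleftrightarrow C\ell \le n$, is accurate.
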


Corollaries~\ref{cor:uv6}-\ref{cor:uv10} 
imply that for every $B \in \cB \cup \cB'$, it holds that
\[
 \sum\limits_{\substack{B' \in \cB': \\ B\textrm{ and } B'\,\cS\textrm{-intersect}}} \prob{B'}
 \le  \frac{4\cdot 14}{C-1} + \frac{4\cdot3}{C-2} + \frac{4\cdot9}{C-3} + \frac{4\cdot5}{C-4} + \frac{4\cdot4}{C-5} + \cdot \frac{4\cdot4}{C-6}  
.
\]
The last upper bound together with (\ref{eq:rbcherrybound}) and our choice of the constant $C$ imply that $f(G)$ is rainbow with a non-zero probability.
\end{proof}


\section{Lower bounds}
\label{sec:lbounds}
In this section, we present three constructions of bounded colorings $c$
and graphs $G$ with either small number of cherries or small maximum degree, 
which provides the matching lower bounds for Theorems~\ref{thm:shearer},~\ref{thm:bkp:1},~\ref{thm:bkp:2} and~\ref{thm:rbshearer}.
We start with constructing an edge-coloring of $K_n$ that does not contain properly colored spanning trees
of radius two.
\begin{lemma}
\label{lem:rad2tree}
For every integer $n$,
there exists a locally $3$-bounded edge-coloring $c$ of $K_{3n}$ such that
$c$ contains no properly edge-colored spanning tree of radius two.
Moreover, the coloring $c$ is globally $9$-bounded.
\end{lemma}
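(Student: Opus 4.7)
The plan is to build $c$ from an arbitrary partition of the vertex set into triples, assigning one color to each intra-triple edge set and one color to each inter-triple edge set. Fix a partition $V(K_{3n}) = V_1 \sqcup V_2 \sqcup \cdots \sqcup V_n$ with $|V_i| = 3$ for every $i$, introduce fresh colors $d_1,\dots,d_n$ together with $c_{ij}$ for every $1 \le i < j \le n$, and set $c(e)=d_i$ whenever $e$ has both endpoints in $V_i$, and $c(e)=c_{ij}$ whenever $e$ has one endpoint in $V_i$ and the other in $V_j$. For any $v \in V_i$, the color $d_i$ appears on exactly the $2$ edges from $v$ to $V_i\setminus\{v\}$, while each $c_{ij}$ appears on the $3$ edges from $v$ to $V_j$, so $c$ is locally $3$-bounded. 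Globally, $d_i$ is assigned to $3$ edges and $c_{ij}$ to $9$ edges, so $c$ is also globally $9$-bounded.

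Next I would argue that $c$ admits no properly colored spanning tree of radius two. Suppose for contradiction that $T$ is such a tree, centered at some vertex $v$; after relabeling indices I may assume $v \in V_1$. Let $C := N_T(v)$ be the first shell, so that every vertex of $V(K_{3n})\setminus(\{v\}\cup C)$ is a second-shell vertex of $T$ attached to some parent in $C$. The key observation is that at $v$ every color class of $c$ has size at least $2$, so the proper-coloring condition on the edges $vu$ with $u\in C$ immediately forces $|C \cap V_j| \le 1$ for every $j$. In particular $|V_1\setminus(\{v\}\cup C)| \ge 3 - 1 - 1 = 1$, so there is some vertex $w\in V_1\setminus\{v\}$ appearing as a second-shell vertex of $T$ with some parent $u \in C$.

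The final step is to rule out both possible locations of $u$. If $u \in V_1$, then both edges $vu$ and $uw$ lie inside $V_1$, so $c(vu) = c(uw) = d_1$; if instead $u \in V_j$ for some $j \ne 1$, then $vu$ and $uw$ are both $V_1\mathyp V_j$-edges, so $c(vu) = c(uw) = c_{1j}$. Either case yields two same-colored edges meeting at $u$, contradicting that $T$ is properly colored. The only nontrivial design choice is in the coloring itself; once every color class at the center has size at least $2$ and every color class between two triples carries the full $3$ edges from each endpoint, the third vertex of $V_1$ can never be placed in $T$ without forcing a color clash. I expect the main obstacle to be nothing more than deciding on the coloring above; the case analysis afterwards is short and entirely combinatorial.
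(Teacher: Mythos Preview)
Your proposal is correct and follows essentially the same approach as the paper: the coloring is identical (the paper uses the palette $[n]\cup\binom{[n]}{2}$ with color $\{i,j\}$ on $P_i$--$P_j$ edges, which is exactly your $d_i$/$c_{ij}$ scheme), and the contradiction argument is the same---locate a vertex of the center's own triple that must sit at distance two, and observe that the two edges of its path to the center share a color. The only cosmetic difference is that you phrase the first step as ``$|C\cap V_j|\le 1$ for every $j$'' whereas the paper works directly with the two non-center vertices of $P_1$; the logical content is identical.
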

\begin{proof}
Split arbitrarily the vertex-set $V(K_n)$ into $n$ disjoint parts $P_1,\dots,P_{n}$, each of size $3$.
The coloring $c$ uses a palette of colors $[n] \cup {[n] \choose 2}$ 
and two vertices $x \in P_i$ and $y \in P_j$, where $i \in [n]$ and $j \in [n]$, are colored with the color $\{i,j\}$.
Note that if $i=j$, the edge $xy$ has color $\{i\}$.
It follows that the coloring $c$ is locally $3$-bounded and globally $9$-bounded.

Fix a tree $T$ of radius two 
and let $u$ be a central vertex of $T$, i.e., a vertex that has distance at most $2$ from
every $u' \in V(T)$. Suppose for contradiction that $c$ is $T$-proper. Fix a properly colored
copy of $T$ and let $v \in V(K_n)$ be the vertex corresponding to $u$. Without loss of generality,
$v \in P_1$. Let $v_2 \in P_1$ and $v_3 \in P_1$ be the other two vertices from the part
$P_1$, and $u_2 \in V(T)$ and $u_3 \in V(T)$ their corresponding vertices in $T$.
Since $T$ is properly colored, at least one of $u_2$ and $u_3$ is at distance two from $u$.
Without loss of generality, $u$ and $u_2$ have distance two in $T$, and let
$u_4$ be their (unique) common neighbor.
But then $c(vv_4)=c(v_2v_4)$,  where $v_4\in V(K_n)$ is the corresponding vertex to $u_4$, a contradiction.
\end{proof}

For an integer $m$, let $T_m$ be a tree of radius two with exactly one vertex of
degree $m^{2/3}$ that has all the neighbors of degree $m^{1/3}+1$ and they
have all their other neighbors of degree one.
Note that $T_m$ has $n := m + m^{2/3} + 1$ vertices and contains 
$\binom{m^{2/3}}2 +m^{2/3} \cdot m^{1/3} + m^{2/3} \cdot \binom{m^{1/3}}2 = m^{4/3} + (m-m^{2/3})/2 = \Theta(n^{4/3})$
cherries. Applying the previous lemma to $T_m$, we conclude
that the upper bounds on $k$ in Theorems~\ref{thm:shearer} and~\ref{thm:rbshearer} are, up to a constant factor,
best possible even when we restrict the graphs $G$ only to be trees.
\begin{corollary}
For every integer $n$, there exist an $n$-vertex tree $T$ with $\Theta(n^{4/3})$ cherries
and a locally $3$-bounded coloring $c$ of $K_n$ such that $c$ is not
$T$-proper.  Moreover, the coloring $c$ is globally $9$-bounded.
\end{corollary}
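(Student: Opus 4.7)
The plan is to combine Lemma~\ref{lem:rad2tree} with the tree $T_m$ defined in the paragraph just above the corollary. For the tree, I would let $m$ be the largest integer with $m+m^{2/3}+1\le n$ and define $T$ to be $T_m$ together with $n-(m+m^{2/3}+1)=O(n^{2/3})$ additional pendant leaves attached to middle-layer vertices of $T_m$ (so as to preserve radius~$2$). Then $T$ has exactly $n$ vertices and its cherry count is $\Theta(n^{4/3})$: the $T_m$-part already contributes $\Theta(n^{4/3})$ by the computation immediately preceding the corollary, and the padding changes the degree of only a small number of middle-layer vertices by at most $O(n^{2/3})$, adding at most $O(n^{4/3})$ further cherries, which does not change the asymptotic order.

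For the coloring, when $n$ is divisible by~$3$ I would take $c$ to be the coloring of $K_n$ produced by Lemma~\ref{lem:rad2tree} with parameter $n/3$: it is locally $3$-bounded, globally $9$-bounded, and contains no properly edge-colored spanning tree of radius~$2$. Since $T$ is such a spanning tree, it follows immediately that $c$ is not $T$-proper, which is the desired conclusion. When $n$ is not divisible by~$3$, I would set $k:=\lfloor n/3\rfloor$, take the lemma's coloring $c_0$ of $K_{3k}$, and extend it to $K_n$ by introducing the one or two missing vertices and coloring all their incident edges with fresh, globally-unique colors. Because each new color is used only once, the resulting coloring $c$ remains locally $3$-bounded and globally $9$-bounded.

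The main obstacle is to verify that $c$ is still not $T$-proper in the non-divisibility case: if some vertex of $T$ is mapped to an extra vertex $w$, the argument of Lemma~\ref{lem:rad2tree} fails because the ``part'' containing $w$ has fewer than three vertices. A clean way around this is to observe that the corollary is an asymptotic statement, so it suffices to provide the construction along the infinite subsequence $n\in 3\mathbb{Z}$, which already establishes the $\Theta(n^{4/3})$ tightness claimed for Theorems~\ref{thm:shearer} and~\ref{thm:rbshearer}. If one insists on every $n$, a more delicate version of the extension---pairing the extra vertex with two vertices of an existing part to form a ``virtual'' part of size~$3$ by reassigning the appropriate edge colors---restores the lemma's argument; verifying that this reassignment preserves local $3$-boundedness and still blocks every radius-$2$ spanning tree is the only technical step.
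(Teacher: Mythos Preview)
Your proposal is correct and follows essentially the same approach as the paper: combine the radius-two tree $T_m$ with the coloring from Lemma~\ref{lem:rad2tree}. The paper's own argument is just the single sentence preceding the corollary and, in keeping with its stated convention of ignoring floors and ceilings, does not address the integrality and divisibility issues you carefully work through; your extra care is appropriate but does not constitute a different method.
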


Next, consider a tree $T'_m$ of radius two with one vertex of degree $\sqrt{m}$, all its
neighbors of degree $\sqrt{m}$ and all their other neighbors of degree one. It follows
that $T'_m$ has $m+1$ vertices and maximum degree $\sqrt{m}$.
Lemma~\ref{lem:rad2tree} implies that both Theorems~\ref{thm:bkp:1}
and~\ref{thm:bkp:2} are tight in the regime $\Delta(G) = \Theta(\sqrt{n})$.

Now we present a similar type of coloring to the one from
Lemma~\ref{lem:rad2tree} which will not contain any properly colored graph of
diameter two. We will then use it to show that Theorem~\ref{thm:bkp:1} is in fact
tight, again up to a constant factor, for all values of $n$ and $\Delta$. Even
more, in this case we do not need $G$ to be spanning.  In fact $G$ can be of a
fixed order completely independent on $n$ (more precisely, our graphs $G$ will
be only of order $\Theta\left(\Delta^2\right)$).
Let us start with the following auxiliary lemma.
\begin{lemma}
\label{lem:diam2prop}
For a fixed integer $\ell \ge 3$, there exists a locally $(3n/\ell)$-bounded edge-coloring of $K_n$
such that $c$ contains no properly colored $\ell$-vertex graph of diameter two.
\end{lemma}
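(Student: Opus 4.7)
My plan is to extend the coloring from Lemma~\ref{lem:rad2tree} directly. Partition $V(K_n) = [n]$ into $t := \lceil \ell/3 \rceil$ parts $P_1, \dots, P_t$ of nearly equal sizes, so that each $|P_i| \le 3n/\ell$. Using the palette $[t] \cup \binom{[t]}{2}$, color every edge $xy$ with $x \in P_i$ and $y \in P_j$ by $\{i, j\}$ (taken as the singleton $\{i\}$ when $i = j$). At a vertex $v \in P_i$, a color $\{i, j\}$ with $j \ne i$ appears $|P_j|$ times and the color $\{i\}$ appears $|P_i| - 1$ times, so $c$ is locally $(3n/\ell)$-bounded.

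The main argument mimics the one in Lemma~\ref{lem:rad2tree}. Suppose for contradiction that $f \colon V(H) \to V(K_n)$ is a properly colored embedding of an $\ell$-vertex graph $H$ of diameter two. Since $t < \ell/2$, pigeonhole places three vertices $v_1 = f(u_1)$, $v_2 = f(u_2)$, $v_3 = f(u_3)$ of the image into a single part, say $P_i$. Any edge of $H$ among $u_1, u_2, u_3$ would map to an edge of color $\{i\}$, but these three potential image edges form a triangle, any two of which share a vertex; hence at most one of the pairs $(u_s, u_t)$ can lie in $E(H)$. In particular, there exist two pairs, say $\{u_1, u_2\}$ and $\{u_1, u_3\}$, that are non-adjacent in $H$ and therefore, by the diameter-two assumption, possess common neighbors in $H$.

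Picking any common neighbor $u$ of $u_1$ and $u_2$ (possibly $u = u_3$) and setting $v := f(u)$, both edges $vv_1$ and $vv_2$ of $f(H)$ receive the same color $\{i, j\}$, where $j$ is the index of the part containing $v$ (read as $\{i\}$ if $v \in P_i$). This is a color repetition at $v$, contradicting the properness of the copy.

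The main subtlety is balancing the parameter $t$: local boundedness wants $t \ge \ell/3$, while the pigeonhole step wants $t < \ell/2$, and the two constraints are jointly satisfiable for every $\ell \ge 3$ except the single corner case $\ell = 4$, which requires a separate ad-hoc treatment (for instance, starting from two equal parts and recoloring a perfect matching between them with a fresh color to destroy the residual properly colored $C_4$).
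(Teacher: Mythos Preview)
Your main argument is the same as the paper's: both use the blow-up coloring into $\lceil \ell/3 \rceil$ parts, apply pigeonhole to land three image vertices in a single part, and then exploit the diameter-two hypothesis to force a monochromatic cherry through a common neighbor. Your observation that at most one edge can appear among the three same-part vertices neatly absorbs the paper's separate triangle case. (The parenthetical ``possibly $u=u_3$'' is actually impossible under your own setup, since you declared $\{u_1,u_3\}$ a non-edge; but this is harmless.)

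The one genuine gap is your $\ell=4$ patch. With two equal parts $P_1,P_2$ and a perfect matching $M$ between them recolored with a fresh color~$3$, embed $C_4$ as $v_1,v_2\in P_1$ and $v_3,v_4\in P_2$ with cycle edges $v_1v_2,\,v_2v_3,\,v_3v_4,\,v_4v_1$. Then $c(v_1v_2)=\{1\}$, $c(v_3v_4)=\{2\}$, and each of $c(v_2v_3),\,c(v_4v_1)$ lies in $\{\{1,2\},3\}$; in every one of the four cases the cycle is properly colored, so the recoloring does not destroy this copy. You are right that $\ell=4$ is a real corner case---the paper simply suppresses floors and ceilings (as announced at the start of Section~\ref{sec:proof}) and only ever applies the lemma with $\ell=q^2+q+1\ge 7$---but the specific fix you propose does not work.
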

\begin{proof}
Split the vertex-set $V(K_n)$ into $n$ parts $P_1,\dots,P_{\ell/3}$, each of
size $3n/\ell$.  Analogously to the proof of Lemma~\ref{lem:rad2tree}, the coloring $c$
uses a palette of colors $[\ell/3] \cup {[\ell/3] \choose 2}$ and two vertices
$x\in P_i$ and $y \in P_j$ are colored with the color $\{i,j\}$.  It
holds that $c$ is locally $(3n/\ell)$-bounded.

Now let $G$ be an $\ell$-vertex graph of diameter two, and
suppose $c$ contains a properly colored copy of $G$.
By the pigeonhole principle, at least one of the parts $P_i \subseteq V(K_n)$ contains
at least three vertices of $G$. Let $u_1,u_2,u_3 \in V(G)$ be those vertices.
If there is a pair of vertices from $\{u_1,u_2,u_3\}$ that does not span an edge in $G$, then there is
no part $P_j$ for its common neighbor so that we avoid having a monochromatic path on three vertices in $c$.
But that means $\{u_1,u_2,u_3\}$ must be a triangle in $G$. Since $c(v_1v_2)=c(v_2v_3)=c(v_3v_1)$,
we conclude that $c$ does not contain a properly edge-colored copy of~$G$.
\end{proof}

We are now ready to prove Proposition~\ref{prop:constr1}.
\begin{proof}[Proof of Proposition~\ref{prop:constr1}]
Let $PG(2,q)$ be a projective plane of order $q$,
and let $G_{q}$ be the orthogonal polarity graph of $PG(2,q)$, which was introduced by Erd\H{o}s and R\'enyi in~\cite{bib:ERpolarity}.
Specifically, the vertex set of $G_q$ is the set of all points of $PG(2,q)$, where two distinct vertices $(x_1,x_2,x_3)$
and $(y_1,y_2,y_3)$ are adjacent if and only if $x_1y_1+x_2y_2+x_3y_3=0$.
It follows that $G_q$ has $\ell:=q^2 + q + 1$ vertices, maximum degree $\Delta:=q+1$, and diameter two.
Note that $\sqrt{\ell} \le \Delta \le \sqrt{1.3\ell}$.
It follows that the edge-coloring of $K_n$ from Lemma~\ref{lem:diam2prop}
is not $G_q$-proper.
\end{proof}

We finish this section with a construction of a coloring suitable for showing that also Theorem~\ref{thm:bkp:2} 
is tight, up to a constant factor, for any choice of $\Delta$.
We start with an analogue of Lemma~\ref{lem:diam2prop}.
\begin{lemma}
\label{lem:diam2rb}
Fix integers $\ell > 0$ and $n \ge 4\ell$, and let $G$ be a graph of diameter two with the additional
property that for every $v \in V(G)$, the neighborhood of $v$ does not contain an independent set of size $3$.
Then there exists a globally $(4\ell)$-bounded coloring of $K_n$
such that any rainbow copy of $G$ in $c$ contains at most $3$ vertices from
the set $\{1,\dots,4\ell\} \subseteq V(K_n)$.
\end{lemma}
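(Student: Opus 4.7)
The plan is to adapt the construction of Lemma~\ref{lem:diam2prop} (itself inspired by Lemma~\ref{lem:rad2tree}) to the rainbow, globally bounded setting. Partition $U := \{1,\dots,4\ell\}$ into four parts $P_1,P_2,P_3,P_4$ of size $\ell$ each. Within $U$ I want to color edges by the unordered pair of parts containing their endpoints, in the spirit of the previous lemmas; since a naive part-pair color class would have size up to $\ell^2$, I refine each bipartite block $P_i \times P_j$ and each clique on $P_i$ by a matching decomposition, giving every matching its own color, so no within-$U$ color is used more than $\ell$ times. For edges between $U$ and $V(K_n)\setminus U$, for each external vertex $w$ and each part $P_a$, I color all $\ell$ edges from $w$ to $P_a$ with a common color indexed by the pair $(w,a)$. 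Inside $V(K_n)\setminus U$, I use fresh colors, one per edge. Every color class has size at most $\ell \le 4\ell$, so the coloring is globally $(4\ell)$-bounded.

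Now suppose for contradiction that a rainbow copy $f$ of $G$ satisfies $|f(V(G))\cap U|\ge 4$, and choose four vertices $v_1,\dots,v_4 \in U\cap f(V(G))$ with preimages $u_1,\dots,u_4 \in V(G)$. By pigeonhole, either two of the $v_i$ lie in the same part $P_a$, or the four vertices sit one in each part. In the first case (say $v_1,v_2\in P_a$), the hypothesis $\alpha(N(w))\le 2$ for all $w\in V(G)$ rules out $K_{1,3}$ as an induced subgraph of $G$ on $\{u_1,\dots,u_4\}$, and a short case analysis of the remaining 4-vertex induced subgraphs, combined with diameter $2$, shows that some pair among $\{u_1,\dots,u_4\}$ which can be moved (by relabeling if necessary) to the role of $(u_1,u_2)$ has a common neighbor $u'\notin\{u_1,\dots,u_4\}$. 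Then $f(u')$ lies in $V(K_n)\setminus U$, so the edges $v_1 f(u')$ and $v_2 f(u')$ both receive the color indexed by $(f(u'),a)$, contradicting rainbow.

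The main obstacle will be the $1{+}1{+}1{+}1$ distribution, where no pair of the $v_i$ is in the same part and the common-neighbor-to-outside trick above produces edges with different colors (coming from different parts). Here one has to exploit the within-$U$ coloring: using $\alpha(N(w))\le 2$ together with diameter~$2$, show that $G[\{u_1,\dots,u_4\}]$ contains enough edges (certainly more than the $\binom{4}{2}=6$ distinct part-pairs, once one accounts for matching refinements) that for any coherent choice of matching decompositions on the bipartite blocks $P_i\times P_j$, two edges of the copy must lie in a common matching class and hence share a color. Making this matching decomposition align with the structural constraints imposed by $G$'s neighborhood condition---so that the forced collision always appears---is the crux of the argument, and is where I expect the technical difficulty to concentrate.
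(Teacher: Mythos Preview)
Your coloring does not have the claimed property. Take $G=C_4$, which has diameter two and every neighborhood an independent set of size~$2$. Pick one vertex $v_i\in P_i$ for each $i\in\{1,2,3,4\}$ and embed $C_4$ as $v_1v_2v_3v_4v_1$. The four edges lie in four distinct part-pairs, hence receive four distinct colors regardless of the matching refinement; this is a rainbow copy of $G$ with all four vertices in $U$. So the $1{+}1{+}1{+}1$ case is not a technical obstacle to be overcome but an outright failure of the construction: refining each block by a matching decomposition turns the within-$U$ coloring into a \emph{proper} edge-coloring, and a proper coloring is rainbow on any four vertices spread across the four parts. No choice of matching decompositions can create a collision here, because the six edges sit in six different part-pairs.

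Your Case~1 argument also has a gap independent of the above. You assert that the common neighbor $u'$ satisfies $f(u')\notin U$, but nothing forces this: you only selected four of the (possibly many) vertices of the copy that lie in $U$, so $f(u')$ may well be in $U$. And if $f(u')\in U$, the two edges $v_1f(u')$ and $v_2f(u')$ share the endpoint $f(u')$ and therefore lie in different matchings of the relevant block, so they get different colors and there is no contradiction.

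The paper's construction is quite different and crucially \emph{asymmetric}. On $X=\{1,\dots,4\ell\}$ it colors $v_iv_j$ by $\min(v_i,v_j)$, and colors an edge from $x\in X$ to $w\notin X$ by $w$. This forces two things simultaneously: any common neighbor of two $X$-vertices in a rainbow copy must itself lie in $X$ (otherwise both edges to it receive color $w$), and the smallest $X$-vertex $v_1$ in the copy can be adjacent to at most one of $v_2,\dots,v_z$. These two constraints, combined with diameter~$2$ and the neighborhood condition, pin down the common neighbor of $u_1$ with each $u_j$ exactly and drive a short case analysis to a contradiction. The total order on $X$, rather than a partition, is what makes the argument work.
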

\begin{proof}
Let $X := \{1,\dots,4\ell\}$. It will be enough to describe just the colors
of the edges with at least one endpoint in $X$ (the coloring of the subgraph induced by $V(K_n) \setminus X$
can be, for instance, rainbow using only colors that are disjoint from those we use in the rest of this paragraph).
The set of colors we use for edges that touch $X$ will be $[n]$.
If both $v_1 \in X$ and $v_2 \in X$, then we color $v_1v_2$ with
$\min(v_1,v_2)$. In other words, $c$ is a lexicographic coloring on the set
$X$. On the other hand, if $v_1 \in X$ and $v_2 \in V(K_n)\setminus X$, the
color of $v_1v_2$ will be $v_2$.

Suppose there is a rainbow copy of $G$ that contains $z \ge 4$ vertices from the set $X$.
Let $v_1 < v_2< \dots < v_z$ be these vertices, and let $u_1,\dots,u_z$ be their corresponding
vertices in $G$. For convenience, we also write $u_i < u_j$ if $1 \le i < j \le z$.
It follows from the definition of $c$ that at most one of the pairs $u_1u_2$ and $u_1u_3$ can form an edge of $G$.

First consider the case $u_1u_2 \in E(G)$. Let $u$ be a common neighbor of
$u_1$ and $u_3$,  and $v$ the vertex in $K_n$ corresponding to $u$. It follows that $v$
must be in $X$ (as otherwise $c(vv_1)=c(vv_3)=v$). Even more, $v$ actually must
be $v_2$ (otherwise $c(v_1v_2)=c(v_1v)$). Translated back to $G$, we conclude that $u=u_2$.
The same reasoning applied to $u_1$ and $u_4$ yields that $u_2$ is
also their common neighbor. But this is impossible, since $c(v_2v_3) =
c(v_2v_4)$.

Now suppose $u_1u_3 \in E(G)$ (and hence $u_1u_2 \notin E(G)$). Then the only common neighbor of $u_1$ and $u_2$ can be $u_3$
and, analogously, the only neighbor of $u_1$ and $u_4$ can be $u_3$. But that means
that all the vertices $u_1,u_2$ and $u_4$ are neighbors of $u_3$, hence at least one of
the three pairs from $\{u_1,u_2,u_4\}$ is an edge of $G$. Let $uu'$ such that $u<u'$ be
one such edge, and let $v\in V(K_n)$ and $v' \in V(K_n)$ be the vertices corresponding to $u$ and $u'$, respectively.
Since $v <v_3$, it follows that $c(vv') = c(vv_3) = v$, a contradiction.

Finally, consider the case when $u_1u_2 \notin E(G)$ and $u_1u_3 \notin E(G)$. Let $u\in V(G)$ be a common neighbor of $u_1$ and $u_2$
and $v$ its corresponding vertex in $K_n$. Note that $v > v_3$.
By the same reasoning as in the previous two paragraphs, $v \in X$, and $u$ is also a common neighbor of $u_1$ and $u_3$.
But then the vertices $u_1, u_2$ and $u_3$ are all neighbors of $u$ and hence they must span at least one edge in $G$.
It follows that this edge must be $u_2u_3$. But since $c(u_2u_3)=c(u_2u)=u_2$, the proof of the lemma is finished.
\end{proof}

For an integer $m$, let $H_m$ be an $m^2$-vertex graph with the
vertex set $[m] \times [m]$, where two vertices $(i,j)$ and $(i',j')$ are
adjacent if and only if $i=i'$ or $j=j'$. $H_m$ has 
maximum degree $2m-2$, diameter two, and for each $v \in V(H_m)$, the
neighborhood of $v$ induces a subgraph with independence number at most $2$.
We conclude the section by applying Lemma~\ref{lem:diam2rb} to $n$-vertex graphs that are
disjoint unions of $n/m^2$ copies of $H_m$.
\begin{proof}[Proof of Proposition~\ref{prop:constr2}]
Let $m:=\Delta/2+1$ and $G$ be an $n$-vertex graph consisting of $\ell := n/m^2$ disjoint copies of $H_m$.
Note that the maximum degree of $G$ is $\Delta$.
Next, let $c$ be the globally $\left({16n}/{\Delta^2}\right)$-bounded coloring from Lemma~\ref{lem:diam2rb} applied with $n$ and $\ell$.

Suppose $c$ is $G$-rainbow.
By the pigeonhole principle, at least one of the $\ell$ copies of $H_m$ must contain at
least $4$ vertices from the set $X:=\{1,\dots,4\ell\}$. However, Lemma~\ref{lem:diam2rb}
implies that each rainbow copy of $H_m$ can intersect $X$ in at most $3$ vertices, a contradiction.
\end{proof}

\section{Concluding remarks}
\label{sec:remarks}
In this paper we showed that any locally $k$-bounded
edge-coloring of $K_n$ with constant $k$ is $G$-proper for all $n$-vertex graphs $G$ with at most $O(n^{4/3})$ cherries.
In particular, this confirms an old conjecture of Shearer. Moreover, the bound $\Theta\left(n^{4/3}\right)$ is best
possible, even if we restrict our attention only to trees.
More generally, we proved that if $G$ is an $n$-vertex graph with $r$ cherries, 
any locally $k$-bounded edge-coloring of $K_n$ is $G$-proper for $k=O\left(\frac n{r^{3/4}}\right)$.
However, we do not know whether the dependency $k=O\left(\frac n{r^{3/4}}\right)$
for graphs $G$ with $r \ll n^{4/3}$ cherries is optimal.
Similarly, is the same dependency best possible for finding a rainbow copy of $G$
in globally $k$-bounded edge-colorings of $K_n$?

We have also observed that the dependency $k=O\left(n/\Delta^2\right)$ in Theorems~\ref{thm:bkp:1} and~\ref{thm:bkp:2} cannot be 
further improved, even in the case when $G$ is a spanning tree, e.g, consider the $\sqrt n$-ary tree of radius two. However, a simple greedy embedding 
together with the fact that trees are $1$-degenerate shows
that if $G$ is a tree on $(1-\eps)n$ vertices with maximum degree $\Delta$ and $k=\eps n/\Delta$,
then any locally $k$-bounded coloring of $E(K_n)$ is $G$-proper. This leads to a natural question
whether the bound $k=O(n/\Delta^2)$ can be improved for spanning trees with maximum degree $\Delta \ll \sqrt{n}$.

Finally, for any graph $G$ with maximum degree $\Delta$, the proofs of Theorems~\ref{thm:bkp:1} and~\ref{thm:bkp:2}
hold (with slightly worse constants in the upper bounds on~$k$) even if we replace the graph 
$K_n$ by a graph $K$ with minimum degree at least $n - O\left(\frac{n}{\Delta(G)}\right)$.
This follows simply by adding to the set of bad events in the application of local lemma 
those events, that take care of mapping an edge of $G$ to a non-edge of $K$. The corresponding
proofs are then modified analogously to the modification of the proof of Theorem~\ref{thm:shearer} in order to
establish Theorem~\ref{thm:rbshearer}.
Therefore, if $c$ is a locally (globally) bounded coloring of the edges of $K_n$ as stated in
Theorem~\ref{thm:bkp:1} (Theorem~\ref{thm:bkp:2}), we can find, by iteratively applying the previous
claim, $\Theta\left(\frac{n}{\Delta^2}\right)$ properly colored (rainbow) edge-disjoint copies
of $G$ in $c$ instead of just one.
Similarly, the proofs of Theorems~\ref{thm:shearer} and~\ref{thm:rbshearer} can be used to find 
properly colored and rainbow copies of a graph with $r$ cherries in bounded
colorings of graphs with large minimum degree.

\vspace{0.3cm}

{\bf Acknowledgments.}\,  The authors would like to thank Nina Kam\v{c}ev and
Michael Krivelevich for fruitful discussions on topics related to this project,
and the anonymous referees for their valuable comments, which greatly improved
the presentation of the results.

{


\begin{thebibliography}{99}
\bibitem{bib:afr}
{\sc M.~Albert, A.~Frieze, B.~Reed},
{\em Multicoloured Hamilton cycles},
Electron. J. Comb. 2 (1995), Research paper R10.

\bibitem{bib:aghh}
{\sc B.~Alspach, M.~Gerson, G.~Hahn, P.~Hell},
{\em On sub-Ramsey numbers},
Ars Combin. 22 (1986), 199--206.

\bibitem{bib:alongutin}
{\sc N.~Alon, G. Gutin},
{\em Properly colored Hamilton cycles in edge-colored complete graphs},
Random Structures Algorithms 11(2) (1997), 179--186.

\bibitem{bib:alonjiang}
{\sc N.~Alon, T.~Jiang, Z.~Miller, D.~Pritikin},
{\em Properly colored subgraphs and rainbow subgraphs in edge-colorings
with local constraints},
Random Structures Algorithms 23(4) (2003), 409--433.

\bibitem{bib:bolloberdos}
{\sc B.~Bollob\'as, P. Erd\H{o}s},
{\em Alternating Hamiltonian cycles},
Isr. J. Math. 23 (1976), 126--131.

\bibitem{bib:bkp}
{\sc J.~B\"ottcher, Y.~Kohayakawa, A.~Procacci},
{\em Properly coloured copies and rainbow copies of large graphs with small maximum degree},
Random Structures Algorithms 40(4) (2012), 425--436.

\bibitem{bib:chendayk}
{\sc C.~C.~Chen, D.~E.~Daykin},
{\em Graphs with Hamiltonian cycles having adjacent lines different colors},
J. Combin. Theory Ser. B 21(2) (1976), 135--139.

\bibitem{bib:lll}
{\sc P.~Erd\H{o}s, L.~Lov\'asz},
{\em Problems and results on 3-chromatic hypergraphs and some related questions},
Infinite and finite sets 10(2) (1975), 609--627.


\bibitem{bib:erdosrado}
{\sc P.~Erd\H{o}s, R.~Rado},
{\em A combinatorial theorem},
J. London Math. Soc. 25 (1950), 249--255.

\bibitem{bib:llll}
{\sc P.~Erd\H{o}s, J.~Spencer},
{\em Lopsided Lov\'asz local lemma and latin transversals},
Discrete Appl. Math. 30 (1991), 151--154.


\bibitem{bib:ERpolarity}
{\sc P.~Erd{\H{o}}s and A.~R{\'e}nyi},
{\em On a problem in the theory of graphs},
Magyar Tud. Akad. Mat. Kutat\'o Int. K\"ozl. 7 (1962), 623--641.

\bibitem{bib:friezekriv}
{\sc A.~Frieze, M.~Krivelevich},
{\em On rainbow trees and cycles},
Electron. J. Comb. 15(1) (2008), Research paper R59.


\bibitem{bib:galvin}
{\sc F.~Galvin},
{\em Advanced Problem number 6034},
Amer. Math. Monthly 82 (1975), 529.

\bibitem{bib:hahnthomassen}
{\sc G.~Hahn, C.~Thomassen},
{\em Path and cycle sub-Ramsey numbers and an edge-colouring conjecture},
Discrete Math. 62(1) (1986), 29--33.

\bibitem{bib:harveyvondrak}
{\sc N.~Harvey, J.~Vondr\'ak},
{\em An Algorithmic Proof of the Lov\'asz Local Lemma via resampling oracles},
Proc.\ 56th Symposium on Foundations of Computer Science (FOCS 2015)
1327--1346.


\bibitem{bib:hellmb}
{\sc P.~Hell, J.~J.~Montellano-Ballesteros},
{\em Polychromatic cliques},
Discrete Math. 285 (2004), 319--322.

\bibitem{bib:lrw}
{\sc H.~Lefmann, V.~R\"odl, B.~Wysocka},
{\em Multicolored Subsets in Colored Hypergraphs},
J. Combin. Theory Ser. A 74 (1996), 209--248.

\bibitem{bib:lo}
{\sc A.~Lo},
{\em Properly coloured Hamiltonian cycles in edge-coloured complete graphs},
Combinatorica, forthcoming.

\bibitem{bib:luszek}
{
{\sc L.~Lu, L.~Sz\'ekely},
{\em Using Lov\'asz Local Lemma in the space of random injections},
Electron. J. Comb. 14(1) (2007), Research paper R63.
}

\bibitem{bib:mohr}
{\sc A.~Mohr},
{\em Applications of the Lopsided Lov\'asz Local Lemma Regarding Hypergraphs},
Ph.D. Thesis, University of South Carolina, 2013.

\bibitem{bib:molloyreed}
{\sc M.~Molloy, B.~Reed},
{\em Graph Colouring and the Probabilistic Method},
Algorithms and Combinatorics 23, Springer-Verlag, Berlin, 2002.

\bibitem{bib:rue}
R.~Rue, Comment on~\cite{bib:afr},
Electron. J. Comb. 2 (1995), Comment on R10.

\bibitem{bib:shearer}
{\sc J.~B.~Shearer},
{\em A property of the colored complete graph},
Discrete Math. 25 (1979), 175--178.

\end{thebibliography}
\end{document}